\numberwithin{equation}{section}
\theoremstyle{plain}
\newtheorem{theorem}{Theorem}[section]
\newtheorem{lemma}[theorem]{Lemma}
\newtheorem{proposition}[theorem]{Proposition}
\theoremstyle{definition}
\newtheorem{definition}[theorem]{Definition}
\newtheorem{remark}[theorem]{Remark}
\newcommand{\F}{\mathcal{F}}
\newcommand{\N}{\mathbb{N}}
\newcommand{\R}{\mathbb{R}}
\newcommand{\ffi}{\varphi}
\newcommand{\ud}{\,\mathrm{d}}
\newcommand{\weakly}{\rightharpoonup}
\newcommand{\ep}{\varepsilon}
\newcommand{\cc}{\mathrm{c}}
\newcommand{\CC}{\mathrm{C}}
\newcommand{\B}{\mathcal B}
\newcommand{\h}{\mathscr{H}}
\definecolor{ddorange}{rgb}{1,0.5,0}
\definecolor{ddcyan}{rgb}{0,0.2,1.0}
\title[The variational approach to $s$-fractional heat flows] {The variational approach to $s$-fractional heat flows and the limit cases $s\to 0^+$ and $s\to 1^-$}
\author[V. Crismale]
{V. Crismale}
\address[Vito Crismale]{Dipartimento di Matematica ``Guido Castelnuovo'', Sapienza Universit\`a di Roma, Piazzale Aldo Moro 2, I-00185 Roma, Italy
}
\email[V. Crismale]{crismale@mat.uniroma1.it}
\author[L. De Luca]
{L. De Luca}
\address[Lucia De Luca]{IAC-CNR, Via dei Taurini 19, I-00185 Roma, Italy}
\email[L. De Luca]{lucia.deluca@cnr.it}
\author[A. Kubin]
{A. Kubin}
\address[Andrea Kubin]{Dipartimento di Matematica ``Guido Castelnuovo'', Sapienza Universit\`a di Roma, Piazzale Aldo Moro 2, I-00185 Roma, Italy
}
\email[A. Kubin]{kubin@mat.uniroma1.it}
\author[A. Ninno]
{A. Ninno}
\address[Angelo Ninno]{Dipartimento di Matematica ``Guido Castelnuovo'', Sapienza Universit\`a di Roma, Piazzale Aldo Moro 2, I-00185 Roma, Italy
}
\email[A. Ninno]{angelo.ninno@uniroma1.it}
\author[M. Ponsiglione]
{M. Ponsiglione}
\address[Marcello Ponsiglione]{Dipartimento di Matematica ``Guido Castelnuovo'', Sapienza Universit\`a di Roma, Piazzale Aldo Moro 2, I-00185 Roma, Italy
}
\email[M. Ponsiglione]{ponsigli@mat.uniroma1.it}
\begin{document}
 \maketitle

\begin{abstract}
This paper deals with the limit cases for $s$-fractional heat flows in a cylindrical domain, with  homogeneous Dirichlet boundary conditions, as $s\to 0^+$ and $s\to 1^-$\,. 

To this purpose, we describe the fractional heat flows as minimizing movements of the corresponding Gagliardo seminorms, with respect to the $L^2$ metric. First, we provide an abstract stability result for minimizing movements  in Hilbert spaces, with respect to a sequence of $\Gamma$-converging uniformly $\lambda$-convex  energy functionals. Then, we provide the $\Gamma$-convergence analysis of the $s$-Gagliardo seminorms as $s\to 0^+$ and $s\to 1^-$\,, and apply the general stability result to such specific cases. 

As a consequence, we prove that $s$-fractional heat flows (suitably scaled in time) converge to the standard heat flow as $s\to 1^-$, and to a degenerate ODE type flow as $s\to 0^+$\,. Moreover, looking at the next order term in the asymptotic expansion of the $s$-fractional Gagliardo seminorm,  we show that suitably forced $s$-fractional heat flows converge, as $s\to 0^+$\,,  to the parabolic flow of an energy functional that can be seen as a sort of renormalized $0$-Gagliardo seminorm: the resulting parabolic equation involves the first variation of such an energy, that can be understood as a zero (or logarithmic) Laplacian.    

\vskip5pt
\noindent
\textsc{Keywords:}  Gagliardo seminorms; $\Gamma$-convergence; Fractional heat flow
\vskip5pt
\noindent
\textsc{AMS subject classifications: }
%35D40   %Viscosity solutions to PDEs
49J45    %Methods involving semicontinuity and convergence; relaxation
%35K93   %Quasilinear parabolic equations with mean curvature operator
35R11   %	Fractional partial differential equations
%35Q74  %PDEs in connection with mechanics of deformable solids
%35B40   %Asymptotic behavior of solutions to PDEs
35K20  %Initial-boundary value problems for second-order parabolic equations
\end{abstract}
%%%%%%%%%%%%%%%%%%%%%%
%%%%%%%%%%%%%%%%%%%%%
%%%%%%%%%%%%%%%%%%%%%%
\setcounter{tocdepth}{1} 
\tableofcontents
%%%%%%%%%%%%%%%%%%%%%%%%%%%%%%%%%%%%%%
%%%%%%%%%%%%%%%%%%%%%%%%%%%%%%%%%%%%%%
%%%%%%%%%%%%%%%%%%%%%%%%%%%%%%%%%%%%%%
\section*{Introduction}
 In this paper we consider the $s$-fractional heat equation ($s\in (0,1)$)
\begin{equation}\label{equazero}
u_t(t) + C(s) (-\Delta)^s u(t)=0\,,\qquad t\ge 0
\end{equation}
posed in a bounded set $\Omega\subset \R^d$ with homogeneous Dirichlet conditions, and we study its asymptotic behavior as $s\to 0^+$ and $s\to 1^-$.
We adopt a purely variational approach, combining the theory of \emph{minimizing movements} for \emph{gradient flows} and $\Gamma$-convergence properties for the underlying energies.

The fractional heat equation may be seen as the $L^2$-gradient flow of the squared $s$-Gagliardo seminorm
$$
[u]^2_{s}:=\int_{\R^d}\int_{\R^d}\frac{|u(x)-u(y)|^2}{|x-y|^{d+2s}}\ud y\ud x\,,
$$
with the support of $u$ contained in $\Omega$ when the equation is posed in a bounded domain with homogeneous Dirichlet boundary conditions.

The asymptotic behavior of $s$-Gagliardo seminorms has been studied by several authors.
 The case $s\to 1^-$ has been first considered in \cite{BBM1},  where it is proven that the pointwise limit of the squared $s$-Gagliardo seminorms multiplied by $(1-s)$ is given by (a multiple of) the Dirichlet integral.  Such  a result  is indeed  proven for every exponent $1<p<+\infty$  ($[\cdot]_s$ corresponds to $p=2$). For $p=1$ only a control of the limit in terms of the total variation is provided, allowing to characterize the BV space; this  has been extended in several directions: first, in \cite{D}, showing that the pointwise limit is exactly (a multiple of) the total variation, and then in \cite{P, LS} for more general kernels. The asymptotic behavior of (relative)
fractional perimeters (roughly said, considering characteristic functions in the framework of \cite{D}) is provided  in \cite{ADPM} in terms of $\Gamma$-convergence.

For what concerns the limit as $s\to 0^+$\,, the literature is much poorer. In \cite{MS} the authors show that, as $s\to 0^+$\,, the squared $s$-fractional Gagliardo seminorms multiplied by $s$ pointwise converge to (a multiple of) the squared $L^2$-norm
(see also \cite{DFPV} for a similar result in the context of $s$-fractional perimeters). 
The corresponding asymptotic analysis in terms of $\Gamma$-convergence has been developed in \cite{DNP} in the context of fractional perimeters (that is, restricting to characteristic functions). A functional with more interesting properties is obtained in the limit as $s\to 0^+$ by studying the next order term in the asymptotic expansion of the squared $s$-fractional Gagliardo seminorms: in \cite{DNP} it is shown, still restricting to fractional perimeters, that the corresponding $\Gamma$-limit provides a new nonlocal energy, referred to as {\it $0$-fractional perimeter}.

The first step in our analysis is to extend the results in \cite{DNP} to the seminorms. In fact, we remove the constraint on the admissible functions to be characteristic functions
and we obtain a $\Gamma$-convergence expansion (as $s\to 0^+$) with respect to the  $L^2$ topology for functions whose support is in a bounded set $\Omega$\,. 
The zero-order $\Gamma$-convergence result is Theorem~\ref{gammafs}, while 
the next order analysis is done in  Theorem~\ref{thm:stozerofo}.
The $\Gamma$-convergence analysis  
of the (scaled) $s$-Gagliardo seminorms as $s\to 1^-$ is done in Theorem~\ref{thmGammaconvs1}, giving back the $\Gamma$-convergence version of the result in \cite{BBM1}.

The above asymptotic results, which are of independent interest,
 serve as a tool for the stability of the corresponding parabolic flows.
Stability of gradient flows with respect to the $\Gamma$-convergence of the corresponding energies is nowadays a classical problem, which has been widely investigated in recent years in increasing generality (we refer, for instance, to \cite{SS, RS, AGS}). 
In the present paper we exploit the fact that the underlying energies are  uniformly (with respect to $s$)  $\lambda$-convex.

The gradient flows of $\lambda$-convex energies, namely energies which are convex up to a quadratic perturbation multiplied by $\lambda$, admits a unique solution. 
Moreover, such a solution can be approximated by the discrete-in-time implicit Euler scheme; namely, the solution coincides with the so called  minimizing movements solution.
This well-known fact is the content of Theorem~\ref{exist}, which provides also quantitative convergence estimates for   the discrete-in-time solutions to the unique solution of the parabolic flow, the convergence rates depending  essentially on $\lambda$ and on the initial data  (see also \cite{A, DS, NSV}). 
The uniformity of the convergence rates is the key tool for proving Theorem \ref{genstab}, which guarantees stability  for gradient flows in Hilbert spaces with respect to  sequences of $\Gamma$-converging uniformly $\lambda$-convex functionals.

With Theorems \ref{exist} and \ref{genstab} on hand, we are in a position to provide our stability results for the $s$-fractional heat flows as $s\to 0^+$ and $s\to 1^-$\,.

First, we apply Theorem~\ref{exist} in order to show that for every $s\in (0,1)$ there exists a unique solution to the Cauchy problem governed by the $s$-fractional parabolic flow \eqref{equazero} and that such a solution is the limit, as the time step vanishes, of the discrete-in-time evolution.
Actually, the parabolic flow in the abstract result Theorem \ref{exist} is written - as usual - in terms of a differential inclusion of $-u_t$ in the subdifferential of the underlying energy evaluated in $u$, which in general could be multivalued. This is not our case,  since the $s$-Gagliardo seminorms are differentiable in the fractional Sobolev spaces $\mathcal{H}^s_0(\Omega)$\,, which are dense in $L^2(\Omega)$\,; this implies (see  Proposition~\ref{singleton}) that along the parabolic flow, the subdifferential of the nonlocal energy reduces to a point for a.e. $t>0$\,. Actually existence, uniqueness and regularity for the fractional heat flows have been extensively studied; we refer the interested reader to \cite{CCV, FR} and the references therein.
Moreover, in the framework of nonnegative solutions for the fractional heat equation in $\R^d$, the problem has been studied in the context of a general Widder theory \cite{Wid44}, in e.g.\ \cite{BarPerSorVal13, BonSirVaz16, Vaz17}, with even not regular (but nonnegative) initial datum.

Finally, since the energies we consider are uniformly $\lambda$-convex, we can apply the abstract stability result Theorem \ref{genstab} to obtain the asymptotic behavior of the $s$-fractional heat flows as $s\to 0^+$ and $s\to 1^-$\,. 
 This is contained in our main results, Theorems~\ref{convheat0ordsto0}, \ref{convheat1ordsto0}, \ref{11genstab}: the limit evolutions are an ODE corresponding to an exponential growth for the $0$-th order as $s\to 0^+$, a $0$-fractional heat equation for the first order as $s\to 0^+$, and the classical local heat equation as $s\to 1^-$. The stability consists in a weak  $H^1$-in-time-convergence, which is proven to be strong if the initial data are well prepared, namely if the approximating initial data are a recovery sequence for the limit datum with respect to the $\Gamma$-converging energies. Furthermore, in this case for every time $t$ the approximating evolutions are recovery sequences (with respect to the $\Gamma$-converging energies) for the limit evolution,
 namely there is convergence of the corresponding energies for every $t$.

A result similar in spirit to our asymptotic analysis for $s\to 1^-$ is provided in \cite{AliAnsBraPiaTri20}, where the authors consider limit of convolution type energies and of the corresponding parabolic flows as the support of the convolution kernel concentrates to a point. In that paper the authors focus on homogeneous Neumann boundary conditions, and are able to deal also with anisotropic kernels; we believe that, up to minor differences in the setting, their approach and the one developed in this paper are consistent and that our analysis as $s\to 1^-$ could be derived by their analysis with minor modifications. As well, we believe that our results (also for $s\to 0^+$) could be generalized to natural variants of the fractional heat flows considered here, dealing for instance with different boundary conditions as well as anisotropic variants of the underlying Gagliardo seminorms. 

The paper is structured as follows: in Sections~\ref{Sec1} and \ref{Sec2} we study the $\Gamma$-convergence of the functionals as $s\to 0^+$ and $s\to 1^-$\,, respectively. Section~\ref{sec:general} collects the abstract stability result for gradient flows of uniformly $\lambda$-convex $\Gamma$-converging energies. In Section~\ref{Sec4} we prove the main results of convergence for $s$-fractional parabolic flows as $s\to 0^+$ and $s\to 1^-$. 
%%%%%%%%%%%%%%%%%%%%%%%%%%%%%%
%%%%%%%%%%%%%%%%%%%%%%%%%%%%%%
%%%%%%%%%%%%%%%%%%%%%%%%%%%%%%
%%{\bf Notation:}
%Throughout the paper, $\Omega$ will be an open bounded domain with Lipschitz boundary, and we denote with $C=C(\star,\cdots,\star)$ a constant depending on $\star,\cdots , \star$, that may
%% this constant can 
%change in the steps of a proof. In Section~\ref{Sec4} we denote by $v_t$ the partial time derivative of a function $v$, whereas we use the notation $\dot{v}$ in Section \ref{sec:general}, which collect abstract results.
\vskip20pt
 
%%%%%%%%%%%%%%%%%%%%%%%%%%%%%%
%%%%%%%%%%%%%%%%%%%%%%%%%%%%%%
%%%%%%%%%%%%%%%%%%%%%%%%%%%%%%
\textsc{Acknoledgments:}  The authors are members of the Gruppo Nazionale per l'Analisi Matematica, la Probabilit\`a e le loro Applicazioni (GNAMPA) of the Istituto Nazionale di Alta Matematica (INdAM).
 \vskip20pt

%%%%%%%%%%%%%%%%%%%%%%%%%%%%%%%%%%%%%%%%%
%%%%%%%%%%%%%%%%%%%%%%%%%%%%%%%%%%%%%%%%%
%%%%%%%%%%%%%%%%%%%%%%%%%%%%%%%%%%%%%%%%% 
\section{$\Gamma$-convergence of the squared $s$-Gagliardo seminorms as $s\to 0^+$}\label{Sec1}
%%%%%%%%%%%%%%%%%%%%%%%%%%%%%%%%%%%%%%%%
%%%%%%%%%%%%%%%%%%%%%%%%%%%%%%%%%%%%%%%%
%%%%%%%%%%%%%%%%%%%%%%%%%%%%%%%%%%%%%%%% 
In this section we analyze the asymptotic behaviour of the scaled and renormalized $s$-Gagliardo seminorms as $s\to 0^+$.

Let  $d\in\N$, $d\geq 1$,  
and let $s\in(0,1)$\,. The Gagliardo $s$-seminorm  of a measurable function $u:\R^d\to\R$ is defined by
$$
[u]_{s}:=\bigg[\int_{\R^d}\int_{\R^d}\frac{|u(x)-u(y)|^2}{|x-y|^{d+2s}}\ud y\ud x\bigg]^{\frac{1}{2}}\,,
$$
whenever the double integral above is finite.
Let $\Omega$ be an open bounded subset of $\R^d$ with Lipschitz continuous boundary.
We denote by $\mathcal{H}^s_0(\Omega)$ the completion of $C^\infty_{\mathrm{c}}(\Omega)$ with respect to the Gagliardo $s$-seminorm defined above.
For every measurable function $u:\Omega\to\R$ we denote by $\tilde u$ its extension to $0$ on the whole $\R^d$\,, i.e., defined by $\tilde u=u$ in $\Omega$ and $\tilde u=0$ in $\R^d\setminus\Omega$\,.

In \cite[Theorem 2]{MS} it has been proven that there exists a constant $C(d)$ depending only on the dimension $d$, such that for  $d > 2s$   
\begin{equation*}
\int_{\Omega}\frac{|u(x)|^2}{|x|^{2s}}\ud x\le C(d)\frac{s(1-s)}{(d-2s)^2}[\tilde u]^2_{s} \,\quad\textrm{ for  every }u\in \mathcal{H}^s_0(\Omega)\,.
\end{equation*}
It follows that $\mathcal{H}^s_0(\Omega)\subset L^2(\Omega)$  for every $d\ge 1$  and every $s\in(0,1)$:  for $2s <d$ this comes from the above estimate, being $\Omega$ bounded; for $d\leq 2s$ it is enough to pass to suitable $s'< s$ with $2s' <d$, recalling that $[\tilde{u}]_{s_1}\leq C(d,s)[\tilde{u}]_{s_2}$
for $0<s_1\leq s_2<1$ 
(see e.g.\ \cite[Proposition~2.1]{DPV}). 

Along with \cite[Theorem~1.4.2.2]{Gris} (see also \cite[Theorem~3.29]{McLean}), 
the inclusion $\mathcal{H}^s_0(\Omega)\subset L^2(\Omega)$ gives that 
\begin{equation*}
\mathcal{H}^s_0(\Omega)=\left\{u\in L^2(\Omega)\,:\,[\tilde u]_s<+\infty\right\}\,.
\end{equation*}
%%%%%%%%%%%%%%%%%%%%%%%%%%%%%%
%%%%%%%%%%%%%%%%%%%%%%%%%%%%%%
%%%%%%%%%%%%%%%%%%%%%%%%%%%%%%
For every $s\in(0,1)$, we define the functional $F^{s}:L^2(\Omega)\to [0,+\infty]$ as 
\begin{equation}\label{defFs}
F^{s}(u):=[\tilde u]^2_s\,.
\end{equation}
\subsection{$0$-th order $\Gamma$-convergence for the functionals $F^{s}$ as $s\to 0^+$}
%%%%%%%%%%%%%%%%%%%%%%%%%%%%%%%%%%%%%%%%
%%%%%%%%%%%%%%%%%%%%%%%%%%%%%%%%%%%%%%%%
%%%%%%%%%%%%%%%%%%%%%%%%%%%%%%%%%%%%%%%%
We define the functional $F^{0}:L^2(\Omega)\to [0,+\infty)$ as
\begin{equation}\label{F0}
F^{0}(u):=\frac{d\omega_d}{2}\|u\|^2_{L^2},
\end{equation}
where $\omega_d$ is the measure of the unit ball of $\R^d$.
%%%%%%%%%%%%%%%%%%%%%%%%%%%%%%%%%%%%%%%%
%%%%%%%%%%%%%%%%%%%%%%%%%%%%%%%%%%%%%%%%
%%%%%%%%%%%%%%%%%%%%%%%%%%%%%%%%%%%%%%%%
The following result is a trivial consequence of \cite[formula (9)]{MS}.
%%%%%%%%%%%%%%%%%%%%%%%%%%%%%%%%%%%%%%%%
%%%%%%%%%%%%%%%%%%%%%%%%%%%%%%%%%%%%%%%%
%%%%%%%%%%%%%%%%%%%%%%%%%%%%%%%%%%%%%%%%
\begin{theorem}\label{thm:mazsha}
Let $\delta\in(0,1)$\,. For every $s\in\big(0,\frac{\delta^2}{8}\big)$ and for every $u\in \mathcal{H}^s_0(\Omega)$ we have 
\begin{equation}\label{eq:mazsha}
\frac{d\omega_d}{2}\int_{\Omega}\frac{|u(x)|^2}{|x|^{2s}}\ud x\le s \frac{2^{2s}}{(1-\delta)^2}F^{s}(u)\,.
\end{equation}
\end{theorem}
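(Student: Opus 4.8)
Since the asserted inequality is a quantitative form of the Maz'ya--Shaposhnikova fractional Hardy inequality with constants tracked carefully, and the authors flag it as a consequence of \cite[formula~(9)]{MS}, the plan is to deduce it from that estimate. Recall the mechanism behind such inequalities for the exponent $2$: setting $v:=\tilde u$ and writing $v=g\,h$ with $g(x):=|x|^{-\gamma}$, one has the algebraic identity
\[
\bigl(g(x)h(x)-g(y)h(y)\bigr)^2=g(x)g(y)\bigl(h(x)-h(y)\bigr)^2+h(x)^2g(x)\bigl(g(x)-g(y)\bigr)+h(y)^2g(y)\bigl(g(y)-g(x)\bigr).
\]
Dividing by $|x-y|^{d+2s}$, integrating over $\R^d\times\R^d$, using the $x\leftrightarrow y$ symmetry of the last two terms, discarding the first (nonnegative) one, and using that $\mathrm{p.v.}\!\int_{\R^d}\frac{g(x)-g(y)}{|x-y|^{d+2s}}\,\ud y$ equals $\Phi_{d,s}(\gamma)\,|x|^{-\gamma-2s}$ with $\Phi_{d,s}$ as below, one gets
\[
[v]_s^2\ \ge\ 2\,\Phi_{d,s}(\gamma)\int_{\R^d}\frac{|v(x)|^2}{|x|^{2s}}\,\ud x,\qquad \Phi_{d,s}(\gamma):=\mathrm{p.v.}\!\int_{\R^d}\frac{1-|z|^{-\gamma}}{|e_1-z|^{d+2s}}\,\ud z>0
\]
for a suitable choice of $\gamma$. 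This is \cite[formula~(9)]{MS} in substance; the constants $d\omega_d$ and $2^{2s}$ appearing in the statement are most transparent via the equivalent route based on the elementary identity $|x|^{-2s}=\frac{2s\,2^{2s}}{d\omega_d}\int_{\{y:\,|x-y|>2|x|\}}|x-y|^{-d-2s}\,\ud y$ (polar coordinates centered at $x$), followed by absorbing the cross term that arises on that region.

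Granting \cite[formula~(9)]{MS} in the form $[v]_s^2\ge K(d,s)\int_{\R^d}|v(x)|^2|x|^{-2s}\,\ud x$, with $K(d,s)$ the explicit constant it produces, two routine steps conclude. First, $\tilde u\equiv 0$ on $\R^d\setminus\Omega$, hence $\int_{\R^d}|\tilde u|^2|x|^{-2s}\,\ud x=\int_\Omega|u|^2|x|^{-2s}\,\ud x$ and $[\tilde u]_s^2=F^s(u)$, so the asserted inequality is equivalent to the purely numerical bound $\tfrac{d\omega_d}{2}\le\tfrac{s\,2^{2s}}{(1-\delta)^2}\,K(d,s)$. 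Second, \cite[formula~(9)]{MS} is proved for $v\in C^\infty_{\mathrm c}(\R^d)$; for a general $u\in\mathcal H^s_0(\Omega)$ one takes $u_k\in C^\infty_{\mathrm c}(\Omega)$ with $[\widetilde{u_k-u}]_s\to 0$, whence, by the continuous embedding $\mathcal H^s_0(\Omega)\hookrightarrow L^2(\Omega)$ recalled above, $u_k\to u$ in $L^2(\Omega)$ and, up to a subsequence, a.e.\ in $\Omega$, and one passes to the limit using Fatou's lemma on $\int_\Omega|u_k|^2|x|^{-2s}$ together with $F^s(u_k)=[\widetilde{u_k}]_s^2\to[\tilde u]_s^2=F^s(u)$.

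It thus only remains to verify the scalar inequality $K(d,s)\ge\frac{d\omega_d(1-\delta)^2}{2s\,2^{2s}}$ for every $0<s<\delta^2/8$, and this is the single step that requires computation. Writing $K(d,s)=\frac{d\omega_d}{2s\,2^{2s}}(1-\eta_s)^2$ up to a harmless factor $\ge 1$, with $\eta_s\to 0^+$ as $s\to 0^+$, the inequality becomes $(1-\eta_s)^2\ge(1-\delta)^2$, i.e.\ $\eta_s\le\delta$; and the constraint $s<\delta^2/8$ is precisely what forces this once $\eta_s$ is estimated explicitly — e.g.\ $\eta_s\le\sqrt{8s}$ (or $\eta_s\le 8s$, according to the exact normalization in \cite[formula~(9)]{MS}), so that $s<\delta^2/8$ yields $\eta_s<\delta$. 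I expect the only, and mild, obstacle to be exactly this bookkeeping: extracting $K(d,s)$ from \cite[formula~(9)]{MS} with enough precision — in the ground-state picture, bounding from below the $\Gamma$-function ratio encoded in $\Phi_{d,s}(\gamma)$ for a convenient $\gamma=\gamma_s$; in the elementary picture, carrying through and optimizing the absorption argument. Everything else — the algebraic identity, positivity of the discarded term, the reduction to $\Omega$, and the density/Fatou passage — is standard.
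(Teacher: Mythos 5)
Your proposal follows the same route as the paper, whose entire proof is the observation that the statement \emph{is} \cite[formula (9)]{MS}: for $p=2$ that formula reads precisely as \eqref{eq:mazsha}, with the same factors $2^{2s}$ and $(1-\delta)^{-2}$ and the same restriction $s<\delta^2/8$, so the constant extraction you defer as ``bookkeeping'' is already carried out in the reference and nothing remains to be verified on that front. The density/Fatou passage from $C^\infty_{\mathrm c}$ to general $u\in\mathcal H^s_0(\Omega)$ that you supply is a sensible supplement which the paper leaves implicit, and your sketch of the mechanism behind formula (9) (the identity $|x|^{-2s}=\tfrac{2s\,2^{2s}}{d\omega_d}\int_{\{|x-y|>2|x|\}}|x-y|^{-d-2s}\,\ud y$ followed by absorption) is the correct one.
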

%%%%%%%%%%%%%%%%%%%%%%%%%%%%%%%%%%%%%%%%
%%%%%%%%%%%%%%%%%%%%%%%%%%%%%%%%%%%%%%%%
%%%%%%%%%%%%%%%%%%%%%%%%%%%%%%%%%%%%%%%%
The following theorem follows easily from the above estimate.
\begin{theorem}\label{gammafs}
Let $\{s_n\}_{n\in\N}\subset(0,1)$ be such that $s_n\to 0^+$ as $n\to +\infty$. %Let $R>0$\,.
\begin{itemize}
%%%%%%%%%%%%%%%%%%%%%%%%%%%%%%%%%%
\item[(i)](Compactness) Let $\{u^n\}_{n\in\N}\subset L^2(\Omega)$ be such that 
$$
\sup_{n\in\N}s_nF^{s_n}(u^n)\le C,
$$ 
for some constant $C\in\R$. Then, up to a subsequence, $u^n\weakly u$ in $L^2(\Omega)$ for some $u\in L^2(\Omega)$.
%%%%%%%%%%%%%%%%%%%%%%%%%%%%%%%%%%
\item[(ii)]($\Gamma$-liminf inequality) For every $u\in L^2(\Omega)$ and for every $\{u^n\}_{n\in\N}\subset L^2(\Omega)$ with $u^n\weakly u$ in $L^2(\Omega)$, it holds
\begin{equation*}%\label{liminfFs}
F^{0}(u)\le \liminf_{n\to +\infty}s_nF^{s_n}(u^n).
\end{equation*}
%%%%%%%%%%%%%%%%%%%%%%%%%%%%%%%%%%
\item[(iii)]($\Gamma$-limsup inequality) For every $u\in L^2(\Omega)$ there exists a sequence $\{u^n\}_{n\in\N}\subset L^2(\Omega)$ with 
%$u_n\in \mathcal{H}^{s_n}_0(\Omega)$ for every $n\in\N$ and 
$u^n\to u$ in $L^2(\Omega)$ such that
\begin{equation*}%\label{limsupFs}
F^{0}(u)= \lim_{n\to +\infty}s_nF^{s_n}(u^n).
\end{equation*}
\end{itemize}
\end{theorem}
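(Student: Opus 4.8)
\emph{Overview.} The plan is to obtain the compactness statement and the $\Gamma$-liminf inequality directly from the one-sided bound \eqref{eq:mazsha} of Theorem~\ref{thm:mazsha}, and to obtain the $\Gamma$-limsup inequality from the $s\to0^+$ pointwise convergence of the scaled seminorms established in \cite{MS}, together with a standard density argument. Throughout I use that $R:=\sup_{x\in\Omega}|x|\in(0,+\infty)$, so that $|x|^{-2s}\ge R^{-2s}$ for every $x\in\Omega$ and $s\in(0,1)$, and that $R^{-2s}\to1$, $2^{2s}\to1$ as $s\to0^+$.

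\emph{Compactness and $\Gamma$-liminf.} Let $\{u^n\}$ be as in (i). Each $u^n$ lies in $\mathcal{H}^{s_n}_0(\Omega)$ (otherwise $F^{s_n}(u^n)=+\infty$, against the uniform bound). Fix $\delta\in(0,1)$; for every $n$ with $s_n<\delta^2/8$, Theorem~\ref{thm:mazsha} and $|x|^{-2s_n}\ge R^{-2s_n}$ on $\Omega$ give
\[
\frac{d\omega_d}{2}\,R^{-2s_n}\|u^n\|_{L^2}^2\ \le\ \frac{d\omega_d}{2}\int_\Omega\frac{|u^n(x)|^2}{|x|^{2s_n}}\ud x\ \le\ \frac{2^{2s_n}}{(1-\delta)^2}\,s_n F^{s_n}(u^n)\,.
\]
The right-hand side is $\le(1-\delta)^{-2}2^{2s_n}C$, which is bounded in $n$, while $R^{-2s_n}$ is bounded away from $0$; hence $\{u^n\}$ is bounded in $L^2(\Omega)$ and has a weakly convergent subsequence, which is (i). For (ii), let $u^n\weakly u$ in $L^2(\Omega)$; we may assume $\liminf_n s_nF^{s_n}(u^n)<+\infty$ and pass to a (not relabelled) subsequence realizing this $\liminf$, along which, for each fixed $\delta\in(0,1)$, the displayed inequality holds for all $n$ large. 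Taking $\liminf_n$ and using $R^{-2s_n},2^{2s_n}\to1$ together with the weak lower semicontinuity of $\|\cdot\|_{L^2}$, we get $\frac{d\omega_d}{2}\|u\|_{L^2}^2\le(1-\delta)^{-2}\liminf_n s_nF^{s_n}(u^n)$; letting $\delta\to0^+$ yields $F^0(u)\le\liminf_n s_nF^{s_n}(u^n)$.

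\emph{$\Gamma$-limsup.} Since \eqref{eq:mazsha} only bounds $F^s$ from below, the recovery sequence has to come from the matching \emph{upper} asymptotics, i.e.\ from the pointwise limit of \cite{MS}: for every $v\in C^\infty_{\mathrm c}(\Omega)$, whose zero extension $\tilde v$ lies in $W^{s,2}(\R^d)$ for all $s$, one has $s_nF^{s_n}(v)=s_n[\tilde v]_{s_n}^2\to F^0(v)$, so the constant sequence is already a recovery sequence for smooth $v$. Since $C^\infty_{\mathrm c}(\Omega)$ is dense in $L^2(\Omega)$ and $F^0$ is continuous on $L^2(\Omega)$, a diagonal argument extends this to an arbitrary $u\in L^2(\Omega)$: choose $v_k\in C^\infty_{\mathrm c}(\Omega)$ with $v_k\to u$ in $L^2$, then $n_1<n_2<\cdots\to+\infty$ with $|s_nF^{s_n}(v_k)-F^0(v_k)|\le1/k$ for all $n\ge n_k$, and set $u^n:=v_k$ for $n_k\le n<n_{k+1}$; then $u^n\to u$ in $L^2$ and, since also $F^0(v_k)\to F^0(u)$, $s_nF^{s_n}(u^n)\to F^0(u)$.

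\emph{Main difficulty.} There is essentially no obstacle here: once \eqref{eq:mazsha} is on hand, (i) and (ii) are immediate, and the only additional ingredient is the matching upper bound along a recovery sequence, which is precisely the $s\to0^+$ pointwise limit of \cite{MS}. The one point requiring a little care is that this pointwise limit must be applied to functions with finite $s$-seminorm for all small $s$, which is what forces the reduction to $C^\infty_{\mathrm c}(\Omega)$ and the routine density/diagonalization step.
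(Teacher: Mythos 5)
Your proposal is correct and follows essentially the same route as the paper: compactness and the liminf inequality from the Maz'ya--Shaposhnikova bound \eqref{eq:mazsha} combined with $\Omega\subset B_R$ and weak lower semicontinuity of the $L^2$-norm, and the limsup inequality from the pointwise convergence of \cite{MS} on $\CC^\infty_{\cc}(\Omega)$ plus density and diagonalization. No gaps.
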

%%%%%%%%%%%%%%%%%%%%%%%%%%%%%%%%%%%%%%%%
%%%%%%%%%%%%%%%%%%%%%%%%%%%%%%%%%%%%%%%%
%%%%%%%%%%%%%%%%%%%%%%%%%%%%%%%%%%%%%%%% 
\begin{proof}
Since $\Omega$ is bounded, there exists $0<R<+\infty$ such that $\Omega\subset B_R$\,.
Therefore, in view of \eqref{eq:mazsha} and of the energy bound, for $n$ large enough, we have that
\begin{equation}\label{triv1}
\begin{aligned}
 \frac{1}{R^{2s_n}} \int_{\Omega}|u^n(x)|^2\ud x \le \int_{\Omega}\frac{|u^n(x)|^2}{|x|^{2s_n}}\ud x 
 \le C(d)\,.
\end{aligned}
\end{equation}
It follows that $\|u^n\|_{L^2(\Omega)}$ is uniformly bounded and hence, up to a subsequence, $u^n\weakly u$ in $L^2(\Omega)$ for some $u\in L^2(\Omega)$\,, proving (i). 
\vskip4pt
Let us pass to the proof of (ii).
Let $\delta\in(0,1)$ be fixed. Using again 
\eqref{eq:mazsha}, for $n$ large enough, we have
 \begin{equation*}
\begin{aligned}
s_n F^{s_n}(u^n)\ge  \frac{(1-\delta)^2}{2^{2s_n}}\frac{d\omega_d}{2}\int_{\Omega}\frac{|u^n(x)|^2}{|x|^{2s_n}}\ud x\ge \frac{ (1-\delta)^2}{2^{2s_n}R^{2s_n }}\frac{d\omega_d}{2}\int_{\Omega}|u^n(x)|^2\ud x,
\end{aligned}
\end{equation*}
which, passing to the limit as $n\to +\infty$ and using the weak lower semicontinuity of the $L^2$ norm, yields
$$
\liminf_{n\to +\infty} s_n F^{s_n}(u^n)\ge (1-\delta)^2 \frac{d\omega_d}{2}\int_{\Omega}|u(x)|^2\ud x;
$$
by the arbitrariness of $\delta$, the claim (ii) follows.
\vskip4pt
Now we show that also (iii) holds true. If $u\in \CC^{\infty}_{\cc}(\Omega)$\,, the claim is proven in \cite[Theorem 3]{MS}, with $u^n\equiv u$\,. Since  $ \CC^{\infty}_{\cc}(\Omega)$ is dense in $L^2(\Omega)$\,, the general case follows by a standard diagonal argument. 
\end{proof}
%%%%%%%%%%%%%%%%%%%%%%%%%%%%%%%%%%%%%%%%
%%%%%%%%%%%%%%%%%%%%%%%%%%%%%%%%%%%%%%%%
%%%%%%%%%%%%%%%%%%%%%%%%%%%%%%%%%%%%%%%% 
\subsection{The first order $\Gamma$-limit of the functionals $F^{s}$ as $s\to 0^+$}
%%%%%%%%%%%%%%%%%%%%%%%%%%%%%%%%%%%%%%%%
%%%%%%%%%%%%%%%%%%%%%%%%%%%%%%%%%%%%%%%%
%%%%%%%%%%%%%%%%%%%%%%%%%%%%%%%%%%%%%%%% 
In order to compute the $\Gamma$-limit of the renormalized functionals
$F^{s}-\frac{1}{s}F^{0}$ as $s\to 0^+$ we need to rewrite the functional $F^{s}$ in a different manner.
%%%%%%%%%%%%%%%%%%%%%%%%%%%%%%%%%%%%%%%%
%%%%%%%%%%%%%%%%%%%%%%%%%%%%%%%%%%%%%%%%
%%%%%%%%%%%%%%%%%%%%%%%%%%%%%%%%%%%%%%%% 
Let $s\in [0,1)$.
We define the functional $G^{s}_{1}:L^2(\Omega)\to [0,+\infty]$ as
\begin{equation}\label{HsR}
\displaystyle G^{s}_1(u):=\frac 1 2\iint_{\B_1}\frac{|\tilde u(x)-\tilde u(y)|^2}{|x-y|^{d+2s}}\ud y\ud x\,,
\end{equation}
where $\B_1:=\{(x,y)\in\R^{d}\times\R^d\,:\,|x-y|<1\}$\,,
and the functional  $J^s_1:L^2(\Omega)\to (-\infty,+\infty)$ as
\begin{equation}\label{Jsr}
J^s_1(u):=-\iint_{\R^{2d}\setminus \overline\B_1}\frac{\tilde u(x)\tilde u(y)}{|x-y|^{d+2s}}\ud y\ud x\,.
\end{equation}
We notice that the functionals $J^s_1$ are well-defined in $L^2(\Omega)$ since, by H\"older inequality,
\begin{equation}\label{estJsr}
|J^s_{1}(u)|\le \|u\|_{L^1(\Omega)}^2\le |\Omega|\|u\|^2_{L^2(\Omega)}\,.
\end{equation}
It is easy to check that for every $s\in (0,1)$
\begin{equation}\label{reprs}
\hat F^s(u):=F^s(u)-\frac 1 s F^0(u)=G^s_1(u)+J^s_1(u)\qquad\textrm{for every }u\in L^2(\Omega).
\end{equation}
In analogy with \eqref{reprs}, we define the functionals $\hat F^0:L^2(\Omega)\to (-\infty,+\infty]$ as
\begin{equation}\label{repr0}
\hat F^0(u):=G^0_1(u)+J^0_1(u)\,,
\end{equation}
and we introduce the space
$$
\mathcal{H}_0^{0}(\Omega):=\{u\in L^2(\Omega)\,:\,G^0_1(u)<+\infty\}\,.
$$
%%%%%%%%%%%%%%%%%%%%%%%%%%%%%%%%%%%%%%%%
%%%%%%%%%%%%%%%%%%%%%%%%%%%%%%%%%%%%%%%%
%%%%%%%%%%%%%%%%%%%%%%%%%%%%%%%%%%%%%%%% 
\begin{remark}
\rm{It is natural to endow the space $\mathcal{H}_0^0(\Omega)$ with a {\it 0-Gagliardo} type norm
$$
[u]_0:=(2 G^0_1(u))^{\frac{1}{2}}\,.
$$
}
\end{remark}
%%%%%%%%%%%%%%%%%%%%%%%%%%%%%%%%%%%%%%%%
%%%%%%%%%%%%%%%%%%%%%%%%%%%%%%%%%%%%%%%%
%%%%%%%%%%%%%%%%%%%%%%%%%%%%%%%%%%%%%%%%
We are now in a position to state our $\Gamma$-convergence result for the functionals
$\hat F^s$ defined in \eqref{reprs}\,.
%%%%%%%%%%%%%%%%%%%%%%%%%%%%%%%%%%%%%%%%
%%%%%%%%%%%%%%%%%%%%%%%%%%%%%%%%%%%%%%%%
%%%%%%%%%%%%%%%%%%%%%%%%%%%%%%%%%%%%%%%%
\begin{theorem}\label{thm:stozerofo}
Let $\{s_n\}_{n\in\N}\subset(0,1)$ be such that $s_n\to 0^+$ as $n\to +\infty$\,. The following $\Gamma$-convergence result holds true.
%%%%%%%%%%%%%%%%%%%%%%%%%%%%%%%%%%%%%%%%
\begin{itemize}
%%%%%%%%%%%%%%%%%%%%%%%%%%%%%%%%%%%%%%%%
\item[(i)] (Compactness) Let $\{u^n\}_{n\in\N}\subset L^2(\Omega)$ be such that
\begin{equation}\label{COMP2}
\hat F^{s_n}(u^n)+ 2|\Omega| \| u^n \|^2_{L^2(\Omega)}\le M,
\end{equation} 
for some constant $M$ independent of $n$\,. Then, up to a subsequence, $u^n\to u$ strongly in $L^2(\Omega)$ for some $u\in \mathcal{H}_0^0(\Omega)$\,.
%%%%%%%%%%%%%%%%%%%%%%%%%%%%%%%%%%%%%%%%
\item[(ii)] ($\Gamma$-liminf inequality) For every $u\in L^2(\Omega)$ and for every $\{u^n\}_{n\in\N}\subset L^2(\Omega)$ with $u^n\to u$ in $L^2(\Omega)$\,, it holds
$$
\hat F^0(u)\le\liminf_{n\to +\infty}\hat F^{s_n}(u^n)\,.
$$
%%%%%%%%%%%%%%%%%%%%%%%%%%%%%%%%%%%%%%%%
\item[(iii)] ($\Gamma$-limsup inequality) For every $u\in\mathcal{H}_0^0(\Omega)$ there exists $\{u^n\}_{n\in\N}\subset L^2(\Omega)$ with $u^n\to u$ in $L^2(\Omega)$ such that
$$
\hat F^0(u)=\lim_{n\to +\infty}\hat F^{s_n}(u^n)\,.
$$
\end{itemize}
\end{theorem}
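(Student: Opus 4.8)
The plan is to treat the three parts separately, observing that the decomposition $\hat F^{s}=G^s_1+J^s_1$ in \eqref{reprs} splits the problem into a ``good'' term $G^s_1$, which is coercive and lower semicontinuous and which should $\Gamma$-converge (in fact converge monotonically, or at least continuously along recovery sequences) to $G^0_1$, and a ``perturbation'' term $J^s_1$, which by the bound \eqref{estJsr} is a continuous perturbation in the strong $L^2$-topology uniformly in $s$. For (i), I would start from \eqref{COMP2}: since $\hat F^{s_n}(u^n)=G^{s_n}_1(u^n)+J^{s_n}_1(u^n)$ and $|J^{s_n}_1(u^n)|\le|\Omega|\|u^n\|^2_{L^2}$ by \eqref{estJsr}, the hypothesis gives both a uniform bound on $\|u^n\|_{L^2(\Omega)}$ and a uniform bound on $G^{s_n}_1(u^n)$. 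The kernel $|x-y|^{-d-2s_n}$ on $\B_1$ is bounded below by $|x-y|^{-d}$ (since $|x-y|<1$ and $s_n>0$) and, more usefully, for $s_n$ small it dominates $|x-y|^{-d-2\sigma}$ for any fixed small $\sigma>0$ on the region $|x-y|<1$; hence a uniform bound on $G^{s_n}_1(u^n)$ yields a uniform bound on the truncated Gagliardo energy $\iint_{\B_1}|\tilde u^n(x)-\tilde u^n(y)|^2|x-y|^{-d-2\sigma}\ud y\ud x$ for fixed $\sigma$, which together with the $L^2$-bound gives (via the standard fractional-Sobolev compact embedding, e.g.\ \cite[Theorem~7.1]{DPV}, applied on a bounded neighbourhood of $\overline\Omega$ since the $u^n$ are supported in $\Omega$) strong $L^2(\Omega)$ compactness. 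The limit $u$ then satisfies $G^0_1(u)<+\infty$ by Fatou applied to $G^{s_n}_1$ as $s_n\to0$ (the kernels increase to $|x-y|^{-d}$ on $\B_1$), so $u\in\mathcal H^0_0(\Omega)$.

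For (ii), suppose $u^n\to u$ strongly in $L^2(\Omega)$; I may assume the right-hand side is finite, so along a subsequence realizing the liminf we have a uniform bound on $\hat F^{s_n}(u^n)$ and hence, as above, on $G^{s_n}_1(u^n)$. The term $J^{s_n}_1$ converges to $J^0_1$ along this sequence: indeed $J^{s_n}_1(u^n)-J^0_1(u)$ splits as $(J^{s_n}_1(u^n)-J^{s_n}_1(u))+(J^{s_n}_1(u)-J^0_1(u))$; the first difference is controlled by $\|u^n-u\|_{L^1}(\|u^n\|_{L^1}+\|u\|_{L^1})\to0$ using \eqref{estJsr}-type estimates, and the second tends to $0$ by dominated convergence since $\tilde u(x)\tilde u(y)|x-y|^{-d-2s_n}\to\tilde u(x)\tilde u(y)|x-y|^{-d}$ pointwise on $\R^{2d}\setminus\overline\B_1$ with integrable dominating function $|\tilde u(x)\tilde u(y)||x-y|^{-d}$ on that set (the support being bounded, so $|x-y|$ is also bounded above on the relevant region). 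Hence it remains to show $G^0_1(u)\le\liminf_n G^{s_n}_1(u^n)$; this follows from pointwise lower semicontinuity: for $(x,y)\in\B_1$, since $|x-y|<1$ one has $|x-y|^{-d-2s_n}\ge|x-y|^{-d}$, so $G^{s_n}_1(u^n)\ge\frac12\iint_{\B_1}|\tilde u^n(x)-\tilde u^n(y)|^2|x-y|^{-d}\ud y\ud x$, and this last functional is lower semicontinuous with respect to strong $L^2$-convergence by Fatou (passing to a further subsequence with a.e.\ pointwise convergence of $u^n$). Combining the two convergences gives the claim.

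For (iii), given $u\in\mathcal H^0_0(\Omega)$, the natural candidate is simply $u^n\equiv u$. Then $\hat F^{s_n}(u)=G^{s_n}_1(u)+J^{s_n}_1(u)$, and I want to show this tends to $G^0_1(u)+J^0_1(u)=\hat F^0(u)$. For the $J$-term this is exactly the dominated-convergence argument above with $u^n=u$. For the $G$-term, note that on $\B_1$ the kernels $|x-y|^{-d-2s_n}$ increase monotonically to $|x-y|^{-d}$ as $s_n\downarrow0$ (since $|x-y|<1$); hence by the monotone convergence theorem $G^{s_n}_1(u)\uparrow G^0_1(u)$, which is finite precisely because $u\in\mathcal H^0_0(\Omega)$. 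Thus $\hat F^{s_n}(u)\to\hat F^0(u)$ with the constant recovery sequence, and $u^n=u\to u$ trivially in $L^2$.

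The main obstacle I anticipate is part (i): one must convert the bound on the ``order-zero-truncated'' energy $G^{s_n}_1(u^n)$, whose kernel degenerates in scaling exponent as $s_n\to0$, into genuine $L^2$-compactness. The clean way is the observation that for $s_n$ below any threshold the kernel on $\{|x-y|<1\}$ dominates $c\,|x-y|^{-d-2\sigma}$ for a \emph{fixed} $\sigma\in(0,1)$, giving a uniform bound in the fixed space $\mathcal H^\sigma$ on a bounded domain and hence compactness in $L^2$ by Rellich; one should check the elementary inequality $|x-y|^{-d-2s_n}\ge |x-y|^{-d-2\sigma}$ fails in the wrong direction, so instead one uses that $\iint_{\B_1}|v(x)-v(y)|^2|x-y|^{-d-2s}\ud x\ud y$ is \emph{nondecreasing} in the region and bounds from below the truncated $\mathcal H^{\sigma}$-seminorm only after localizing to $|x-y|<r_0$ for a small $r_0$, adding back the easily-controlled contribution from $r_0\le|x-y|<1$ via the $L^2$-bound. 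Everything else reduces to monotone/dominated convergence and Fatou, which are routine.
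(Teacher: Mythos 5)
Your part (ii) is fine and is essentially the paper's argument (Fatou for $G^{s_n}_1$ after using $|x-y|^{-d-2s_n}\ge|x-y|^{-d}$ on $\B_1$, plus the uniform continuity estimate \eqref{estJsr}/\eqref{DCT2} and dominated convergence for $J^{s_n}_1$). The two other parts, however, each contain a genuine gap. For the compactness (i), your key claim — that for $s_n$ small the kernel $|x-y|^{-d-2s_n}$ dominates $|x-y|^{-d-2\sigma}$ on $\{|x-y|<1\}$ for some fixed $\sigma>0$ — is false, and the repair you sketch at the end (localizing to $|x-y|<r_0$) does not help: for $s_n<\sigma$ the ratio $|x-y|^{-d-2s_n}/|x-y|^{-d-2\sigma}=|x-y|^{2(\sigma-s_n)}$ tends to $0$ as $|x-y|\to0$, so the inequality is in the wrong direction precisely at small scales, on every ball $\{|x-y|<r_0\}$. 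The failure is not just of the pointwise bound but of the strategy: a bound on $G^{s_n}_1(u^n)$ with $s_n\to0$ gives no uniform bound in any fixed $H^\sigma$, since $\mathcal H^0_0(\Omega)$ strictly contains all such spaces (e.g.\ sums of shrinking bumps with suitable heights have $G^0_1<+\infty$ but infinite $\sigma$-Gagliardo seminorm for every $\sigma>0$; their recovery sequences satisfy \eqref{COMP2}, so if your claim held their limits would lie in $H^\sigma$, a contradiction). One needs a compactness criterion adapted to the limiting logarithmic kernel: the paper invokes the Jarohs--Weth theorem (Theorem~\ref{jw}) with $k(z)=\chi_{B_1}(z)|z|^{-d}$, whose hypothesis $\int_{\R^d}k=+\infty$ is exactly what replaces positive-order fractional regularity.

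For the limsup (iii), the constant sequence $u^n\equiv u$ does not work for general $u\in\mathcal H^0_0(\Omega)$, and your monotone-convergence step is reversed: for $|x-y|<1$ the kernels $|x-y|^{-d-2s_n}$ \emph{decrease} to $|x-y|^{-d}$ as $s_n\downarrow0$, so $G^{s_n}_1(u)\ge G^0_1(u)$, and passing to the limit from above requires $G^{s_0}_1(u)<+\infty$ for some $s_0>0$, i.e.\ $u\in\mathcal H^{s_0}_0(\Omega)$ — which is not guaranteed: there are $u\in\mathcal H^0_0(\Omega)$ (bump constructions as above, or functions of purely logarithmic smoothness) with $\hat F^{s_n}(u)=+\infty$ for every $n$, so the constant sequence gives $\liminf=+\infty$. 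This is precisely why the paper proves pointwise convergence $\hat F^{s}(u)\to\hat F^0(u)$ only for $u\in\CC^\infty_\cc(\Omega)$ (Lemma~\ref{ficc}, where the Lipschitz bound $|u(x)-u(y)|^2\le C|x-y|^2$ tames the kernel near the diagonal), then establishes density of $\CC^\infty_\cc(\Omega)$ in $\mathcal H^0_0(\Omega)$ with convergence of the energies $G^0_1$ and $J^0_1$ (Lemma~\ref{lm:density}), and concludes by a diagonal argument. Your proof of (iii) would need to be replaced by such a density-plus-diagonalization scheme.
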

%%%%%%%%%%%%%%%%%%%%%%%%%%%%%%%%%%%%%%%%
%%%%%%%%%%%%%%%%%%%%%%%%%%%%%%%%%%%%%%%%
%%%%%%%%%%%%%%%%%%%%%%%%%%%%%%%%%%%%%%%% 
\begin{remark}\label{DCT}
	Notice that for all $s\in [0,1)$ and for all $u,\, v \in L^2(\Omega)$ we have
\begin{equation}\label{DCT2}
\begin{aligned}
|J^s_1(u) - J^s_1(v)| \le &  (\|u\|_{L^1(\Omega)} + \|v\|_{L^1(\Omega)})\|u-v\|_{L^1(\Omega)}
\\
\le & |\Omega| (\|u\|_{L^2(\Omega)} + \|v\|_{L^2(\Omega)})\|u-v\|_{L^2(\Omega)}\, . 
\end{aligned}
\end{equation}
	
In particular, for all $s\in [0,1)$ the functionals $J^s_1$ are continuous with respect to the strong  $L^2$ convergence.
\end{remark}
%%%%%%%%%%%%%%%%%%%%%%%%%%%%%%%%%%%%%%%%
%%%%%%%%%%%%%%%%%%%%%%%%%%%%%%%%%%%%%%%%
%%%%%%%%%%%%%%%%%%%%%%%%%%%%%%%%%%%%%%%%
\subsection{Compactness and $\Gamma$-liminf inequality}
%%%%%%%%%%%%%%%%%%%%%%%%%%%%%%%%%%%%%%%%
%%%%%%%%%%%%%%%%%%%%%%%%%%%%%%%%%%%%%%%%
%%%%%%%%%%%%%%%%%%%%%%%%%%%%%%%%%%%%%%%%
In order to prove (i) of Theorem \ref{thm:stozerofo}, we recall the following result proven in \cite{JW} .
%%%%%%%%%%%%%%%%%%%%%%%%%%%%%%%%%%%%%%%%
%%%%%%%%%%%%%%%%%%%%%%%%%%%%%%%%%%%%%%%%
%%%%%%%%%%%%%%%%%%%%%%%%%%%%%%%%%%%%%%%%
\begin{theorem}[Local compactness \cite{JW}]\label{jw}
Let $k:\R^d\to [0,+\infty]$ be a radially symmetric kernel such that
$$
\int_{\R^d}k(z)\ud z=+\infty\qquad\textrm{and}\qquad\int_{\R^d}\min\{1,|z|^2\}k(z)\ud z<+\infty
$$ 
and let
$$
\mathcal{W}^{k}(\Omega):=\left\{u\in L^2(\Omega)\,:\,\iint_{\R^{2d}}|\tilde u(x)-\tilde u(y)|^2k(x-y)\ud y\ud x<+\infty\right\}
$$
be the Banach space endowed with the norm
$$
\|u\|_{\mathcal{W}^k(\Omega)}:=\|u\|_{L^2(\Omega)}+\Big(\iint_{\R^{2d}}|\tilde u(x)-\tilde u(y)|^2k(x-y)\ud y\ud x\Big)^{\frac 1 2}\,.
$$
Then, the embedding $\mathcal{W}^{k}(\Omega)\hookrightarrow L^2(\Omega)$ is compact.
\end{theorem}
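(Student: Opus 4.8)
We sketch a proof. The plan is to verify the Riesz--Fr\'echet--Kolmogorov compactness criterion on the Fourier side, where the quadratic form becomes a Fourier multiplier whose symbol diverges at infinity. Identify each $u\in\mathcal{W}^k(\Omega)$ with its zero-extension $\tilde u$, so that all the functions in play are supported in the fixed bounded set $\overline\Omega$; then tightness at infinity is automatic, and it only remains to prove the equicontinuity of translations
\[
\sup_{u\in\mathcal{F}}\big\|\tilde u(\cdot+h)-\tilde u\big\|_{L^2(\R^d)}\longrightarrow0\qquad\text{as }h\to0
\]
for every $\mathcal{F}\subset\mathcal{W}^k(\Omega)$ which is bounded, say $\|u\|_{L^2(\Omega)}^2\le C_0$ and $\iint_{\R^{2d}}|\tilde u(x)-\tilde u(y)|^2k(x-y)\,\ud y\,\ud x\le C_1$ on $\mathcal{F}$.

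\emph{Step 1 (Fourier representation).} First I would substitute $y=x-z$ and use Tonelli to write the quadratic form as $\int_{\R^d}k(z)\,\|\tilde u(\cdot+z)-\tilde u\|_{L^2(\R^d)}^2\,\ud z$; then, by Plancherel (in the $L^2$-isometric normalization) applied to the inner norm and Tonelli once more,
\[
\iint_{\R^{2d}}|\tilde u(x)-\tilde u(y)|^2k(x-y)\,\ud y\,\ud x=\int_{\R^d}\mathfrak m(\xi)\,|\hat{\tilde u}(\xi)|^2\,\ud\xi,\qquad\mathfrak m(\xi):=\int_{\R^d}\big(2-2\cos(\xi\cdot z)\big)k(z)\,\ud z,
\]
and similarly $\|\tilde u(\cdot+h)-\tilde u\|_{L^2(\R^d)}^2=\int_{\R^d}\big(2-2\cos(\xi\cdot h)\big)|\hat{\tilde u}(\xi)|^2\,\ud\xi$. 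Since $2-2\cos(\xi\cdot z)\le(|\xi|^2+4)\min\{1,|z|^2\}$, the hypothesis $\int_{\R^d}\min\{1,|z|^2\}k<+\infty$ guarantees that $\mathfrak m$ is finite on $\R^d$.

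\emph{Step 2 (the symbol diverges at infinity).} This is the heart of the matter: I claim $m_R:=\inf_{|\xi|>R}\mathfrak m(\xi)\to+\infty$ as $R\to+\infty$. From $\int_{\R^d}\min\{1,|z|^2\}k<+\infty$ one gets $\int_{\R^d\setminus B_1}k<+\infty$, hence $\int_{B_1}k=+\infty$. Fix $\varepsilon\in(0,1)$: on $B_1\setminus B_\varepsilon$ one has $k(z)\le\varepsilon^{-2}|z|^2k(z)\le\varepsilon^{-2}\min\{1,|z|^2\}k(z)$, so $k\in L^1(B_1\setminus B_\varepsilon)$, and by the Riemann--Lebesgue lemma $\int_{B_1\setminus B_\varepsilon}\cos(\xi\cdot z)k(z)\,\ud z\to0$ as $|\xi|\to+\infty$. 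Since $2-2\cos(\xi\cdot z)\ge0$ everywhere,
\[
\liminf_{|\xi|\to+\infty}\mathfrak m(\xi)\ \ge\ \liminf_{|\xi|\to+\infty}\int_{B_1\setminus B_\varepsilon}\big(2-2\cos(\xi\cdot z)\big)k(z)\,\ud z\ =\ 2\int_{B_1\setminus B_\varepsilon}k(z)\,\ud z,
\]
and letting $\varepsilon\to0^+$ with $\int_{B_1}k=+\infty$ (monotone convergence) yields $\liminf_{|\xi|\to+\infty}\mathfrak m(\xi)=+\infty$, i.e. $m_R\to+\infty$. I expect this to be the only genuinely delicate step: one cannot apply Riemann--Lebesgue directly to $\int_{B_1}\cos(\xi\cdot z)k(z)\,\ud z$, because $k\notin L^1(B_1)$, so one must truncate at radius $\varepsilon$, discard the nonnegative contribution near the origin, and exploit the nonintegrability of $k$ at $0$ only afterwards.

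\emph{Step 3 (conclusion).} For $u\in\mathcal{F}$ I would split the Fourier integral for $\|\tilde u(\cdot+h)-\tilde u\|_{L^2}^2$ at $|\xi|=R$: on $\{|\xi|\le R\}$ use $2-2\cos(\xi\cdot h)\le|h|^2|\xi|^2\le|h|^2R^2$ together with $\int_{\R^d}|\hat{\tilde u}|^2=\|\tilde u\|_{L^2}^2\le C_0$, and on $\{|\xi|>R\}$ use $2-2\cos(\xi\cdot h)\le4\le4\mathfrak m(\xi)/m_R$ together with Step 1. This gives
\[
\big\|\tilde u(\cdot+h)-\tilde u\big\|_{L^2(\R^d)}^2\ \le\ |h|^2R^2C_0+\frac{4C_1}{m_R}\qquad\text{for every }u\in\mathcal{F}.
\]
Given $\eta>0$, choose $R$ with $4C_1/m_R<\eta/2$ (possible by Step 2) and then $|h|$ small enough that $|h|^2R^2C_0<\eta/2$: this is the required equicontinuity, uniform over $\mathcal{F}$. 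Hence, by the Riesz--Fr\'echet--Kolmogorov theorem, $\{\tilde u:u\in\mathcal{F}\}$ is precompact in $L^2(\R^d)$, so $\mathcal{F}$ is precompact in $L^2(\Omega)$, which is the asserted compactness of $\mathcal{W}^k(\Omega)\hookrightarrow L^2(\Omega)$. The Fourier route is precisely what makes Step~2 transparent; a purely real-variable approach is more cumbersome, since $k$ is only assumed radial and need not be monotone or bounded below.
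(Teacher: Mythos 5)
Your argument is correct, and it is worth noting that the paper itself does not prove this statement: it is recalled from \cite{JW} and used as a black box, so there is no internal proof to compare against. Your Fourier-multiplier reconstruction is essentially the argument of Jarohs--Weth: writing the quadratic form as $\int_{\R^d}\mathfrak m(\xi)|\hat{\tilde u}(\xi)|^2\,\ud\xi$ and showing that $\mathfrak m(\xi)\to+\infty$ as $|\xi|\to+\infty$ is exactly the mechanism that converts non-integrability of $k$ at the origin into equicontinuity of translations. The two delicate points are both handled properly: in Step 2 you correctly avoid applying Riemann--Lebesgue on all of $B_1$ (where $k\notin L^1$) by truncating at radius $\varepsilon$, discarding the nonnegative contribution of $B_\varepsilon$, and only then letting $\varepsilon\to0^+$ to exploit $\int_{B_1}k=+\infty$; and in Step 3 the splitting at $|\xi|=R$ with $2-2\cos(\xi\cdot h)\le\min\{|h|^2R^2,\,4\mathfrak m(\xi)/m_R\}$ gives the translation estimate uniformly over the bounded set. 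Together with the fact that all zero-extensions are supported in the fixed bounded set $\overline\Omega$ (so tightness is free), the Riesz--Fr\'echet--Kolmogorov criterion applies and the embedding is compact. I have no corrections; the proof is complete as written.
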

%%%%%%%%%%%%%%%%%%%%%%%%%%%%%%%%%%%%%%%%
%%%%%%%%%%%%%%%%%%%%%%%%%%%%%%%%%%%%%%%%
%%%%%%%%%%%%%%%%%%%%%%%%%%%%%%%%%%%%%%%%
With Theorem \ref{jw} on hand, we are in a position to prove compactness.
\begin{proof}[Proof of Theorem \ref{thm:stozerofo}(i)]
By \eqref{COMP2}, \eqref{reprs} and \eqref{estJsr}, we have that
$$
M\ge \hat F^{s_n}(u^n)+2|\Omega|\|u^n\|^2_{L^2(\Omega)}\ge -|\Omega|\|u^n\|_{L^2(\Omega)}^2+ 2|\Omega|\|u^n\|^2_{L^2(\Omega)}= |\Omega|\|u^n\|^2_{L^2(\Omega)}\,,
$$
i.e., that $\|u^n\|_{L^2(\Omega)}$ is uniformly bounded.
%Since $\hat G^s(u)\geq -\|u\|_1^2$ by using Jensen inequality we have that if $ G^{s}(u_n)+ C^{\ast}\|u_n\|_{L^2(\Omega)}^2\leq K$ for each $n\in\N$, then $\|u_n\|_{L^2}$ is uniformly bounded and hence also $\|u_n\|_{L^1}$ is. 
Therefore, by \eqref{reprs} we deduce
	\begin{equation}\label{g01}
	\begin{aligned}
	G^0_1(u^n)\le& G^{s_n}_1(u^n)\le M+\iint_{\R^{2d}\setminus\overline\B_1}\frac{|\tilde u^n(x)||\tilde u^n(y)|}{|x-y|^{d+2s_n}}\ud y\ud x
	\le M+|\Omega|\|u^n\|^2_{L^2(\Omega)}\\
	\le& 2 M\,,
	\end{aligned}
	\end{equation}
	whence, by applying Theorem \ref{jw} with $k(z):=\frac{\chi_{B_1}(z)}{|z|^d}$\,, we deduce that, up to a subsequence, $u^n\to u$ in $L^2(\Omega)$ for some $u\in L^2(\Omega)$\,.
Finally, by \eqref{g01} and by the lower semicontinuity of the functional $G^0_1$ with respect to the strong $L^2$ convergence, we get that $u\in\mathcal{H}_0^0(\Omega)$\,.
\end{proof}
%%%%%%%%%%%%%%%%%%%%%%%%%%%%%%%%%%%%%%%%%
%%%%%%%%%%%%%%%%%%%%%%%%%%%%%%%%%%%%%%%%
%%%%%%%%%%%%%%%%%%%%%%%%%%%%%%%%%%%%%%%%%%
Now we prove the $\Gamma$-liminf inequality.
%%%%%%%%%%%%%%%%%%%%%%%%%%%%%%%%%%%%%%%%
%%%%%%%%%%%%%%%%%%%%%%%%%%%%%%%%%%%%%%%%
%%%%%%%%%%%%%%%%%%%%%%%%%%%%%%%%%%%%%%%%  
\begin{proof}[Proof of Theorem \ref{thm:stozerofo}(ii)]
By Fatou lemma we have 
\begin{equation*} 
%\label{liminf1}
G^0_1(u) \le \liminf_{n\to +\infty}G^{s_n}_1(u^n)\,;
\end{equation*} 
moreover, by
\eqref{DCT2} we easily get
\begin{equation*} 
%\label{liminf2}
J^0_1(u)=\lim_{n\to +\infty}J^{s_n}_1(u^n)\,.
\end{equation*} 
In view of \eqref{reprs} and \eqref{repr0}, we get the claim.
\end{proof}
%%%%%%%%%%%%%%%%%%%%%%%%%%%%%%%%%%%%%%%%
%%%%%%%%%%%%%%%%%%%%%%%%%%%%%%%%%%%%%%%%
%%%%%%%%%%%%%%%%%%%%%%%%%%%%%%%%%%%%%%%%  
\subsection{$\Gamma$-limsup inequality}
%%%%%%%%%%%%%%%%%%%%%%%%%%%%%%%%%%%%%%%%
%%%%%%%%%%%%%%%%%%%%%%%%%%%%%%%%%%%%%%%%
%%%%%%%%%%%%%%%%%%%%%%%%%%%%%%%%%%%%%%%%  
Here we construct the recovery sequence for the functionals $\hat F^s$\,.
 We start by showing that, for smooth functions, the pointwise limit of the functional $\hat F^s$ as $s\to 0^+$ coincides with the functionals $\hat F^0$\,.
%%%%%%%%%%%%%%%%%%%%%%%%%%%%%%%%%%%%%%%%
%%%%%%%%%%%%%%%%%%%%%%%%%%%%%%%%%%%%%%%%
%%%%%%%%%%%%%%%%%%%%%%%%%%%%%%%%%%%%%%%%  
\begin{lemma}\label{ficc}
For every $u\in \CC^{\infty}_{\cc}(\Omega)$ we have that
$$
\lim_{s\to 0^+} \hat F^s(u)=\hat F^0(u)\,.
$$
\end{lemma}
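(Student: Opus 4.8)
The statement to prove is Lemma~\ref{ficc}: for $u\in \CC^\infty_\cc(\Omega)$, $\hat F^s(u)\to \hat F^0(u)$ as $s\to 0^+$. By the decomposition \eqref{reprs} and \eqref{repr0}, it suffices to treat the two pieces $G^s_1$ and $J^s_1$ separately, so the plan is: (a) show $G^s_1(u)\to G^0_1(u)$, and (b) show $J^s_1(u)\to J^0_1(u)$.

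For part (a), I would argue by dominated convergence on the integral in \eqref{HsR}. Fix $u\in\CC^\infty_\cc(\Omega)$. For $(x,y)\in\B_1$ we have $|x-y|<1$, so the integrand $|\tilde u(x)-\tilde u(y)|^2/|x-y|^{d+2s}$ is \emph{increasing} in $s$; hence for $s$ in a bounded interval, say $s\le 1/2$, it is dominated by the value at $s=1/2$, namely $|\tilde u(x)-\tilde u(y)|^2/|x-y|^{d+1}$. This dominating function is integrable over $\B_1$: since $u$ is Lipschitz with compact support, $|\tilde u(x)-\tilde u(y)|\le L|x-y|$ and also $\tilde u$ vanishes off a bounded set, so $|\tilde u(x)-\tilde u(y)|^2\le L^2|x-y|^2$ gives local integrability near the diagonal ($\int_{|z|<1}|z|^{2-(d+1)}\,\ud z<\infty$), while away from the diagonal and with the support constraint the integral is manifestly finite (it is bounded by a constant times $\|u\|_\infty^2$ times the measure of a bounded set). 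The integrand converges pointwise to $|\tilde u(x)-\tilde u(y)|^2/|x-y|^d$ as $s\to 0^+$, so dominated convergence yields $G^s_1(u)\to G^0_1(u)$. (Alternatively, monotone convergence applies directly since the integrand increases as $s\downarrow 0$, once one knows the $s=0$ limit is finite, which the above domination argument also shows.)

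For part (b), the cleanest route is Remark~\ref{DCT}: for fixed $s$, one could try to pass to the limit in the defining integral \eqref{Jsr} of $J^s_1$, but here the kernel $|x-y|^{-d-2s}$ \emph{decreases} as $s\to 0^+$ on the region $\R^{2d}\setminus\overline\B_1$ where $|x-y|\ge 1$, and is dominated there by its value at $s=0$, i.e.\ by $|x-y|^{-d}$. Since $\tilde u$ has compact support, $\tilde u(x)\tilde u(y)$ vanishes unless both $x$ and $y$ lie in the bounded set $\Omega$; on $\{|x-y|\ge 1\}\cap(\Omega\times\Omega)$ the kernel $|x-y|^{-d}$ is bounded, so the dominating function $|\tilde u(x)||\tilde u(y)||x-y|^{-d}\chi_{\{|x-y|\ge1\}}$ is integrable. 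Pointwise the integrand converges to $\tilde u(x)\tilde u(y)|x-y|^{-d}$, and dominated convergence gives $J^s_1(u)\to J^0_1(u)$.

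Combining (a) and (b) through \eqref{reprs} and \eqref{repr0} gives the claim. I do not expect a genuine obstacle here: the only point requiring care is the integrability of the dominating functions near the diagonal in part (a), which is handled by the Lipschitz bound $|\tilde u(x)-\tilde u(y)|\le L|x-y|$ — this is exactly why we restrict to $\CC^\infty_\cc(\Omega)$ rather than general $u$. The rest is routine dominated/monotone convergence.
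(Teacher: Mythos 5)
Your proof is correct and takes essentially the same route as the paper: decompose $\hat F^s=G^s_1+J^s_1$, treat $J^s_1$ by dominated convergence using that the kernel is bounded by $|x-y|^{-d}\le 1$ off $\overline\B_1$, and handle $G^s_1$ via the Lipschitz bound $|\tilde u(x)-\tilde u(y)|\le L|x-y|$ — the only difference being that you use the single integrable majorant at $s=1/2$ (valid since $|x-y|<1$ on $\B_1$), while the paper splits off an $\ep$-neighbourhood of the diagonal and sends $s\to 0^+$ before $\ep\to 0^+$, which is equivalent bookkeeping. One small slip in your parenthetical alternative: on $\{|x-y|<1\}$ the integrand \emph{decreases} as $s\downarrow 0$ (it is increasing in $s$, as you correctly state earlier), so monotone convergence from below does not apply; the decreasing version needs exactly the integrable majorant you already constructed, so your main dominated-convergence argument is unaffected.
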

%%%%%%%%%%%%%%%%%%%%%%%%%%%%%%%%%%%%%%%%
%%%%%%%%%%%%%%%%%%%%%%%%%%%%%%%%%%%%%%%%
%%%%%%%%%%%%%%%%%%%%%%%%%%%%%%%%%%%%%%%%  
\begin{proof}
In view of the definition of $\hat F^s$ in \eqref{reprs} it is enough to show
\begin{eqnarray}\label{qls1}
\lim_{s\to 0^+}G^s_1(u)=G^0_1(u)\,,\\ \label{qls2}
\lim_{s\to 0^+}J^s_1(u)=J^0_1(u)\,.
\end{eqnarray}
We start by proving \eqref{qls1}.
To this end, we note that, since $\tilde u\in \CC^\infty(\R^d)$, for every $x, \, y\in \R^d$ we have 
\begin{equation*}%\label{taylor}
|\tilde u(y)-\tilde u(x)|^2\le \|\nabla \tilde u\|_{L^\infty}^2 |x-y|^2 \, .
\end{equation*}

Let $U\subset \R^d$ be an open set such that $\mathrm{dist}(\Omega,\R^d\setminus U)>1$ and let  $\ep\in(0,1)$; we have

\begin{equation*}
\begin{aligned}
|G^s_1(u)-G^0_1(u)|\le  
&\frac 1 2 \int_{U}\ud x\int_{B_\ep(x)}|\tilde u(x)-\tilde u(y)|^2\bigg|\frac{1}{|x-y|^{d+2s}}-\frac{1}{|x-y|^d}\bigg|\ud y\\
&+\frac{1}{2}\iint_{\B_1\setminus \overline\B_\ep}|\tilde u(x)-\tilde u(y)|^2\bigg|\frac{1}{|x-y|^{d+2s}}-\frac{1}{|x-y|^d}\bigg|\ud y\ud x.
\end{aligned}
\end{equation*}
By Dominated Convergence Theorem the second addend in the righthand side tends to zero (for fixed $\ep$) as $s\to 0^+$, while the first addend is bounded from above by 
$|U|\|\nabla\tilde u\|^2_{L^\infty}\int_{B_\ep}\frac{1}{|z|^{d+2s-2}}\ud z$, which tends to zero as $\ep\to 0^+$. This clearly yields \eqref{qls1}.

Finally, \eqref{qls2} is a trivial consequence of the Dominated Convergence Theorem, once noticed that
$$
J^s_1(u)=-\int_{\Omega}u(x)\int_{\Omega\setminus \overline{B}_1(x)}\frac{u(y)}{|x-y|^{d+2s}}\ud y\ud x\,.
$$
\end{proof}
%%%%%%%%%%%%%%%%%%%%%%%%%%%%%%%%%%%%%%%%
%%%%%%%%%%%%%%%%%%%%%%%%%%%%%%%%%%%%%%%%
%%%%%%%%%%%%%%%%%%%%%%%%%%%%%%%%%%%%%%%% 
\begin{lemma}[Density of smooth functions]\label{lm:density}
For every $u\in \mathcal{H}_0^0(\Omega)$ there exists $\{u_k\}_{k\in\N}\subset \CC^\infty_{\cc}(\Omega)$ such that $u_k\to u$ (strongly) in $L^2$
and
\begin{equation*}%\label{almlimsup}
\lim_{k\to +\infty}J^0_1(u_k)=J^0_1(u)\qquad\textrm{and}\qquad\lim_{k\to +\infty} G^0_1(u_k)= G_1^0(u).
\end{equation*}
\end{lemma}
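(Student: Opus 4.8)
The plan is to approximate $u\in\mathcal{H}_0^0(\Omega)$ in two stages: first by bounded functions with compact support in $\Omega$, then by mollification. The key point to watch is that we need simultaneous convergence of the $L^2$ norm, of $G^0_1$ (the \emph{local} part, i.e.\ the integral over $\B_1$), and of $J^0_1$ (the \emph{nonlocal} part). The nonlocal part $J^0_1$ is the easy one: by Remark~\ref{DCT} the functionals $J^0_1$ are continuous with respect to strong $L^2$ convergence, so as soon as we produce $u_k\to u$ in $L^2$ the convergence $J^0_1(u_k)\to J^0_1(u)$ is automatic. Hence the whole difficulty is concentrated in arranging $G^0_1(u_k)\to G^0_1(u)$ while keeping $u_k\in\CC^\infty_\cc(\Omega)$.

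First I would reduce to the case of bounded $u$ with compactly contained support. Given $u\in\mathcal{H}_0^0(\Omega)$, consider the truncations $u^{(T)}:=(u\wedge T)\vee(-T)$ and then multiply by a cutoff that is $1$ on a slightly smaller set; one checks $u^{(T)}\to u$ in $L^2$ by dominated convergence, and, crucially, $G^0_1(u^{(T)})\le G^0_1(u)$ since truncation is $1$-Lipschitz (it does not increase $|\tilde u(x)-\tilde u(y)|$), together with $G^0_1(u^{(T)})\to G^0_1(u)$ by Fatou in one direction and the monotone/dominated convergence of the integrand in the other (the pointwise limit of $|\tilde u^{(T)}(x)-\tilde u^{(T)}(y)|^2/|x-y|^d$ is the integrand for $u$, dominated by it). The cutoff near $\partial\Omega$ introduces only a controlled error because the kernel $|z|^{-d}\chi_{|z|<1}$ is integrable away from the diagonal and $u\in L^2$; alternatively one works on $\Omega'\Subset\Omega$ and sends $\Omega'\uparrow\Omega$ at the end. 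So we may assume $u\in L^\infty(\Omega)$ with $\supp u\Subset\Omega$.

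Next I would mollify: set $u_k:=u*\rho_k$ with a standard mollifier $\rho_k$, so $u_k\in\CC^\infty_\cc(\Omega)$ for $k$ large (the support grows by $1/k$, still inside $\Omega$), and $u_k\to u$ in $L^2$. For the convergence $G^0_1(u_k)\to G^0_1(u)$ I would write the bilinear form
$$
2G^0_1(v)=\iint_{\B_1}\frac{|\tilde v(x)-\tilde v(y)|^2}{|x-y|^d}\ud y\ud x=\int_{|z|<1}\frac{1}{|z|^d}\Big(\int_{\R^d}|\tilde v(x+z)-\tilde v(x)|^2\ud x\Big)\ud z,
$$
and observe that the inner quantity $g_v(z):=\|\tilde v(\cdot+z)-\tilde v\|_{L^2(\R^d)}^2$ satisfies $g_{u_k}(z)\le g_u*\tilde\rho_k$-type bounds by Jensen/Minkowski: $\|\tilde u_k(\cdot+z)-\tilde u_k\|_{L^2}\le\|\tilde u(\cdot+z)-\tilde u\|_{L^2}$ convolved against $\rho_k$, hence $g_{u_k}(z)\le (g_u^{1/2}*\rho_k)^2(z)$ — actually more simply $g_{u_k}\le g_u$ is false, so instead I use that $g_{u_k}(z)\to g_u(z)$ for every $z$ (continuity of translation in $L^2$ plus $u_k\to u$ in $L^2$) and that $g_{u_k}(z)\le 2\sup_k\|u_k\|_{L^2}^2\le C$ uniformly, while also $g_{u_k}(z)\le \|z\|^2\|\nabla u_k\|_{L^2}^2$; to get a $z$-integrable dominating function near $0$ independent of $k$, I bound $g_{u_k}(z)$ using the $\mathcal{H}_0^0$ bound: $\int_{|z|<1}|z|^{-d}g_{u_k}(z)\ud z=2G^0_1(u_k)$, and by the Lipschitz/convexity argument as above (mollification is an average of translations, which does not increase the convex functional $v\mapsto G^0_1(v)$) we get $G^0_1(u_k)\le G^0_1(u)$. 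This domination $|z|^{-d}g_{u_k}(z)\le|z|^{-d}g_u(z)\in L^1(\{|z|<1\})$, combined with pointwise convergence $g_{u_k}(z)\to g_u(z)$, yields $G^0_1(u_k)\to G^0_1(u)$ by dominated convergence.

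The main obstacle is exactly this domination step: one must verify that $G^0_1$ is not increased under mollification, i.e.\ $G^0_1(u*\rho_k)\le G^0_1(u)$. This follows because $G^0_1$ is a nonnegative quadratic form and $u*\rho_k=\int u(\cdot-y)\rho_k(y)\ud y$ is a convex combination (average) of translates $u(\cdot-y)$, each of which has the same $G^0_1$ value as $u$ (translation invariance, using that the support stays inside $\Omega$ so the zero-extension commutes with translation on the relevant region — here one must be slightly careful near $\partial\Omega$, which is why the previous reduction to $\supp u\Subset\Omega$ matters), so by convexity (Jensen for the quadratic form, or simply expanding the square and using Minkowski's integral inequality on $\|\tilde u_k(\cdot+z)-\tilde u_k\|_{L^2}$) we get the inequality. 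Once this is in place, dominated convergence finishes $G^0_1(u_k)\to G^0_1(u)$, continuity of $J^0_1$ finishes $J^0_1(u_k)\to J^0_1(u)$, and a final diagonal argument across the two approximation stages produces the desired sequence.
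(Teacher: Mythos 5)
Your mollification stage is sound and is essentially the paper's second step done in more detail: once $\supp u\Subset\Omega$, the identity $\widetilde{u\ast\rho_k}=\tilde u\ast\rho_k$ holds for large $k$, Young's inequality gives $\|\tilde u_k(\cdot+z)-\tilde u_k\|_{L^2}\le\|\tilde u(\cdot+z)-\tilde u\|_{L^2}$ for every $z$, hence the domination $|z|^{-d}g_{u_k}(z)\le|z|^{-d}g_u(z)$ that you need for dominated convergence, and $J^0_1$ is handled by Remark~\ref{DCT}. The gap is in your first reduction. For the borderline kernel $|z|^{-d}$, which is \emph{not} integrable near the diagonal, multiplying by a cutoff $\chi_\delta$ that drops from $1$ to $0$ in a collar of width $\delta$ around $\partial\Omega$ is not a controlled operation: the cross term in
\begin{equation*}
\iint_{\B_1}\frac{|u(y)|^2\,|\chi_\delta(x)-\chi_\delta(y)|^2}{|x-y|^{d}}\ud y\ud x
\end{equation*}
is of order $\int_{\{\mathrm{dist}(y,\partial\Omega)<C\delta\}}|u(y)|^2\log(1/\delta)\ud y$ (Lipschitz constant $\delta^{-1}$ on $|x-y|<\delta$, plus the logarithmic tail on $\delta<|x-y|<1$), and for $u$ merely in $L^2$ this need not vanish as $\delta\to0$; one would need a Hardy-type inequality for the $0$-Gagliardo space, which is neither available in the paper nor obvious. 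Your fallback ``work on $\Omega'\Subset\Omega$ and let $\Omega'\uparrow\Omega$'' is a sharp cutoff and suffers from the same logarithmic loss. Your remark that the kernel is integrable \emph{away} from the diagonal does not help, because the uncontrolled contribution lives precisely near the diagonal.

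The fix is the one the paper (following McLean) uses: after a partition of unity with \emph{fixed} cutoffs (whose Lipschitz constants do not degenerate, so the error they introduce is bounded by $C\|u\|_{L^2}^2\int_{B_1}|z|^{2-d}\ud z<\infty$ and in fact is handled exactly by the bilinear expansion), reduce to $\Omega$ a subgraph and replace the shrinking cutoff by the inward translation $u_\delta(x):=u(x',x_n+\delta)$. Then $\tilde u_\delta$ is an exact translate of $\tilde u$, so $\supp u_\delta\Subset\Omega$, $u_\delta\to u$ in $L^2$ by continuity of translations, and both $G^0_1$ and $J^0_1$ are \emph{invariant} (not merely continuous) under this operation, since they are double integrals over translation-invariant subsets of $\R^{2d}$. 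With that replacement your truncation becomes unnecessary and the rest of your argument goes through verbatim.
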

%%%%%%%%%%%%%%%%%%%%%%%%%%%%%%%%%%%%%%%%
%%%%%%%%%%%%%%%%%%%%%%%%%%%%%%%%%%%%%%%%
%%%%%%%%%%%%%%%%%%%%%%%%%%%%%%%%%%%%%%%%  
\begin{proof}
 This result is proven in \cite[Theorem~3.29]{McLean}, for domains with a continuous boundary. For the reader's convenience we recall the sketch of the proof: Up to a partition of the unity argument, one may assume $\Omega$ to be the subgraph of a continuous function:
thus it is enough to approximate first with $u_\delta(x):=u(x', x_n+\delta)$, for small $\delta$, whose support is well contained in $\Omega$, and then to take $u_\delta \ast \phi_\varepsilon$, for a family of mollifiers $\{\phi_\varepsilon\}_\varepsilon$ and small $\varepsilon$. 
\end{proof}
%%%%%%%%%%%%%%%%%%%%%%%%%%%%%%%%%%%%%%%%
%%%%%%%%%%%%%%%%%%%%%%%%%%%%%%%%%%%%%%%%
%%%%%%%%%%%%%%%%%%%%%%%%%%%%%%%%%%%%%%%%
The limsup inequality in Theorem~\ref{thm:stozerofo} follows directly from the density proved above.
\begin{proof}[Proof of Theorem \ref{thm:stozerofo}(iii)]
Let $u\in \mathcal{H}^0(\Omega)$\,. By Lemma \ref{lm:density} there exists a sequence of functions  $\{u^k\}_{k\in\N}\subset\CC^\infty_\cc(\R^d)$ such that $u^k\to u$ in $L^2$ and
$$
\limsup_{k\to +\infty}\hat F^0(u^k)= \hat F^0(u)\,.
$$
In view of Lemma \ref{ficc} we have 
$$
\lim_{n\to +\infty}\hat F^{s_n}(u^k)=\hat F^{0}(u^k)\qquad\textrm{for every }k\in\N\,.
$$
Therefore, by a standard diagonal argument, there exists a sequence $\{u^n\}_{n\in\N}\subset \CC^{\infty}_\cc(\R^d)$ with $u^n=u^{k(n)}$ for every $n\in\N$ satisfying the desired properties.  
\end{proof}

%%%%%%%%%%%%%%%%%%%%%%%%%%%%%%%%%%%%%%%%%%
%%%%%%%%%%%%%%%%%%%%%%%%%%%%%%%%%%%%%%%%%%
%%%%%%%%%%%%%%%%%%%%%%%%%%%%%%%%%%%%%%%%%%
%%%%%%%%%%%%%%%%%%%%%%%%%%%%%%%%%%%%%%%%%%
%%%%%%%%%%%%%%%%%%%%%%%%%%%%%%%%%%%%%%%%%%
%%%%%%%%%%%%%%%%%%%%%%%%%%%%%%%%%%%%%%%%%%

\section{$\Gamma$-convergence of the squared $s$-Gagliardo seminorms as $s \rightarrow 1^-$}\label{Sec2}
Here we study the $\Gamma$-convergence of the functionals $(1-s)F^s$ as $s \rightarrow 1^-$, where $F^s$ is defined in \eqref{defFs}. 
The candidate $\Gamma$-limit is the functional $F^1: L^2(\Omega) \rightarrow \mathbb{R} \cup \{+ \infty\}$ defined by
\begin{equation*}%\label{gammalims1}
F^1(u):=
\left\{
\begin{aligned}
&\frac{\omega_{d}}{4} \int_{\Omega} \vert \nabla u(x) \vert^2 \ud x& \text{ if }  u \in H_0^1(\Omega) \, , \\
& +\infty & \textrm{elsewhere in }L^2(\Omega)\,.% \text{ for } u \in L^2(\Omega) \setminus H_0^1(\Omega) \, . 
\end{aligned}
\right.
\end{equation*}
%%%%%%%%%%%%%%%%%%%%%%%%%%%%%
%%%%%%%%%%%%%%%%%%%%%%%%%%%%%
%%%%%%%%%%%%%%%%%%%%%%%%%%%%%
\begin{theorem}\label{thmGammaconvs1}
	Let $\{s_n\}_{n \in \mathbb{N}} \subset (0,1)$ be such that $s_n \rightarrow 1^-$ as $ n \to + \infty$. The following $\Gamma$-convergence result holds true.
	\begin{itemize}
		%%%%%%%%%%%%%%%%%%%%%%%%%%%%%%%%%%%%%%%%
		\item[(i)] (Compactness) Let $\{u^n\}_{n\in\N}\subset L^2(\Omega)$ be such that
		\begin{equation}\label{COMP3}
		\sup_{n \in \mathbb{N}} (1-s_n)F^{s_n}(u^n)+ \| u^n \|_{L^2(\Omega)}^2\le M,
		\end{equation} 
		for some constant $M$ independent of $n$\,. Then, up to a subsequence, $u^n\to u$ strongly in $L^2(\Omega)$ for some $u\in H_0^1(\Omega)$\,.
		%%%%%%%%%%%%%%%%%%%%%%%%%%%%%%%%%%%%%%%%
		\item[(ii)] ($\Gamma$-liminf inequality) For every $u\in L^2(\Omega)$ and for every $\{u^n\}_{n\in\N}\subset L^2(\Omega)$ with $u^n\to u$ in $L^2(\Omega)$\,, it holds
		\begin{equation*}%\label{linfsto1}
		 F^1(u)\le\liminf_{n\to +\infty}(1-s_n) F^{s_n}(u^n)\,.
		\end{equation*}
		%%%%%%%%%%%%%%%%%%%%%%%%%%%%%%%%%%%%%%%%
		\item[(iii)] ($\Gamma$-limsup inequality) For every $u\in L^2(\Omega)$ there exists $\{u^n\}_{n\in\N}\subset L^2(\Omega)$ with $u^n\to u$ in $L^2(\Omega)$ such that
		\begin{equation}\label{lsupsto1}
		 F^1(u)=\lim_{n\to +\infty} (1-s_n)F^{s_n}(u^n)\,.
		\end{equation}
	\end{itemize}
	\end{theorem}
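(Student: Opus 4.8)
The plan is to prove Theorem~\ref{thmGammaconvs1} by reduction to the classical Bourgain--Brezis--Mironescu result, adapting the extension-by-zero setting to obtain the $\Gamma$-convergence (rather than merely pointwise) statement, and crucially using that Dirichlet boundary conditions are encoded by the vanishing of $\tilde u$ outside $\Omega$.

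\textbf{Compactness (i).} I would invoke the BBM compactness criterion: if $(1-s_n)[\tilde u^n]_{s_n}^2$ is bounded together with $\|u^n\|_{L^2}$, then $\{\tilde u^n\}$ is precompact in $L^2_{\mathrm{loc}}(\R^d)$ and every $L^2$-limit lies in $H^1(\R^d)$ with $\|\nabla \tilde u\|_{L^2}^2 \le \tfrac{1}{\omega_d} \liminf (1-s_n)F^{s_n}(u^n)$ (this is exactly the functional in $F^1$ after accounting for the normalization constant $K_{2,d}$ in BBM, which for $p=2$ equals a multiple of $\omega_d$; I would fix the constant once and for all by testing on a fixed smooth function). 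Since $\tilde u^n = 0$ on $\R^d\setminus\Omega$ and this property is stable under $L^2$ convergence, the limit $\tilde u$ vanishes outside $\Omega$; combined with $\tilde u\in H^1(\R^d)$ and the Lipschitz regularity of $\partial\Omega$, this gives $u = \tilde u|_\Omega \in H^1_0(\Omega)$. Precompactness in $L^2(\Omega)$ is immediate since all functions are supported in the bounded set $\Omega$.

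\textbf{$\Gamma$-liminf (ii).} Given $u^n\to u$ in $L^2(\Omega)$, if $\liminf (1-s_n)F^{s_n}(u^n) = +\infty$ there is nothing to prove; otherwise pass to a subsequence attaining the liminf and with the quantity bounded, apply part (i) to identify $u\in H^1_0(\Omega)$ and the same BBM lower-semicontinuity estimate to conclude $F^1(u) = \tfrac{\omega_d}{4}\|\nabla u\|_{L^2}^2 \le \liminf (1-s_n) F^{s_n}(u^n)$. The only care needed is that the BBM liminf inequality is usually stated for a fixed function or for sequences converging in $L^p$; the sequential version with the extension operator is standard but I would spell out that $[\tilde u^n]_{s_n}$ controls the BBM difference quotients on all of $\R^d$, so no boundary contribution is lost.

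\textbf{$\Gamma$-limsup (iii).} By density of $C^\infty_c(\Omega)$ in $H^1_0(\Omega)$ and lower semicontinuity, together with a diagonal argument, it suffices to build a recovery sequence for $u\in C^\infty_c(\Omega)$; for such $u$ one takes the constant sequence $u^n\equiv u$ and uses the BBM pointwise limit $\lim_{s\to 1^-}(1-s)[\tilde u]_s^2 = \tfrac{\omega_d}{4}\|\nabla u\|_{L^2(\R^d)}^2 = \tfrac{\omega_d}{4}\|\nabla u\|_{L^2(\Omega)}^2$ (the last equality because $u$ is compactly supported in $\Omega$). For general $u\in L^2(\Omega)\setminus H^1_0(\Omega)$ the inequality $F^1(u)=+\infty\ge\limsup$ is vacuous, so there is nothing to do. The diagonal extraction: pick $u_k\in C^\infty_c(\Omega)$ with $u_k\to u$ in $L^2$ and $\tfrac{\omega_d}{4}\|\nabla u_k\|_{L^2}^2\to F^1(u)$, then for each $k$ choose $n$ large so that $(1-s_n)F^{s_n}(u_k)$ is within $1/k$ of $\tfrac{\omega_d}{4}\|\nabla u_k\|^2_{L^2}$, and set $u^n := u_{k(n)}$ along a suitable increasing choice of indices.

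\textbf{Main obstacle.} The only genuinely delicate point is the matching of constants and the handling of the boundary: I must ensure that working with $\tilde u$ on $\R^d$ (rather than with the intrinsic seminorm on $\Omega$) is exactly what reproduces the $H^1_0$ energy with the stated constant $\tfrac{\omega_d}{4}$, and that the BBM dimensional constant $K_{2,d}$ indeed equals $\tfrac{\omega_d}{4}$ up to the factor already absorbed in the definition of $[\cdot]_s$ (no extra $\tfrac12$ or surface-measure factors slip in). Everything else is a routine repackaging of Bourgain--Brezis--Mironescu plus a diagonal argument.
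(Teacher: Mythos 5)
Your outline is correct in spirit and, for part (iii), coincides with the paper's proof (density of $\CC^\infty_\cc(\Omega)$ in $H^1_0(\Omega)$, the pointwise limit of Theorem~\ref{teocitaziones1limsup}, and a diagonal argument). For parts (i) and (ii), however, you take a genuinely different route: you outsource the analysis to the classical compactness and lower-semicontinuity theorems for the Bourgain--Brezis--Mironescu functionals, applied to the extensions $\tilde u^n$ on all of $\R^d$, whereas the paper gives a self-contained proof. Concretely, for (i) the paper does not cite a BBM-type compactness criterion but reproves it following \cite{ADPM}: it establishes the shift estimate of Proposition~\ref{prop2enunciato} (via a mollification decomposition and the Hardy-type inequality of Lemma~\ref{Hardytipodis}), deduces the uniform modulus $\|\tau_h\tilde u^n-\tilde u^n\|_{L^2}\le C|h|^{s_n}$, and then applies Fr\'echet--Kolmogorov (Theorem~\ref{frechetkolmthm}) together with the characterization of $H^1$ by shifts (Theorem~\ref{carsobvshift}); your identification of the limit as an element of $H^1_0(\Omega)$ (vanishing outside $\Omega$ plus Lipschitz boundary) is exactly the paper's. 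For (ii) the paper reruns the BBM lower-bound argument from scratch: a Jensen inequality showing that mollification decreases the energy, followed by a Taylor expansion and integration over spheres yielding the weighted Dirichlet integral, then limits in $\ep$ and $R$. Your approach buys brevity and modularity at the price of leaning on external theorems; the paper's buys self-containedness and, importantly, explicit control of the constant.

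Two points in your plan carry real weight and must be resolved for the argument to close. First, the sequential $\Gamma$-liminf with the \emph{sharp} constant is not a consequence of the pointwise (fixed-$u$) BBM limit, and it cannot be "fixed by testing on a smooth function": lower semicontinuity along sequences is exactly the nontrivial content of the paper's Claims 1--2 (or, if you prefer to cite, of the $\Gamma$-convergence results in \cite{P}); make sure the version you invoke carries the optimal constant, not merely a dimensional one. Second, your constant bookkeeping is indeed off as written: with $F^{s}(u)=[\tilde u]_{s}^2$ equal to the \emph{full} double integral, Theorem~\ref{teocitaziones1limsup} gives $\lim_{s\to1^-}(1-s)[\tilde u]_{s}^2=\tfrac{\omega_d}{2}\int|\nabla \tilde u|^2$, not $\tfrac{\omega_d}{4}\int|\nabla\tilde u|^2$ as you state (the paper itself silently inserts a factor $\tfrac12$ in front of the double integral in its limsup proof, so this discrepancy is present in the source as well); whichever normalization you adopt, it must be used consistently in both the liminf and the limsup.
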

\subsection{Proof of Compactness}
This subsection is devoted to the proof of Theorem \ref{thmGammaconvs1}(i). 
To accomplish this task, we adopt the strategy in \cite{ADPM} adapting it to our case.
%We recall the following classical compactness theorem in $L^2(\mathbb{R}^d)$.
To this purpose, for every function $v\in L^2(\Omega)$ and for every $h\in\R^d$  we denote by $\tau_hv$ the shift of $v$ by $h$,   defined by $\tau_h v (\cdot):= v(\cdot+h)$\,. 
We recall the following two classical results.
%%%%%%%%%%%%%%%%%%%%%%%%%%%%%%%
%%%%%%%%%%%%%%%%%%%%%%%%%%%%%%%
%%%%%%%%%%%%%%%%%%%%%%%%%%%%%%%
\begin{theorem}[Fr\'echet-Kolmogorov]\label{frechetkolmthm}
	Let $\{v^n\}_{n \in \mathbb{N}}\subset L^2(\R^d)$ be such tha $\sup_{n\in\N}\|v^n\|_{L^2(\R^d)}\le M$\,, for some constant $M$ independent of $n$\,.
	If
	$$
	 \lim_{\vert h \vert \rightarrow 0^+} \sup_{n \in \mathbb{N}} \| \tau_{h}v^n-v^n \|_{L^2(\mathbb{R}^d)}=0\,,
	 $$
	then $ \{v^n\}_{n \in \mathbb{N}}$ is pre-compact in $L_{\mathrm{loc}}^2(\mathbb{R}^d)$\,. 
\end{theorem}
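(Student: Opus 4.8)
The plan is to obtain this classical compactness statement from a mollification argument combined with the Arzel\`a--Ascoli theorem, localized on balls and completed by a diagonal extraction.

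First I would fix a standard mollifier $\rho\in\CC^\infty_\cc(\R^d)$ with $\rho\ge0$, $\supp\rho\subset B_1$ and $\int_{\R^d}\rho=1$, and set $\rho_\ep(z):=\ep^{-d}\rho(z/\ep)$. The crucial preliminary estimate is that the mollification error is small \emph{uniformly in $n$}: since $v^n*\rho_\ep(x)-v^n(x)=\int_{B_\ep}\rho_\ep(z)\big(v^n(x-z)-v^n(x)\big)\ud z$, Minkowski's integral inequality (equivalently Jensen's inequality with respect to the probability measure $\rho_\ep(z)\ud z$) yields
$$
\|v^n*\rho_\ep-v^n\|_{L^2(\R^d)}\le\int_{B_\ep}\rho_\ep(z)\,\|\tau_{-z}v^n-v^n\|_{L^2(\R^d)}\ud z\le\sup_{|h|\le\ep}\|\tau_h v^n-v^n\|_{L^2(\R^d)}.
$$
Setting $\eta(\ep):=\sup_{n\in\N}\sup_{|h|\le\ep}\|\tau_h v^n-v^n\|_{L^2(\R^d)}$, the hypothesis gives $\eta(\ep)\to0$ as $\ep\to0^+$, and $\|v^n*\rho_\ep-v^n\|_{L^2(\R^d)}\le\eta(\ep)$ for every $n$.

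Next, for a fixed $R>0$ and a fixed $\ep>0$, I would show that the mollified family $\{v^n*\rho_\ep\}_{n\in\N}$ is precompact in $L^2(B_R)$: by Cauchy--Schwarz $|v^n*\rho_\ep(x)|\le\|\rho_\ep\|_{L^2(\R^d)}\|v^n\|_{L^2(\R^d)}\le M\|\rho_\ep\|_{L^2(\R^d)}$ and $|\nabla(v^n*\rho_\ep)(x)|=|v^n*\nabla\rho_\ep(x)|\le M\|\nabla\rho_\ep\|_{L^2(\R^d)}$, so the family is uniformly bounded and equi-Lipschitz on $\overline{B_R}$, hence precompact in $C(\overline{B_R})$ by Arzel\`a--Ascoli, and a fortiori in $L^2(B_R)$. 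Combining this with the previous step gives total boundedness of $\{v^n\}_{n\in\N}$ in $L^2(B_R)$: given $\delta>0$, pick $\ep$ with $\eta(\ep)<\delta/2$, take a finite $\tfrac\delta2$-net for $\{v^n*\rho_\ep\}_{n\in\N}$ in $L^2(B_R)$, and observe that each $v^n$ restricted to $B_R$ lies within $\tfrac\delta2$ of $v^n*\rho_\ep$. Since $L^2(B_R)$ is complete, $\{v^n\}_{n\in\N}$ is precompact in $L^2(B_R)$ for every $R$, and a standard diagonal argument over $R=1,2,3,\dots$ produces a subsequence converging in $L^2(B_R)$ for all $R$, i.e.\ in $L^2_{\mathrm{loc}}(\R^d)$.

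I expect the only step genuinely requiring care to be the uniform-in-$n$ smallness of the mollification error derived above; everything downstream is the routine Arzel\`a--Ascoli / total-boundedness machinery. It is worth stressing that the conclusion cannot be upgraded to precompactness in $L^2(\R^d)$ without an additional tightness (no-escape-of-mass) hypothesis such as $\lim_{R\to+\infty}\sup_{n}\|v^n\|_{L^2(\R^d\setminus B_R)}=0$, since all the assumptions are invariant under translating the $v^n$ off to infinity.
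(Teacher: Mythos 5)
Your proof is correct, but note that the paper offers no proof of this statement to compare against: Theorem~\ref{frechetkolmthm} is recalled there as a classical result and used as a black box in the proof of Theorem~\ref{thmGammaconvs1}(i). Your mollification-plus-Arzel\`a--Ascoli argument is the standard proof and is sound as written: the uniform-in-$n$ bound $\|v^n\ast\rho_\ep-v^n\|_{L^2(\R^d)}\le\sup_{|h|\le\ep}\|\tau_h v^n-v^n\|_{L^2(\R^d)}$ obtained by Jensen/Minkowski is indeed the only delicate point, the equi-Lipschitz bound on $v^n\ast\rho_\ep$ and the total-boundedness/diagonal steps are routine, and your closing remark that compactness can only be local without an additional tightness hypothesis is precisely why the statement is formulated in $L^2_{\mathrm{loc}}(\R^d)$.
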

%%%%%%%%%%%%%%%%%%%%%%%%%%%%%%%
%%%%%%%%%%%%%%%%%%%%%%%%%%%%%%%
%%%%%%%%%%%%%%%%%%%%%%%%%%%%%%%
%We recall the characterization of $H^1(\mathbb{R}^d)$ function by the shift.
\begin{theorem}\label{carsobvshift}
	Let $v \in L^2(\mathbb{R}^d)$. Then $v \in H^1(\mathbb{R}^d)$ if and only if there exists $C>0$ such that 
	%for all $\Omega' \subset \subset \mathbb{R}^d$ and for all $h \in \mathbb{R}^d$, it hold 
%	$$ 
%	\| \tau_{h}v-v \|_{L^2(\Omega')} \leq C \vert h \vert\quad\textrm{for every open bounded set }\Omega'\subset\R^d\textrm{ and for every }h\in\R^d\,.
%	$$
	$$ 
\| \tau_{h}v-v \|_{L^2(\R^d)} \leq C \vert h \vert\quad \textrm{  for every }h\in\R^d\,.
$$

\end{theorem}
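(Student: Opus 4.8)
This is a classical characterization of $H^1$ via difference quotients; my plan is to prove it through the Fourier transform, which handles both implications in a symmetric way. Write $\widehat v$ for the Fourier transform of $v$, normalized so that Plancherel's identity $\|\widehat v\|_{L^2(\R^d)}=\|v\|_{L^2(\R^d)}$ holds, and recall that $v\in H^1(\R^d)$ is equivalent to $\xi\mapsto|\xi|\widehat v(\xi)\in L^2(\R^d)$, with $\|\nabla v\|_{L^2(\R^d)}^2=\int_{\R^d}|\xi|^2|\widehat v(\xi)|^2\ud\xi$. Since $\widehat{\tau_h v}(\xi)=\mathrm{e}^{\mathrm{i}h\cdot\xi}\widehat v(\xi)$, applying Plancherel's identity to $\tau_h v-v$ gives, for every $h\in\R^d$,
\begin{equation*}
\|\tau_h v-v\|_{L^2(\R^d)}^2=\int_{\R^d}\big|\mathrm{e}^{\mathrm{i}h\cdot\xi}-1\big|^2\,|\widehat v(\xi)|^2\ud\xi\,,
\end{equation*}
and this single identity will drive both parts of the proof.

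For the forward implication ($v\in H^1(\R^d)$ implies the translation estimate), I would use the elementary bound $|\mathrm{e}^{\mathrm{i}t}-1|\le|t|$ for $t\in\R$, which yields $|\mathrm{e}^{\mathrm{i}h\cdot\xi}-1|\le|h\cdot\xi|\le|h|\,|\xi|$; inserting this into the identity above gives at once $\|\tau_h v-v\|_{L^2(\R^d)}^2\le|h|^2\int_{\R^d}|\xi|^2|\widehat v(\xi)|^2\ud\xi=|h|^2\|\nabla v\|_{L^2(\R^d)}^2$, so the estimate holds with $C=\|\nabla v\|_{L^2(\R^d)}$. (Alternatively, one could first prove this for $v\in C^\infty_{\mathrm c}(\R^d)$ by writing $\tau_h v-v=\int_0^1\nabla v(\cdot+th)\cdot h\,\ud t$ and applying Minkowski's integral inequality, then pass to general $v\in H^1$ by density.)

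For the converse, assuming $\|\tau_h v-v\|_{L^2(\R^d)}\le C|h|$ for all $h\in\R^d$, I would fix a unit vector $e\in\R^d$, set $h=te$ with $t>0$ in the identity, and divide by $t^2$, obtaining $\int_{\R^d}t^{-2}\big|\mathrm{e}^{\mathrm{i}t\,e\cdot\xi}-1\big|^2|\widehat v(\xi)|^2\ud\xi\le C^2$. Letting $t\to 0^+$, the integrand converges pointwise to $(e\cdot\xi)^2|\widehat v(\xi)|^2$ and is nonnegative, so Fatou's lemma gives $\int_{\R^d}(e\cdot\xi)^2|\widehat v(\xi)|^2\ud\xi\le C^2$. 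Taking $e$ to range over the canonical basis of $\R^d$ and summing yields $\int_{\R^d}|\xi|^2|\widehat v(\xi)|^2\ud\xi\le dC^2<+\infty$, hence $v\in H^1(\R^d)$.

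There is no serious obstacle here; the only point requiring a little care — and hence the "hard part" — is the passage to the limit $t\to 0^+$ in the converse, where no dominating function is available, so one must rely on pointwise convergence together with nonnegativity and invoke Fatou's lemma rather than dominated convergence. If one prefers to avoid the Fourier transform altogether, an equivalent route to the converse is to observe that the difference quotients $t^{-1}(\tau_{te_j}v-v)$ are bounded in $L^2(\R^d)$ by $C$, extract a weak $L^2$ limit $g_j$ along a sequence $t\to 0^+$, and identify $g_j$ with the distributional derivative $\partial_j v$ by testing against $\phi\in C^\infty_{\mathrm c}(\R^d)$ and passing to the limit in $\int_{\R^d} v\,\tfrac{\phi(\cdot-te_j)-\phi}{t}\ud x=\int_{\R^d}\tfrac{\tau_{te_j}v-v}{t}\,\phi\ud x$; weak lower semicontinuity of the $L^2$ norm then gives $\|\partial_j v\|_{L^2(\R^d)}\le C$, so again $v\in H^1(\R^d)$.
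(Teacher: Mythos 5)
Your argument is correct and complete. Note that the paper does not prove this statement at all: Theorem~\ref{carsobvshift} is merely recalled there as a classical result (alongside Fr\'echet--Kolmogorov) and used as a black box in the proof of Theorem~\ref{thmGammaconvs1}(i), so there is no ``paper proof'' to compare against. Your Fourier-transform route is the standard one and both directions are handled cleanly: the identity $\|\tau_h v-v\|_{L^2}^2=\int_{\R^d}|\mathrm{e}^{\mathrm{i}h\cdot\xi}-1|^2|\widehat v(\xi)|^2\ud\xi$ together with $|\mathrm{e}^{\mathrm{i}t}-1|\le|t|$ gives the forward implication with the sharp constant $C=\|\nabla v\|_{L^2}$, and in the converse you correctly identify the only delicate point, namely that there is no dominating function as $t\to 0^+$, so Fatou (using nonnegativity and the pointwise limit $t^{-2}|\mathrm{e}^{\mathrm{i}t e\cdot\xi}-1|^2\to(e\cdot\xi)^2$) is the right tool; summing over the coordinate directions then yields $\int|\xi|^2|\widehat v|^2\ud\xi\le dC^2$, i.e.\ $v\in H^1(\R^d)$. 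The alternative converse you sketch (weak $L^2$ compactness of difference quotients and identification of the weak limit with the distributional derivative by testing against $\CC^\infty_{\cc}$ functions) is equally valid and has the advantage of generalizing to $L^p$, $1<p<\infty$, where Plancherel is unavailable; either version would serve the paper's purposes, since in the compactness proof the theorem is applied exactly in this form to the limit function $v$.
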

%%%%%%%%%%%%%%%%%%%%%%%%%%%%%%%
%%%%%%%%%%%%%%%%%%%%%%%%%%%%%%%
%%%%%%%%%%%%%%%%%%%%%%%%%%%%%%%
For every $A \subset \mathbb{R}^d$ and for every $t >0$ we define the set
\begin{equation}\label{ingrass}
A_{t}:= \{ x \in \mathbb{R}^d: \; \mathrm{dist}(x,A)<t\}.
\end{equation}
%%%%%%%%%%%%%%%%%%%%%%%%%%%%%%%
%%%%%%%%%%%%%%%%%%%%%%%%%%%%%%%
%%%%%%%%%%%%%%%%%%%%%%%%%%%%%%%
The following result which allows to estimate the $L^2$ distance of a function from its shift has been proven in \cite[Proposition 5]{ADPM} in $L^1$\,; for the sake of completeness,   we state
and prove it also in our case.
%%%%%%%%%%%%%%%%%%%%%%%%%%%%%%%
%%%%%%%%%%%%%%%%%%%%%%%%%%%%%%%
%%%%%%%%%%%%%%%%%%%%%%%%%%%%%%%
\begin{proposition}
There exists a constant $C(d)>0$ such that the following holds true:
for every $v \in L^2(\mathbb{R}^d)$, for every $h \in \mathbb{R}^d$ and for every open bounded set $ \Omega' \subset\mathbb{R}^d$ we have
	\begin{equation}\label{prop1formenunc}
	\| \tau_{h}v-v \|_{L^2(\Omega')}^{2} \leq C(d)\frac{\vert h \vert^{2}}{\rho^{d+2}}\int_{B_\rho}\|\tau_{y}v-v \|_{L^2(\Omega'_{\vert h \vert})}^2\ud y\,\quad\textrm{for every }\rho\in(0,|h|],
	\end{equation}
	with $\Omega'_{\vert h \vert}$ defined as in \eqref{ingrass}.
\end{proposition}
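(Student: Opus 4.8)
The plan is to prove the estimate by averaging the triangle inequality $\|\tau_h v - v\|_{L^2(\Omega')} \le \|\tau_h v - \tau_y v\|_{L^2(\Omega')} + \|\tau_y v - v\|_{L^2(\Omega')}$ over $y$ in a suitable ball, exploiting translation invariance of the Lebesgue measure to rewrite the first term. Fix $h \in \R^d$ and $\rho \in (0,|h|]$. For every $y \in B_\rho$ we have, by the triangle inequality in $L^2(\Omega')$,
\begin{equation*}
\|\tau_h v - v\|_{L^2(\Omega')} \le \|\tau_h v - \tau_y v\|_{L^2(\Omega')} + \|\tau_y v - v\|_{L^2(\Omega')}.
\end{equation*}
For the first summand, since $\tau_h v - \tau_y v = \tau_y(\tau_{h-y} v - v)$, a change of variables gives $\|\tau_h v - \tau_y v\|_{L^2(\Omega')} = \|\tau_{h-y} v - v\|_{L^2(\Omega' - y)} \le \|\tau_{h-y} v - v\|_{L^2(\Omega'_{|h|})}$, where we used that $\Omega' - y \subset \Omega'_{|y|} \subset \Omega'_{|h|}$ because $|y| \le \rho \le |h|$ (recall $\Omega'_{|h|}$ is the $|h|$-neighbourhood of $\Omega'$ from \eqref{ingrass}). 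Similarly $\|\tau_y v - v\|_{L^2(\Omega')} \le \|\tau_y v - v\|_{L^2(\Omega'_{|h|})}$ trivially.

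Next I would square both sides (using $(a+b)^2 \le 2a^2 + 2b^2$) and integrate over $y \in B_\rho$, dividing by $|B_\rho| = \omega_d \rho^d$:
\begin{equation*}
\|\tau_h v - v\|_{L^2(\Omega')}^2 \le \frac{2}{\omega_d \rho^d}\int_{B_\rho}\|\tau_{h-y} v - v\|_{L^2(\Omega'_{|h|})}^2 \ud y + \frac{2}{\omega_d \rho^d}\int_{B_\rho}\|\tau_y v - v\|_{L^2(\Omega'_{|h|})}^2 \ud y.
\end{equation*}
In the first integral, the substitution $z = h - y$ maps $B_\rho$ (i.e. $B_\rho(0)$) onto $B_\rho(h)$; since $\rho \le |h|$, the ball $B_\rho(h)$ is contained in $B_{2|h|}(0)$, but more usefully one observes that for $z \in B_\rho(h)$ one still has the pointwise bound via a further triangle inequality, or — cleaner — one simply enlarges the domain of integration to $B_{2|h|}$. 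Actually the sharpest route keeps $\rho$ in the denominator: both integrands are of the form $\|\tau_w v - v\|_{L^2(\Omega'_{|h|})}^2$ with $w$ ranging over a ball of radius $\rho$ centred either at $0$ or at $h$, and one absorbs the shift of centre by noting $\|\tau_w v - v\| \le \|\tau_{w-h} v - v\|_{L^2(\cdot)} + \|\tau_h v - v\|_{L^2(\cdot)}$ is circular; so instead I would just bound both integrals by $\int_{B_{2|h|}}\|\tau_y v - v\|_{L^2(\Omega'_{|h|})}^2\ud y$ and then, if the stated form with $B_\rho$ and $\rho^{d+2}$ is wanted literally, observe that the true content is the case $\rho = |h|$, while for general $\rho \le |h|$ one uses monotonicity of $\rho \mapsto \rho^{-(d+2)}\int_{B_\rho}(\cdots)$ combined with the fact that $\|\tau_y v - v\|$ is controlled on the larger ball. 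The factor $|h|^2/\rho^{d+2}$ then emerges because $|h|^2 \ge \rho^2$ allows us to trade $\rho^d$ in the denominator (from $1/|B_\rho|$) for $\rho^{d+2}/|h|^2$, folding the constants $2/\omega_d$ and the comparison of balls into $C(d)$.

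The main obstacle — really the only delicate point — is bookkeeping the geometry of the translated domains and the radii: making sure that after the change of variables $z = h-y$ the integration domain is still contained in a ball over which $\|\tau_z v - v\|_{L^2(\Omega'_{|h|})}^2$ appears (no larger neighbourhood of $\Omega'$ is needed, and no larger ball than a dimensional multiple of $B_{|h|}$), and correctly producing the exponent $d+2$ and a constant depending only on $d$. Since everything reduces to translation invariance of Lebesgue measure, inclusions of neighbourhoods $\Omega' - y \subset \Omega'_{|h|}$ for $|y| \le |h|$, and elementary estimates $|B_\rho| = \omega_d \rho^d$, $\rho \le |h|$, the argument is entirely routine once the inclusions are set up; it mirrors \cite[Proposition 5]{ADPM}, the only change being that $L^1$ norms are replaced by squared $L^2$ norms and the triangle inequality is applied before squaring.
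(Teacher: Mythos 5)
There is a genuine gap at the heart of the argument, and you in fact flag it yourself. A single application of the triangle inequality $\| \tau_{h}v-v \| \le \|\tau_h v-\tau_y v\|+\|\tau_y v-v\|$ followed by averaging over $y\in B_\rho$ produces, after the change of variables $z=h-y$, an integral over the ball $B_\rho(h)$, which for $\rho\ll|h|$ is disjoint from $B_\rho(0)$; your attempt to fold it back in is, as you write, circular, and your fallback of enlarging both integrals to $B_{2|h|}$ proves at best a version of the statement with $B_{2|h|}$ in place of $B_\rho$. The remaining claims — that ``the true content is the case $\rho=|h|$'' and that one can recover general $\rho$ by ``monotonicity of $\rho\mapsto\rho^{-(d+2)}\int_{B_\rho}(\cdots)$'' — do not close the gap: no such monotonicity is proved (it is not a pointwise property of the integrand, and establishing it would require essentially the same work as the proposition itself), and the case $\rho\ll|h|$ is precisely the one used downstream. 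Indeed, in Proposition \ref{prop2enunciato} the estimate is multiplied by $\rho^{1-2s}$ and integrated over $\rho\in(0,|h|]$, and it is the small-$\rho$ regime, via Hardy's inequality (Lemma \ref{Hardytipodis}), that generates the fractional kernel $|y|^{-(d+2s)}$; the single case $\rho=|h|$ would be useless there. The hard content of the proposition is exactly that translations by vectors of length at most $\rho$, for arbitrarily small $\rho$, control the translation by the much larger vector $h$, at the quantitative cost $|h|^2/\rho^{d+2}$.

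The paper obtains this by a mollification argument (following \cite[Proposition 5]{ADPM}): fix $\varphi\in \CC^1_{\cc}(B_1)$ with unit integral, split $v=U_\rho+V_\rho$ where $U_\rho(x)=\rho^{-d}\int_{B_\rho}v(x+y)\varphi(y/\rho)\ud y$ is a mollification at scale $\rho$, bound $|V_\rho|^2$ by Jensen's inequality in terms of $\rho^{-d}\int_{B_\rho}|v-\tau_y v|^2\ud y$, and bound $|U_\rho(x+h)-U_\rho(x)|^2\le |h|^2\int_0^1|\D U_\rho(x+th)|^2\ud t$ via the fundamental theorem of calculus; since $\D U_\rho(x)=-\rho^{-(d+1)}\int_{B_\rho}(v(x+y)-v(x))\D\varphi(y/\rho)\ud y$, Cauchy--Schwarz yields the factor $\rho^{-(d+2)}$ and the increment integral over $B_\rho$. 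It is the derivative falling on the cut-off $\varphi$, not a triangle inequality, that converts the long translation $h$ into short translations $y\in B_\rho$. If you want to avoid mollification, a correct alternative would be a chaining argument exploiting the subadditivity $\|\tau_{y+z}v-v\|_{L^2(A)}\le\|\tau_y v-v\|_{L^2(A+z)}+\|\tau_z v-v\|_{L^2(A)}$ iterated roughly $|h|/\rho$ times, but that is a substantially different (and more delicate) argument than the one you sketched.
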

%%%%%%%%%%%%%%%%%%%%%%%%%%%%%%%
%%%%%%%%%%%%%%%%%%%%%%%%%%%%%%%
%%%%%%%%%%%%%%%%%%%%%%%%%%%%%%%
\begin{proof}
The proof closely resembles the one of \cite[Proposition 5]{ADPM}.
	Let $\varphi \in \CC_{\cc}^1(B_1)$ be a fixed function with $\varphi\geq 0$ and $\int_{B_1} \varphi(x)\ud x=1$\,. For every $\rho>0$ we define the functions $U_\rho, V_\rho:\R^d\to \R$ as
\begin{equation*}
	U_\rho(x):=\frac{1}{\rho^d} \int_{B_\rho} v(x+y) \varphi\bigg(\frac{y}{\rho}\bigg)\ud y\,,\qquad V_\rho(x):=\frac{1}{ \rho^d}\int_{B_\rho}(v(x)-v(x+y))\varphi\bigg(\frac{y}{\rho}\bigg)\ud y\,;
\end{equation*}	
clearly, for every $\rho>0$ and for every $x\in\R^d$ 
	\begin{equation*}%\label{prop1form1dim}
	%\begin{split}
	v(x)= U_\rho(x)+V_\rho(x)\,,
	%\end{split}
	\end{equation*}
	and hence
	\begin{equation}\label{aa1}
	|\tau_hv(x)-v(x)|^2\le 3|U_\rho(x+h)-U_\rho(x)|^2+3|V_\rho(x)|^2+3|V_\rho(x+h)|^2\,.
	\end{equation}
	By Jensen inequality, for every $\xi\in\R^d$ we have
	\begin{equation}\label{prop1form2dim}
	\vert V_\rho(\xi) \vert^{2} \leq \frac{\omega_d}{\rho^d}\|\varphi \|^2_{L^{\infty}(B_1)} \int_{B_\rho} \vert v(\xi)-\tau_{y}v(\xi) \vert^2 \ud y.
	\end{equation}
	Moreover, by the change of variable $z=x+y$, we have that
	$$
	U_\rho(x)=\frac{1}{\rho^d}\int_{B_\rho(x)}v(z)\ffi\bigg(\frac{z-x}{\rho}\bigg)\ud z\,,
	$$
	whence we deduce that
	\begin{equation*}
	\begin{split}
	\mathrm{D}\,U_\rho(x)=& -\frac{1}{\rho^{d+1}}\int_{B_\rho(x)} v(z)\mathrm{D}\varphi\bigg(\frac{z-x}{\rho}\bigg)\ud z \\
	=&-\frac{1}{\rho^{d+1}}\int_{B_\rho(x)}(v(z)-v(x))\mathrm{D}\varphi\bigg(\frac{z-x}{\rho}\bigg)\ud z\\
	=&-\frac{1}{\rho^{d+1}}\int_{B_\rho}(v(x+y)-v(x))\mathrm{D}\varphi\bigg(\frac{y}{\rho}\bigg)\ud y\,;
	\end{split}
	\end{equation*}
	therefore, by the fundamental Theorem of Calculus and by Jensen inequality, we obtain 
	\begin{equation}\label{prop1form3dim}
	\begin{split}
	& \vert U_\rho(x+h)- U_\rho(x)\vert^{2} \leq \vert h \vert^{2} \int_{0}^{1} \vert \mathrm{D}\,U_\rho(x+th) \vert^2 \ud t \\
	& \leq \omega_d\frac{\vert h \vert^2}{\rho^{d+2}}\| \mathrm{D}\varphi \|_{L^{\infty}(B_1)}^2\int_{0}^{1} \int_{B_\rho} \vert \tau_yv(x+th)-v(x+th)\vert^2 \ud y\ud t.
	\end{split}
	\end{equation}
	Now, by \eqref{aa1}, \eqref{prop1form2dim}, and \eqref{prop1form3dim}, taking $\rho<|h|$\,, we have
	\begin{equation}\label{aa2}
	\begin{split}
	\vert \tau_hv(x)-v(x)\vert^2 \leq  & 3 \omega_d\frac{\vert h \vert^2}{\rho^{d+2}} \| \mathrm{D} \varphi \|_{\infty}^2 \int_{0}^{1} \int_{B_\rho} \vert \tau_yv(x+th)-v(x+th)\vert^2 \ud y \ud t\\
	&+ 3\omega_d\frac{|h|^2}{\rho^{d+2}}\|\varphi \|_{\infty}^2 \int_{B_\rho}  \vert \tau_yv(x)-v(x)\vert^2 \ud y \\
	&+3\omega_d \frac{\vert h \vert^2}{\rho^{d+2}}\|\varphi \|_{\infty}^2\int_{B_\rho} \vert \tau_y v(x+h)-v(x+h)\vert^{2}\ud y\,.
	\end{split}
	\end{equation}
	Finally, by integrating \eqref{aa2} on $\Omega'$\,, by Fubini theorem, we obtain \eqref{prop1formenunc} with $C(d):=3\omega_d(2 \| \varphi \|_{L^{\infty}(B_1)}^{2}+ \|\mathrm{D} \varphi \|_{L^{\infty}(B_1)}^2)$\,.
\end{proof}
%%%%%%%%%%%%%%%%%%%%%%%%%%%%%%%%%
%%%%%%%%%%%%%%%%%%%%%%%%%%%%%%%%%
%%%%%%%%%%%%%%%%%%%%%%%%%%%%%%%%%
We recall the following version of Hardy's inequality, that is proven in \cite[Proposition 6]{ADPM}.
%%%%%%%%%%%%%%%%%%%%%%%%%%%%%%%%%
%%%%%%%%%%%%%%%%%%%%%%%%%%%%%%%%%
%%%%%%%%%%%%%%%%%%%%%%%%%%%%%%%%%
\begin{lemma}\label{Hardytipodis}
	Let $g: \mathbb{R}\rightarrow [0,+\infty)$ be a Borel measurable function. Then for all $l \geq 0$ we have 
	\begin{equation*}
	\int_{0}^{r} \frac{1}{\rho^{d+l+1}} \int_{0}^{\rho} g(t)\ud t\ud \rho \leq \frac{1}{d+l}\int_{0}^{r} \frac{g(t)}{t^{d+l}}\ud t \quad \textrm{for every } r\geq0.
	\end{equation*}
\end{lemma}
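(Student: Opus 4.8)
The plan is to deduce the inequality from Tonelli's theorem combined with an elementary one-variable computation, using crucially that $g\ge 0$. First I would observe that the iterated integral on the left-hand side is the integral of the nonnegative measurable function $(t,\rho)\mapsto \rho^{-(d+l+1)}g(t)$ over the triangular region $\{(t,\rho):0<t<\rho<r\}$. The case $r=0$ is trivial since both sides vanish, so I may assume $r>0$. By Tonelli's theorem we may exchange the order of integration:
\begin{equation*}
\int_{0}^{r}\frac{1}{\rho^{d+l+1}}\int_{0}^{\rho}g(t)\ud t\ud\rho=\int_{0}^{r}g(t)\bigg(\int_{t}^{r}\frac{1}{\rho^{d+l+1}}\ud\rho\bigg)\ud t\,.
\end{equation*}

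The second step is to compute the inner integral explicitly. Since $d\ge 1$ and $l\ge 0$ we have $d+l\ge 1>0$, hence for every $t\in(0,r)$
\begin{equation*}
\int_{t}^{r}\frac{1}{\rho^{d+l+1}}\ud\rho=\frac{1}{d+l}\Big(\frac{1}{t^{d+l}}-\frac{1}{r^{d+l}}\Big)\le\frac{1}{d+l}\,\frac{1}{t^{d+l}}\,.
\end{equation*}
Plugging this bound into the previous identity and using once more that $g\ge 0$ yields
\begin{equation*}
\int_{0}^{r}\frac{1}{\rho^{d+l+1}}\int_{0}^{\rho}g(t)\ud t\ud\rho\le\frac{1}{d+l}\int_{0}^{r}\frac{g(t)}{t^{d+l}}\ud t\,,
\end{equation*}
which is the assertion.

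There is essentially no genuine obstacle in this argument; the only subtlety worth stressing is the applicability of Tonelli's theorem, which holds here with no integrability assumption on $g$ precisely because the integrand is nonnegative. Consequently the identity above (and hence the final inequality) remains meaningful even when the right-hand side equals $+\infty$, in which case the statement is trivially true, so the lemma holds for every Borel measurable $g:\mathbb{R}\to[0,+\infty)$ without restriction.
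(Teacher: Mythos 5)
Your argument is correct: Tonelli's theorem applies because the integrand is nonnegative, the inner integral $\int_t^r\rho^{-(d+l+1)}\ud\rho=\frac{1}{d+l}(t^{-(d+l)}-r^{-(d+l)})$ is computed correctly since $d+l\ge 1>0$, and dropping the negative term gives exactly the claimed bound. The paper does not prove this lemma itself but only cites \cite[Proposition~6]{ADPM}; your proof is the standard swap-the-order-of-integration argument behind that reference, so there is nothing to object to.
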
 
%%%%%%%%%%%%%%%%%%%%%%%%%%%%%%%%%
%%%%%%%%%%%%%%%%%%%%%%%%%%%%%%%%%
%%%%%%%%%%%%%%%%%%%%%%%%%%%%%%%%%
The following result will be used in the proof of Theorem \ref{thmGammaconvs1}(i). It is the $L^2$ analog of \cite[Proposition 4]{ADPM}.
%%%%%%%%%%%%%%%%%%%%%%%%%%%%%%%%%
%%%%%%%%%%%%%%%%%%%%%%%%%%%%%%%%%
%%%%%%%%%%%%%%%%%%%%%%%%%%%%%%%%%
\begin{proposition}\label{prop2enunciato}
	There exists a constant $\bar{C}(d)>0$ such that for every $v \in L^2(\mathbb{R}^d)$, for every bounded open set $\Omega' \subset \mathbb{R}^d$, for every $s \in (0,1)$\,, and for every $h \in \mathbb{R}^d$\,, it holds 
	\begin{equation*}%\label{prop2formenunc}
	\| \tau_{h}v-v \|_{L^2(\Omega')}^{2} \leq \vert h \vert^{2s}\bar{C}(d)(1-s) \int_{B_{\vert h \vert}} \frac{\|\tau_{y}v-v \|_{L^{2}(\Omega'_{\vert h \vert})}^{2}}{\vert y \vert^{d+2s}}\ud y\,.
	\end{equation*}
\end{proposition}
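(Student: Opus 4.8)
The plan is to combine the previous two auxiliary results: Proposition \ref{prop1formenunc} (which controls the translation $\|\tau_h v - v\|_{L^2(\Omega')}^2$ by an $L^2$-average over $B_\rho$ of the same quantity, weighted by $|h|^2/\rho^{d+2}$, for any $\rho \le |h|$) and the Hardy-type inequality of Lemma \ref{Hardytipodis}. The idea is that \eqref{prop1formenunc} holds for \emph{every} $\rho \in (0,|h|]$, so we are free to integrate it against $\rho$ over $(0,|h|]$ and exploit the favorable scaling. First I would write, for $\rho \in (0,|h|]$,
$$
\| \tau_{h}v-v \|_{L^2(\Omega')}^{2} \leq C(d)\frac{|h|^{2}}{\rho^{d+2}}\int_{B_\rho}\|\tau_{y}v-v \|_{L^2(\Omega'_{|h|})}^2\ud y,
$$
divide both sides by $\rho^{1-2s}$ (note $\rho^{-(d+2)}\cdot\rho^{-(1-2s)} = \rho^{-(d+2s+1)}$), and integrate in $\rho$ over $(0,|h|]$. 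The left-hand side produces a factor $\int_0^{|h|}\rho^{2s-1}\ud\rho = \frac{|h|^{2s}}{2s}$, so
$$
\frac{|h|^{2s}}{2s}\| \tau_{h}v-v \|_{L^2(\Omega')}^{2} \leq C(d)|h|^{2}\int_0^{|h|}\frac{1}{\rho^{d+2s+1}}\Big(\int_{B_\rho}\|\tau_{y}v-v \|_{L^2(\Omega'_{|h|})}^2\ud y\Big)\ud\rho.
$$

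Next, I would pass to polar coordinates in the inner integral: writing $g(t) := \int_{\partial B_1}\|\tau_{t\sigma}v - v\|_{L^2(\Omega'_{|h|})}^2\, t^{d-1}\ud\mathcal{H}^{d-1}(\sigma)$, we have $\int_{B_\rho}\|\tau_y v - v\|_{L^2(\Omega'_{|h|})}^2\ud y = \int_0^\rho g(t)\ud t$, and $g \ge 0$ is Borel measurable. Then Lemma \ref{Hardytipodis} with $l = 2s-1 \ge -1$ (applicable for $s \in (0,1)$, where $d + l = d + 2s - 1 \ge d - 1 \ge 0$, so $l \ge 0$ when $d \ge 1$ only if $2s \ge 1$; more carefully, the lemma is stated for $l \ge 0$, so I should take $l = 2s$ and adjust the power — see the obstacle paragraph) gives
$$
\int_0^{|h|}\frac{1}{\rho^{d+2s+1}}\int_0^\rho g(t)\ud t\,\ud\rho \le \frac{1}{d+2s}\int_0^{|h|}\frac{g(t)}{t^{d+2s}}\ud t = \frac{1}{d+2s}\int_{B_{|h|}}\frac{\|\tau_y v - v\|_{L^2(\Omega'_{|h|})}^2}{|y|^{d+2s}}\ud y.
$$
Combining, $\| \tau_{h}v-v \|_{L^2(\Omega')}^{2} \le \frac{2s\,C(d)}{d+2s}|h|^{2}\int_{B_{|h|}}\frac{\|\tau_y v - v\|_{L^2(\Omega'_{|h|})}^2}{|y|^{d+2s}}\ud y$. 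This is almost the claim but with $|h|^2$ instead of $|h|^{2s}$ and without the crucial $(1-s)$ factor.

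The main obstacle — and the one requiring care — is recovering the correct powers and the $(1-s)$ prefactor. The resolution is that one should \emph{not} integrate $\rho$ all the way to $|h|$ with the raw bound; instead, after getting $|y|^{d+2s}$ in the denominator we still have a leftover $|h|^{2-2s} = |h|^{2(1-s)}$ discrepancy, which must be absorbed by observing that in \eqref{prop1formenunc} we may equally well apply the Hardy inequality so that the extra powers of $\rho$ combine to yield $\rho^{-(d+2)}$ paired against $t^{-(d+2)}$ rather than $t^{-(d+2s)}$. Concretely, I would instead divide \eqref{prop1formenunc} by $\rho^{1-2s}$ but keep track that $\int_{B_{|h|}} |y|^{-(d+2s)}\ud y = \frac{d\omega_d}{2(1-s)}|h|^{-2s}$ (finite precisely because $s < 1$), so the $(1-s)^{-1}$ blow-up of this integral is exactly compensated by multiplying through by $(1-s)$; tracing the constants, the $|h|^{2}$ from \eqref{prop1formenunc} times $|h|^{-2s}$ from the Hardy step yields $|h|^{2-2s}$, and one final application of the Hardy inequality with the exponent shifted (using $l$ chosen so that $d+l = d+2s$, i.e. $l = 2s$, which satisfies $l \ge 0$) absorbs the mismatch. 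I would therefore present the argument as: apply \eqref{prop1formenunc}, divide by $\rho^{1-2s}$, integrate over $(0,|h|]$, use Fubini and polar coordinates to reduce the $\rho$-integral to the form in Lemma \ref{Hardytipodis} with $l = 2s$, apply that lemma, and finally collect the constants — the $(1-s)$ appears because $\int_0^{|h|}\rho^{2s-1}\ud\rho$ on the left and the normalization of the kernel conspire, giving $\bar C(d) = \tfrac{2 C(d)}{d}$ or a similar explicit dimensional constant, with the $(1-s)$ made explicit by writing the estimate in the normalized form stated.
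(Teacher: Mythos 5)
You have the right ingredients and the right overall structure (weight \eqref{prop1formenunc} by a power of $\rho$, integrate over $(0,|h|]$, and apply Lemma~\ref{Hardytipodis} with $l=2s$), which is exactly the paper's route, but the key step is executed with the wrong weight, and this is precisely what loses the factor $(1-s)$. You say "divide by $\rho^{1-2s}$"; the correct operation is to \emph{multiply} by $\rho^{1-2s}$. Your two sides are in fact treated inconsistently: on the left you compute $\int_0^{|h|}\rho^{2s-1}\ud\rho = |h|^{2s}/(2s)$ (consistent with dividing), while on the right you claim the kernel becomes $\rho^{-(d+2s+1)}$ (consistent with multiplying, since $\rho^{-(d+2)}\cdot\rho^{1-2s}=\rho^{-(d+2s+1)}$, whereas $\rho^{-(d+2)}\cdot\rho^{2s-1}=\rho^{-(d+3-2s)}$). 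If you multiply by $\rho^{1-2s}$ throughout, the left-hand side produces $\int_0^{|h|}\rho^{1-2s}\ud\rho = \frac{|h|^{2-2s}}{2(1-s)}$ --- this is the \emph{only} source of the $(1-s)$ --- and the right-hand side kernel is $\rho^{-(d+2s+1)}$, to which Lemma~\ref{Hardytipodis} applies with $l=2s\ge 0$, yielding $\frac{1}{d+2s}\int_{B_{|h|}}|y|^{-(d+2s)}\|\tau_y v - v\|^2_{L^2(\Omega'_{|h|})}\ud y$ after passing to polar coordinates. Dividing through by $\frac{|h|^{2-2s}}{2(1-s)}$ then converts the $|h|^{2}$ coming from \eqref{prop1formenunc} into $|h|^{2s}$ and produces the prefactor $2(1-s)C(d)/(d+2s)$, which is the claim.

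Your attempted repair in the final paragraph does not close this gap: the identity $\int_{B_{|h|}}|y|^{-(d+2s)}\ud y = \frac{d\omega_d}{2(1-s)}|h|^{-2s}$ is false (that integral diverges at the origin for every $s>0$; the integral that is finite precisely because $s<1$ is $\int_{B_{|h|}}|y|^{-(d+2s-2)}\ud y = \frac{d\omega_d}{2(1-s)}|h|^{2-2s}$), and your closing recipe still prescribes dividing by $\rho^{1-2s}$ and attributes the $(1-s)$ to $\int_0^{|h|}\rho^{2s-1}\ud\rho$, which equals $|h|^{2s}/(2s)$ and produces a $1/(2s)$, not a $(1-s)$. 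The fix is a single sign change in the exponent of the weight; with it, your argument coincides with the paper's proof.
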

%%%%%%%%%%%%%%%%%%%%%%%%%%%%%%%%%
%%%%%%%%%%%%%%%%%%%%%%%%%%%%%%%%%
%%%%%%%%%%%%%%%%%%%%%%%%%%%%%%%%%
\begin{proof}  For every fixed $v \in L^2(\mathbb{R}^d)$, 
	we define the function $g_v:[0,|h|]\to\R$ as 
	$$
	g_v(t):= \int_{\partial B_t} \|\tau_{y} v - v  \|_{L^{2}(\Omega'_{\vert h \vert })}^{2}       \ud \mathcal{H}^{d-1}(y)\,.
	$$
	By integrating in polar coordinates formula \eqref{prop1formenunc} we have
	\begin{equation}\label{prop2fomr1dim}
	\| \tau_{h}v-v \|_{L^{2}(\Omega')}^{2} \leq C(d)\frac{\vert h \vert^{2}}{\rho^{d+2}} \int_{0}^{\rho} g_v(t)\ud t\,.
	\end{equation}
	By multiplying both sides of \eqref{prop2fomr1dim} by $\rho^{1-2s}$ and 
 integrating in the interval $ [0,\vert h \vert]$\,, using Lemma \ref{Hardytipodis}  and the very definition of $g_v$\,,
 we obtain
	\begin{equation*}%\label{prop2form2dim}
	\begin{aligned}
	\| \tau_{h}v-v \|_{L^{2}(\Omega')}^{2} \leq& 2C(d)(1-s) \vert h \vert^{2s} \int_{0}^{\vert h \vert } \frac{1}{\rho^{d+2s+1}} \int_{0}^{\rho} g_v(t)\ud t\ud z\\
	\le&C(d)(1-s) \vert h \vert^{2s}\int_{0}^{|h|}\frac{g_v(t)}{t^{d+2s}}\ud t\\
	=&C(d)(1-s) \vert h \vert^{2s} \int_{B_{\vert h \vert}} \frac{\|\tau_{y}v-v \|_{L^{2}(\Omega'_{\vert h \vert})}^{2}}{\vert y \vert^{d+2s}}\ud y\,,
	\end{aligned}
	\end{equation*}
	 which concludes the proof. 
	%i.e., the claim.
	\end{proof}
%%%%%%%%%%%%%%%%%%%%%%%%%%%%%%%%%
%%%%%%%%%%%%%%%%%%%%%%%%%%%%%%%%%
%%%%%%%%%%%%%%%%%%%%%%%%%%%%%%%%%

We are now in a position to prove Theorem \ref{thmGammaconvs1}(i).
\begin{proof}[Proof of Theorem \ref{thmGammaconvs1}(i)]
	By %the formula \eqref{prop2formenunc} of the 
	Proposition \ref{prop2enunciato} and by the upper bound \eqref{COMP3} we obtain that for  every open bounded set $\Omega'\subset \mathbb{R}^d$ and for every $h\in\R^d$
	\begin{equation} \label{teocompform1}
	\| \tau_{h}\tilde{u}^n-\tilde{u}^n \|_{L^{2}(\Omega')} \leq C(d,M)\vert h \vert^{s_n}\,,
	\end{equation}
	where we recall that $\tilde u^n$ is the extension of $u^n$ to $0$ in  $\mathbb{R}^d \setminus \Omega$\,.	
%	 that $ \tilde{u}^n= u^n$ in  $\Omega$ and  $\tilde{u}^n=0$ in $\mathbb{R}^d \setminus \Omega$\,.
	Therefore, the sequence $\{\tilde{u}^n\}_{n \in \mathbb{N}}$ satisfies the assumption of Theorem \ref{frechetkolmthm}, and hence there exists a function $v \in L^{2}(\mathbb{R}^d)$ with $v=0 $ in $\mathbb{R}^d \setminus \Omega$, such that, up to a subsequence, $\tilde u^n \rightarrow  v $ in $L_{\mathrm{loc}}^2(\mathbb{R}^d)$. Now, sending $n\to +\infty$ in \eqref{teocompform1}, we obtain that for every open bounded set $\Omega' \subset \mathbb{R}^d$
	\begin{equation*} 
	\| \tau_{h}v-v \|_{L^{2}(\Omega')} \leq C(d,M)\vert h \vert \quad \textrm{for every }h \in \mathbb{R}^d \,,
	\end{equation*}
	and hence by Theorem \ref{carsobvshift} (choosing $\Omega'=\R^d$ in the above inequality) we obtain that $\mathrm{D}v \in L^2(\mathbb{R}^d)$\,. Since $v=0$ in $\mathbb{R}^d \setminus \Omega$, by the regularity of $\partial \Omega$, we have that $v$ is the extension to $0$ in $\R^d\setminus\Omega$ of a function $u \in H_0^1(\Omega)$\,, thus concluding the proof.
	% of the claim.
\end{proof}
%%%%%%%%%%%%%%%%%%%%%%%%%%%%
%%%%%%%%%%%%%%%%%%%%%%%%%%%%
%%%%%%%%%%%%%%%%%%%%%%%%%%%%
\subsection{Proof of the $\Gamma$-liminf inequality}
%%%%%%%%%%%%%%%%%%%%%%%%%%%%
%%%%%%%%%%%%%%%%%%%%%%%%%%%%
%%%%%%%%%%%%%%%%%%%%%%%%%%%%
Here we prove the $\Gamma$-liminf inequality in Theorem~\ref{thmGammaconvs1}. The proof of this result closely resembles the one of \cite[Theorem 2]{BBM1}.
%%%%%%%%%%%%%%%%%%%%%%%%%%%%
%%%%%%%%%%%%%%%%%%%%%%%%%%%%
%%%%%%%%%%%%%%%%%%%%%%%%%%%%
\begin{proof}[Proof of Theorem \ref{thmGammaconvs1}(ii)]
We can assume without loss of generality that \eqref{COMP3} holds true so that
the function $u$ is actually in $H^1_0(\Omega)$.

{\it Claim 1.} {\it Let $\eta\in \CC_{\cc}^\infty(B_1)$ be a standard mollifier, i.e., $\eta\ge 0$ and $\int_{B_1}\eta(x)\ud x=1$\,. For every $\ep>0$\,, we set  $\eta_\ep(\cdot):=\frac{1}{\ep^d}\eta(\frac{\cdot}{\ep})$\,.
Then, for every $s\in (0,1)$
\begin{equation*}%\label{mollif} 
 \frac 1 2[\tilde{v}_\ep]^2_s \leq F^s(v)\qquad\textrm{for every }v\in L^2(\Omega)\textrm{ and for every }\ep>0\,,
\end{equation*}
where $\tilde{v}_\ep:=\tilde{v}\ast \eta_{\varepsilon}$\,.}

Indeed, setting $\Omega_{\varepsilon}:=\{ x\in\R^d\ :\ \text{dist}(x,\Omega)\leq\varepsilon\}$\,, we have that $\tilde v_\ep=0$ in $\R^d\setminus\Omega_\ep$\,; 
therefore, by applying Jensen inequality to the probability measure $\eta_{\varepsilon}\ud z$\,, we get
\begin{equation*}
\begin{aligned}
[\tilde{v}_{\varepsilon}]^2_s\leq& \int_{\R^d}\int_{\R^d} \int_{\R^d} \frac{|\tilde{v}(x-z)-\tilde{v}(y-z)|^2}{|x-y|^{d+2s}} \eta_{\varepsilon}(z)\ud z \ud y\ud x\\
=& \int_{\R^d}\int_{\R^d} \int_{\R^d}\frac{|\tilde{v}(x-z)-\tilde{v}(y-z)|^2}{|x-z-(y-z)|^{d+2s}} \eta_{\varepsilon}(z)\ud z \ud y\ud x\\
 =&2F^s(v)\,. 
 \end{aligned}
 \end{equation*}
%%%%%%%%%%%%%%%%%%%%%%%
%%%%%%%%%%%%%%%%%%%%%%%
%%%%%%%%%%%%%%%%%%%%%%%
{\it Claim 2.} {\it For every $\ep>0$ and for every $R>0$, it holds
\begin{equation}\label{stepdue}
\frac{\omega_d}{4}\liminf_{n\to +\infty}\,\int_{B_R}|\nabla \tilde u^n_\ep|^2(\mathrm{dist}(x,\partial B_R))^{2(1-s_n)}\ud x\le \liminf_{n\to +\infty}(1-s_n)\frac{[\tilde{u}^n_\ep]^2_{s_n}}{2}\,, 
\end{equation}
where $\tilde u_\ep^n:=\tilde u^n\ast \eta_\ep$\,, with $\eta_\ep$ as in Claim 1.}

Indeed, by Taylor expansion, using that  $\sup_{n\in\N}\|u^n\|^2_{L^2(\Omega)}\le M$ we have that
\begin{equation*}
\begin{aligned}
|\tilde{u}_\ep^{n}(x)-\tilde{u}_\ep^{n}(y)|^2\ge&\Big|\nabla \tilde{u}_\ep^{n}(x)\cdot\frac{x-y}{|x-y|}\Big|^2|x-y|^2\\
&-\|u^n\|^2_{L^1(\Omega)}\|\eta_\ep\|^2_{C^{2}(\R^d)}|x-y|^3-\|u^n\|_{L^1(\Omega)}^2\|\eta_\ep\|^2_{C^{2}(\R^d)}|x-y|^4\\
\ge&\Big|\nabla \tilde{u}_\ep^{n}(x)\cdot\frac{x-y}{|x-y|}\Big|^2|x-y|^2-C(\ep,M)(|x-y|^3+|x-y|^4)\,,
\end{aligned}
\end{equation*}
Therefore, for every $x\in\R^d$\,, setting $\delta:=\mathrm{dist}(x,\partial B_R)$\,, we get
\begin{equation}\label{term}
\begin{aligned}
&(1-s_n) \int_{B_R} \frac{|\tilde{u}_\ep^{n}(x)-\tilde{u}_\ep^{n}(y)|^2}{|x-y|^{d+2s_n}}\ud y\geq  (1-s_n)\int_{B_\delta(x)} \frac{|\tilde{u}_{\varepsilon}^n(x)-\tilde{u}_{\varepsilon}^n(y)|^2}{|x-y|^{d+2s_n}}\ud y\\
\ge&(1-s_n)\int_{B_\delta(x)}\Big|\nabla \tilde{u}_\ep^{n}(x)\cdot\frac{x-y}{|x-y|}\Big|^2|x-y|^{2(1-s_n)-d}\ud y\\
&-(1-s_n)C(\ep,M)\int_{B_\delta(x)}\frac{|x-y|^{3}+|x-y|^4}{|x-y|^{d+2s_n}}\ud y\\
=&\frac{\omega_d}{2}\delta^{2(1-s_n)}|\nabla \tilde u_\ep^n(x)|^2-(1-s_n)C(\ep,M,d)\,,
 \end{aligned}
\end{equation}
where in the last equality we  integrated over spherical boundaries from 0 to $\delta$, using  that $\int_{\mathbb{S}^{d-1}}|\nabla  \tilde{u}_\ep^{n}(x)\cdot\theta|^2\ud \theta=\omega_d|\nabla   \tilde{u}_\ep^{n}(x)|^2$\,.
By integrating \eqref{term} over $B_R$, we get \eqref{stepdue}.

By Claim 1 and Claim 2, for every $\ep>0$ and for every $R>0$ we have that
\begin{equation*}%\label{PreLinf} 
\liminf_{n\to +\infty}(1-s_n)F^{s_n}(u^n)\geq  \frac{\omega_d}{4}\liminf_{n\to +\infty} \int_{B_R}|\nabla \tilde{u}_{\varepsilon}^n(x)|^2(\mathrm{dist}(x,\partial B_R))^{2(1-s_n)}\ud x\,,
\end{equation*}
whence, using that for every $\ep>0$ the sequence $\{\tilde u_\ep^n\}_{n\in\N}$ is equi-Lipschitz, 
%and applying the Dominated Convergence Theorem and Fatou lemma,
 we get that, up to a (not relabeled) subsequence,
\begin{equation}\label{PreLinf'} 
\liminf_{n\to +\infty}(1-s_n)F^{s_n}(u^n)\geq  \frac{\omega_d}{4}\liminf_{n\to +\infty} \int_{B_R}|\nabla \tilde{u}_{\varepsilon}^n(x)|^2\ud x\,.
\end{equation}
Notice that $\tilde u^n_\ep\to \tilde u_\ep:=\tilde u\ast\eta_\ep$ in $L^2(\R^d)$ as $n\to +\infty$\,.
By \eqref{PreLinf'} and by \eqref{COMP3}, we have that in fact $\tilde u^n_\ep\weakly \tilde u_\ep$ in $H^1(B_R)$\,.  

%In particular, $\tilde u^n_\ep\to v_\ep$ in $L^2(B_R)$ and hence $v_\ep=\tilde u\ast\eta_\ep=:\tilde u_\ep$ a.e. in $B_R$\,.
In conclusion, by \eqref{PreLinf'}, we deduce that for every $\ep>0$
$$
\liminf_{n\to +\infty}(1-s_n)F^{s_n}(u^n)\ge  \frac{\omega_d}{4}\int_{B_R}|\nabla \tilde{u}_{\varepsilon}(x)|^2\ud x\,,
$$
whence the claim follows sending first $\ep\to 0$ (using that $\tilde u_\ep\to \tilde u$ in $H^1(\R^d)$ as $\ep\to 0$)
and then $R\to +\infty$\,. 
\end{proof}
%%%%%%%%%%%%%%%%%%%%%%%%%
%%%%%%%%%%%%%%%%%%%%%%%%%
%%%%%%%%%%%%%%%%%%%%%%%%%
\subsection{Proof of the $\Gamma$-limsup inequality}
%%%%%%%%%%%%%%%%%%%%%%%%%
%%%%%%%%%%%%%%%%%%%%%%%%%
%%%%%%%%%%%%%%%%%%%%%%%%%
The proof of the $\Gamma$-limsup inequality relies on the pointwise convergence of $(1-s)F^s$ to $F^1$ (as $s\to 1$) for smooth functions with compact support and on the density of smooth functions in $H^1_0(\Omega)$\,.
%%%%%%%%%%%%%%%%%%%%%%%%%
%%%%%%%%%%%%%%%%%%%%%%%%%
%%%%%%%%%%%%%%%%%%%%%%%%%
As for the pointwise convergence we recall the following result, proved in \cite{Monik} in a more general setting.
%%%%%%%%%%%%%%%%%%%%%%%%%
%%%%%%%%%%%%%%%%%%%%%%%%%
%%%%%%%%%%%%%%%%%%%%%%%%%
\begin{theorem}\label{teocitaziones1limsup}
	For every $v \in \CC_\cc^{\infty}(\mathbb{R}^d)$ it holds
	$$
	 \lim_{s \rightarrow 1^-} (1-s) \int_{\mathbb{R}^d} \int_{\mathbb{R}^d} \frac{\vert v(x)-v(y) \vert^2}{\vert x-y \vert^{d+2s}}\ud x\ud y= \frac{\omega_d}{2}\int_{\R^d}|\nabla v(x)|^2\ud x\,.
	$$ 
\end{theorem}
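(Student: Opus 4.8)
\textbf{Proof strategy for Theorem~\ref{teocitaziones1limsup}.}
The plan is to reduce the $2d$-dimensional singular integral to a one-dimensional computation by freezing one variable and integrating in polar coordinates around it. Fix $v\in\CC_\cc^\infty(\R^d)$ and, for each $x\in\R^d$, pass to polar coordinates $y=x+r\theta$ with $r>0$, $\theta\in\mathbb S^{d-1}$, so that
$$
(1-s)\int_{\R^d}\int_{\R^d}\frac{|v(x)-v(y)|^2}{|x-y|^{d+2s}}\ud y\ud x=(1-s)\int_{\R^d}\int_{\mathbb S^{d-1}}\int_0^{+\infty}\frac{|v(x)-v(x+r\theta)|^2}{r^{1+2s}}\ud r\ud\mathcal H^{d-1}(\theta)\ud x\,.
$$
The key point is that the factor $(1-s)$ exactly compensates the divergence of $\int_0^1 r^{-1-2s}\ud r$ as $s\to1^-$; since $v$ is smooth with compact support one has $|v(x)-v(x+r\theta)|^2\le \|\nabla v\|_{L^\infty}^2 r^2$ for small $r$ and $|v(x)-v(x+r\theta)|^2\le 4\|v\|_{L^\infty}^2$ for all $r$, which provides an integrable dominating function (uniformly in $s$ bounded away from $0$), so the order of limits and integrals can be interchanged by Dominated Convergence.

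The main step is the inner one-dimensional limit. By a first-order Taylor expansion, $v(x+r\theta)-v(x)=r\,\nabla v(x)\cdot\theta+O(r^2)$ uniformly in $x$ and $\theta$, hence $|v(x+r\theta)-v(x)|^2=r^2|\nabla v(x)\cdot\theta|^2+O(r^3)$. Therefore
$$
(1-s)\int_0^{+\infty}\frac{|v(x)-v(x+r\theta)|^2}{r^{1+2s}}\ud r=(1-s)|\nabla v(x)\cdot\theta|^2\int_0^1 r^{1-2s}\ud r+(1-s)\cdot(\text{bounded as }s\to1^-)\,,
$$
and since $(1-s)\int_0^1 r^{1-2s}\ud r=\frac{1-s}{2-2s}=\frac12\to\frac12$ while the remainder terms carry a prefactor $(1-s)\to0$, the inner integral converges to $\tfrac12|\nabla v(x)\cdot\theta|^2$ pointwise in $x$ and $\theta$. (One must be slightly careful to split $\int_0^{+\infty}=\int_0^1+\int_1^{+\infty}$: on $[1,+\infty)$ the integrand is bounded by $4\|v\|_{L^\infty}^2 r^{-1-2s}$, whose integral is $\le \frac{2\|v\|_{L^\infty}^2}{s}$, so the $(1-s)$ prefactor kills this contribution entirely.)

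Finally, integrate the pointwise limit over $\theta\in\mathbb S^{d-1}$ and $x\in\R^d$, invoking Dominated Convergence once more with the dominating function built above (which is integrable on $\R^d\times\mathbb S^{d-1}\times(0,+\infty)$ because $v$ has compact support in $x$). Using the elementary identity $\int_{\mathbb S^{d-1}}|\nabla v(x)\cdot\theta|^2\ud\mathcal H^{d-1}(\theta)=\omega_d|\nabla v(x)|^2$ (already used in the proof of Claim~2 of Theorem~\ref{thmGammaconvs1}(ii)), we obtain
$$
\lim_{s\to1^-}(1-s)\int_{\R^d}\int_{\R^d}\frac{|v(x)-v(y)|^2}{|x-y|^{d+2s}}\ud x\ud y=\frac12\int_{\R^d}\omega_d|\nabla v(x)|^2\ud x=\frac{\omega_d}{2}\int_{\R^d}|\nabla v(x)|^2\ud x\,,
$$
which is the claim. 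The only real obstacle is bookkeeping the uniform (in $s$) integrable majorant so that all the limit-integral exchanges are legitimate; the algebra of the one-dimensional integral is routine. Alternatively, since this statement is standard, one may simply cite \cite{Monik} (or \cite{BBM1}), where it is proven in greater generality.
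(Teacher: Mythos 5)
Your argument is correct in substance, but note that the paper does not prove this statement at all: it is simply quoted from \cite{Monik} (it is the classical Bourgain--Brezis--Mironescu pointwise limit of \cite{BBM1}), so your self-contained computation --- polar coordinates around $x$, the exact evaluation $(1-s)\int_0^1 r^{1-2s}\ud r=\tfrac12$, a Taylor expansion for the leading term, and the identity $\int_{\mathbb S^{d-1}}|\nabla v(x)\cdot\theta|^2\ud\mathcal H^{d-1}(\theta)=\omega_d|\nabla v(x)|^2$ --- is precisely the standard proof that the citation hides. Its merit is that it makes the constant $\omega_d/2$ transparent and uses only tools already appearing in the proof of Claim~2 of Theorem~\ref{thmGammaconvs1}(ii); the paper's route buys brevity and the greater generality of \cite{Monik}, which you also acknowledge as an admissible shortcut.

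One bookkeeping point should be repaired. As written, your second paragraph suggests a single Dominated Convergence in the three variables $(x,\theta,r)$ with majorant built from $\|\nabla v\|_{L^\infty}^2 r^{1-2s}$ and $4\|v\|_{L^\infty}^2 r^{-1-2s}$; this cannot work as stated, both because the $(1-s)$ factor sits outside the $r$-integral (there is no $s$-uniform integrable majorant for the full triple integral, since $\int_0^1 r^{1-2s}\ud r\to+\infty$) and because a bound independent of $x$ is not integrable over $x\in\R^d$. The correct order --- which your ``main step'' in fact follows --- is: compute the $r$-integral first. For the tail $r\ge 1$, integrate in $x$ as well, using $\int_{\R^d}|v(x)-v(x+r\theta)|^2\ud x\le 4\|v\|_{L^2(\R^d)}^2$ (the pointwise bound $4\|v\|_{L^\infty}^2$ alone does not give an $x$-integrable majorant on all of $\R^d$), so that this contribution is bounded uniformly in $s\in[\tfrac12,1)$ and is annihilated by the prefactor $(1-s)$. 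For the near part $r\le 1$, the integrand vanishes unless $\mathrm{dist}(x,\supp v)\le 1$, so the uniform bound $\tfrac12\|\nabla v\|_{L^\infty}^2$ on the inner integral is supported in a bounded set, and Dominated Convergence in $(x,\theta)$ applies to the pointwise limit $\tfrac12|\nabla v(x)\cdot\theta|^2$. With these adjustments your proof is complete.
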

%%%%%%%%%%%%%%%%%%%%%%%%%
%%%%%%%%%%%%%%%%%%%%%%%%%
%%%%%%%%%%%%%%%%%%%%%%%%%
With Theorem \ref{teocitaziones1limsup} on hand we can prove Theorem \ref{thmGammaconvs1}(iii) using standard density arguments in $\Gamma$-convergence.
%%%%%%%%%%%%%%%%%%%%%%%%%
%%%%%%%%%%%%%%%%%%%%%%%%%
%%%%%%%%%%%%%%%%%%%%%%%%%
\begin{proof}[Proof of Theorem \ref{thmGammaconvs1}(iii)]
It is enough to prove the claim only for $u\in H^1_0(\Omega)$\,.
For every $u \in H_0^1(\Omega)$ there exists $\{u^k\}_{k \in \mathbb{N}} \subset \CC_\cc^{\infty}(\Omega)$ such that $ u^{k} \rightarrow u$ (as $k \rightarrow + \infty$) in $H^1(\Omega)$\,. In view of Theorem \ref{teocitaziones1limsup} we have that for every $k \in \mathbb{N}$
\begin{equation*}
\begin{aligned} 
\lim_{ n \rightarrow +\infty} (1-s_n) F^{s_n}(u^k)=& \lim_{n \rightarrow +\infty} \frac{1-s_n}{2} \int_{\mathbb{R}^d} \int_{\mathbb{R}^d} \frac{\vert \tilde u^k(x)-\tilde u^k(y) \vert^2}{\vert x-y \vert^{d+2s_n}}\ud x\ud y\\
=&  \frac{\omega_{d}}{4} \int_{\R^d} \vert \nabla \tilde u^k(x) \vert^2 \ud x= \frac{\omega_{d}}{4} \int_{\Omega} \vert \nabla u^k(x) \vert^2 \ud x\,.
\end{aligned}
\end{equation*}
Therefore by a standard diagonal argument there exists $\{k_n\}_{n \in \mathbb{N}}$ such that
$$ 
\lim_{n \rightarrow + \infty} u^{k_n}=u, \quad \limsup_{n \rightarrow + \infty}(1-s_n)F^{s_n}(u^{k_n}) \leq \frac{\omega_{d}}{4} \int_{\Omega} \vert \nabla u (x)\vert^2\ud x= F^1(u)\,,
$$
i.e., \eqref{lsupsto1}.
\end{proof}
%%%%%%%%%%%%%%%%%%%%%%%%%%%%%%%%%%%%%%%%%
%%%%%%%%%%%%%%%%%%%%%%%%%%%%%%%%%%%%%%%%%
%%%%%%%%%%%%%%%%%%%%%%%%%%%%%%%%%%%%%%%%%
\section{Minimizing movements for $\lambda$-convex functionals defined on a Hilbert space}\label{sec:general}
In this section we develop the general theory that will allow us to study the stability of the $s$-fractional heat flow as $s\to 0^+$ and $s\to 1^-$\,. 
Throughout this section $\h$ is a generic Hilbert space, $\langle\cdot, \cdot \rangle_\h$ is the inner product of $\h$ and $\vert \cdot \vert_\h$ is the norm induced by such a scalar product.  In the abstract setting of this section, we denote by $\dot{v}$ the time derivative of any function $v$ from a time interval with values in $\h$. 
%%%%%%%%%%%%%%%%%%
%%%%%%%%%%%%%%%%%%
%%%%%%%%%%%%%%%%%%
\begin{definition}[$\lambda$-convexity,  $\lambda$-positivity, $\lambda$-coercivity]
Let $\lambda>0$\,.	We say that a function $\F: \h\rightarrow (-\infty, +\infty]$ is $\lambda$-convex if the function $f(\cdot)+ \frac{\lambda}{2} \vert \cdot \vert_\h^2$ is convex.
Moreover, we say that $\F$ is $\lambda$-positive if 
$\F(x)+ \frac{\lambda}{2} \vert x \vert_\h^2 \geq 0$ for every $x \in\h$\,,
and we say that $\F$ is $\lambda$-coercive if the sublevels of the function  $\F(\cdot)+ \frac{\lambda}{2} \vert \cdot \vert_\h^2$ are bounded.
\end{definition}
%%%%%%%%%%%%%%%%%%
%%%%%%%%%%%%%%%%%%
%%%%%%%%%%%%%%%%%%
\begin{remark}\label{implicata}
\rm{We notice that if $\F$ is $\lambda$-positive, then $\F$ is $\tilde\lambda$-coercive for every $\tilde\lambda>\lambda$\,. 
}
\end{remark}
%%%%%%%%%%%%%%%%%%
%%%%%%%%%%%%%%%%%%
%%%%%%%%%%%%%%%%%%
\begin{proposition}\label{exisitminprobincre}
	Let $\F:\h\rightarrow (-\infty,+\infty]$ be a proper, strongly lower semicontinuous function which is $\lambda$-convex and $\lambda$-positive for some $\lambda>0$\,. Then for every $0<\tau < \frac{1}{ 2 \lambda}$ and for every $y \in \h$ the problem 
\begin{equation}\label{schemaMMtau}
\min \biggl\{ \F(x)+ \frac{1}{2 \tau} \vert x-y \vert_\h^2: \; x \in \h\biggr\}
\end{equation}
admits a unique solution.
\end{proposition}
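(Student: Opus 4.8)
The plan is to apply the direct method of the calculus of variations to the functional
\[
\Phi(x):=\F(x)+\frac{1}{2\tau}\,|x-y|_\h^2 ,
\]
and to deduce uniqueness from strict convexity. The guiding idea is to split off the $\lambda$-quadratic perturbation and write
\[
\Phi(x)=\underbrace{\Big(\F(x)+\tfrac{\lambda}{2}|x|_\h^2\Big)}_{=:g(x)}
+\underbrace{\Big(\tfrac{1}{2\tau}|x-y|_\h^2-\tfrac{\lambda}{2}|x|_\h^2\Big)}_{=:q(x)} .
\]
By $\lambda$-convexity, $g$ is convex; it is strongly lower semicontinuous because $\F$ is and $|\cdot|_\h^2$ is continuous; hence $g$ is \emph{weakly} lower semicontinuous, being convex and strongly lsc on a Hilbert space. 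By $\lambda$-positivity, $g\ge 0$. Expanding, $q(x)=c\,|x|_\h^2-\tfrac{1}{\tau}\langle x,y\rangle_\h+\tfrac{1}{2\tau}|y|_\h^2$ with $c:=\tfrac{1}{2\tau}-\tfrac{\lambda}{2}$, and the hypothesis $\tau<\tfrac{1}{2\lambda}$ gives $\tfrac{1}{2\tau}>\lambda$, so $c>0$; thus $q$ is a continuous, coercive, strictly convex quadratic function.

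\emph{Existence.} The functional $\Phi=g+q$ is proper (since $\F$ is proper and $q$ is finite everywhere), and it is weakly lower semicontinuous as a sum of the weakly lsc $g$ and the weakly lsc $q$ (here $x\mapsto|x|_\h^2$ is weakly lsc and $x\mapsto\langle x,y\rangle_\h$ is weakly continuous). From $g\ge 0$ we get $\Phi(x)\ge q(x)\ge c\,|x|_\h^2-\tfrac{1}{\tau}|x|_\h|y|_\h+\tfrac{1}{2\tau}|y|_\h^2$, which tends to $+\infty$ as $|x|_\h\to+\infty$ because $c>0$; hence $\Phi$ is coercive, bounded below, and its infimum is finite. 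A minimizing sequence is therefore bounded in $\h$, so by reflexivity it admits a weakly convergent subsequence, whose weak limit is a minimizer by weak lower semicontinuity.

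\emph{Uniqueness.} Since $g$ is convex and $q$ is strictly convex, $\Phi=g+q$ is strictly convex (with the usual convention at the value $+\infty$), hence it has at most one minimizer.

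The step that genuinely uses the hypotheses, and the one to be careful with, is the passage from strong to weak lower semicontinuity of $g$: this is exactly where $\lambda$-convexity enters, mere strong lsc of $\F$ being insufficient in infinite dimensions, while $\lambda$-positivity together with the quantitative bound $\tau<\tfrac{1}{2\lambda}$ is what forces the effective quadratic coefficient $c$ to be strictly positive, yielding simultaneously coercivity and strict convexity. Apart from this, the argument is routine.
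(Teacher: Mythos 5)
Your proof is correct and follows essentially the same route as the paper's: the direct method, with $\lambda$-convexity plus strong lower semicontinuity upgrading to weak lower semicontinuity, $\lambda$-positivity together with $\frac{1}{2\tau}>\lambda$ yielding coercivity of the penalized functional, and strict convexity giving uniqueness. The only cosmetic difference is that you split off the quadratic as $g+q$ with $g=\F+\frac{\lambda}{2}|\cdot|_\h^2$, whereas the paper works directly with $\F(\cdot)+\frac{1}{2\tau}|\cdot|_\h^2$ and invokes its notion of $\frac{1}{2\tau}$-coercivity; the substance is identical.
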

%%%%%%%%%%%%%%%%%%
%%%%%%%%%%%%%%%%%%
%%%%%%%%%%%%%%%%%%
\begin{proof}
 We preliminarily notice that, since $\F$ is $\lambda$-convex and strongly lower semicontinuous, then the function $\F(\cdot)+\frac{1}{2\tau}|\cdot|^2_\h$ is strictly convex and strongly lower semicontinuous and, in turn, weakly lower semicontinuous.  Clearly, this implies that also $\F(\cdot)+\frac{1}{2\tau}|\cdot-y|^2_\h$ is weakly lower semicontinuous.  
 Moreover, by Remark \ref{implicata}, we have that $\F$ is $\frac{1}{2\tau}$-coercive.

Since $\F$ is proper,
$$
0\le \inf\biggl\{ \F(x)+ \frac{1}{2 \tau} \vert x-y \vert_\h^2: \; x \in \h\biggr\}\le M\,,
$$ 
for some $M>0$\,.
Let $ \{x_k\}_{k \in\mathbb{N}} \subset \h$ be a sequence such that 
 \begin{equation}\label{limcheminimizzaprop1} 
 \lim_{k \rightarrow +\infty} \F(x_k)+\frac{1}{2  \tau} \vert x_k-y \vert_\h^2= \inf \biggl\{ \F(x)+ \frac{1}{2 \tau} \vert x-y \vert_\h^2: \; x \in \h\biggr\}\,.  
 \end{equation}
By triangle inequality,  for $k$ sufficiently large, we have
 \begin{equation*}%\label{limitatezzaenergiapropmin}
 \begin{aligned}
 2M\ge&\F(x_k)+ \frac{1}{2 \tau} \vert x_k-y \vert_\h^2 \geq \F(x_k)+ \frac{1}{4 \tau} \vert x_k \vert_\h^2 - \frac{1}{2 \tau}\vert y \vert_\h^2 
 \end{aligned}
 \end{equation*}
whence, in view of the $\frac{1}{2\tau}$-coercivity of the function $\F$\,, we deduce that, up to a subsequence, $x_k\overset{\h}{\weakly} x_\infty$ for some $x_\infty\in\h$\,.
Therefore, by \eqref{limcheminimizzaprop1} and by the weak lower semicontinuity of the function $\F(\cdot)+\frac {1}{2\tau}|\cdot-y|^2_\h$\,,  we obtain 
 \begin{multline*}
 \inf \biggl\{ \F(x)+ \frac{1}{2 \tau} \vert x-y \vert_\h^2: \; x \in \h\biggr\} = \lim_{k \rightarrow +\infty} \F(x_k)+ \frac{1}{2 \tau} \vert x_k -y \vert_\h^2 \\ 
 \ge \F(x_{\infty})+ \frac{1}{2 \tau}\vert x_{\infty}-y \vert_\h^2\,,
 \end{multline*}
 i.e., that $x_\infty$ is a minimizer of the problem in \eqref{schemaMMtau}.
 
 Finally, the uniqueness of the solution is a consequence of the strict convexity of the functional
 $\F(\cdot)+\frac{1}{2\tau}|\cdot-y|^2$\,.
\end{proof}
For every function $\F: \h \rightarrow (-\infty,+\infty]$ we denote by $D(\F)$ the set of all $x \in \h$ such that $\F(x)\in\R$.
%%%%%%%%%%%%%%%%%%
%%%%%%%%%%%%%%%%%%
%%%%%%%%%%%%%%%%%%
\begin{definition}[Fr\'echet subdifferential] \label{def:subdif}
	For $\F: \h\rightarrow (-\infty,+\infty]$ and $ x \in D(\F)$, the Fr\'echet subdifferential of $\F$ at $x$ is defined as
\begin{equation*}
	\partial \F(x):= \biggl\{ v \in \h: \liminf_{y \to x} \frac{ \F(y)-\F(x)-\langle v, y-x \rangle_\h}{\vert y -x \vert_\h} \geq 0\biggl\}\,.
	\end{equation*} 
\end{definition} 
%%%%%%%%%%%%%%%%%%
%%%%%%%%%%%%%%%%%%
%%%%%%%%%%%%%%%%%%
\begin{remark}\label{rem:subdiflambdaconvex}
Whenever $\F$ is a $\lambda$-convex function it holds that
\begin{equation}\label{subdiflambdaconvex}
\partial \F(x)=\biggl\{ v \in \h: \, \F(y)-\F(x)-\langle v, y-x \rangle_\h \geq - \frac{\lambda}{2}\, \vert y -x \vert_\h^2 \quad \text{for every } y \in \h \biggr\}\,.
\end{equation}
Indeed, for a convex function $\phi$, $v \in \partial \phi(x)$ if and only if $\phi(y)-\phi(x)-\langle v, y-x \rangle_\h \geq 0$ for every $y \in \h$, namely the Fréchet subdifferential coincides with the usual subdifferential of convex analysis. 
Then, being $\F$ $\lambda$-convex and since $\partial \Big(\phi + \frac{\lambda}{2} |\cdot|_\h^2\Big)= \partial\phi +  \lambda\, \cdot$, it holds that $v \in \partial \F(x)$ if and only if 
\[
\F(y) + \frac{\lambda}{2} |y|^2_\h-\F(x) - \frac{\lambda}{2} |x|^2_\h-\langle v+\lambda x, y-x \rangle_\h \geq 0 \quad\text{for every }y \in \h\,,
\]
which coincides with the condition in \eqref{subdiflambdaconvex} since $|y-x|^2_\h=|y|^2_\h-|x|^2_\h - 2\langle x, y-x \rangle_\h$.
\end{remark}
%%%%%%%%%%%%%%%%%%
%%%%%%%%%%%%%%%%%%
%%%%%%%%%%%%%%%%%%
Let $\F : \h \rightarrow (-\infty,+\infty]$ be a proper, strongly lower semicontinuous function which is $\lambda$-positive and $\lambda$-convex, for some $\lambda>0$\,, and let $x_0\in D(\F)$\,. For every $0<\tau<\frac{1}{2\lambda}$, we denote by $\{x_k^\tau\}_{k\in\N}$ the discrete-in-time evolution for $\F$ with initial datum $x_0$\,, defined by
\begin{equation}\label{movmin}
x_0^\tau:=x_0\,, \qquad x_{k+1}^\tau \in \arg\!\min \biggl\{ \F(x)+ \frac{1}{2 \tau} \vert x-x_k^{\tau}\vert^2_\h \biggr\} \quad \textrm{for every } k \in \mathbb{N}\cup\{0\}\,.
\end{equation}
Since $x_0\in D(\F)$\,, then $x_k^\tau\in D(\F)$ for every $k\in\N$\,.
Furthermore, we define the piecewise-affine interpolation $x^\tau:[0,+\infty)\rightarrow \h$ of $\{x^\tau_k\}_{k\in\N}$ as 
\begin{equation}\label{funzdeftratti}
x^{\tau}(t):= x_{k}^{\tau}+ \frac{x_{k+1}^{\tau}-x_{k}^{\tau}}{\tau}(t-k\tau), \quad t \in [k\tau, (k+1)\tau).
\end{equation}
%%%%%%%%%%%%%%%%%%%%%%
%%%%%%%%%%%%%%%%%%%%%%
%%%%%%%%%%%%%%%%%%%%%%
\begin{theorem}\label{exist}
	Let $\F: \h \rightarrow (-\infty,+\infty]$ be a proper, strongly lower semicontinuous function  which is $\lambda$-convex and $\lambda$-positive, for some $\lambda>0$\,. Let moreover $ x_0 \in D(\F)$\,. Then, there exists  a unique solution $x \in H^1([0,+\infty);\h)$ to the following Cauchy problem
	\begin{equation}\label{probcauchy}
	\begin{dcases}
		 \dot{x}(t)  \in - \partial \F(x(t)) \quad  \text{for a.e.\ $t \in [0,+\infty)$},  \\
		x(0)=x_0\,.
	\end{dcases} 
	\end{equation} 
	Moreover, for every $T>0$, $x^\tau\weakly x$ in $H^1([0,T];\h)$\,,
where $x^{\tau} $ is defined in  \eqref{funzdeftratti} for $0<\tau<\frac{1}{2\lambda}$\,.  
Furthermore, 
\begin{eqnarray}\label{dopo0}
&&\| \dot{x} \|^2_{L^2((0,T);\h)}\le 4^{8\lambda T+4}(|\F(x_0)|+\lambda|x_0|_\h^2)\textrm{ for every }T>0\,,\\
\label{dopo}
&&|x(t)-x^\tau(t)|^2_\h\le  8 \tau\, 12^{8\lambda t+4}\big(|\F(x_0)|+\lambda|x_0|_\h^2\big)
\textrm{ for every }t\ge0\,, \,\tau<\frac{1}{16\lambda}\,.
\end{eqnarray}
\end{theorem}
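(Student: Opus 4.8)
The plan is to run the classical minimizing--movements (implicit Euler) analysis for $\lambda$-convex functionals, in the quantitative form that yields the explicit rates in \eqref{dopo0}--\eqref{dopo}; compare \cite{AGS, NSV, A, DS}. Put $B_0:=|\F(x_0)|+\lambda|x_0|_\h^2$, which is nonnegative since $\lambda$-positivity forces $\F(x_0)\ge-\tfrac\lambda2|x_0|_\h^2$. By Proposition~\ref{exisitminprobincre} the scheme \eqref{movmin} is well defined for $0<\tau<\tfrac1{2\lambda}$, with $x_k^\tau\in D(\F)$ for every $k$; testing the minimality of $x_{k+1}^\tau$ against the competitor $x_k^\tau$ gives the energy--dissipation inequality $\F(x_{k+1}^\tau)+\tfrac1{2\tau}|x_{k+1}^\tau-x_k^\tau|_\h^2\le\F(x_k^\tau)$, so $k\mapsto\F(x_k^\tau)$ is nonincreasing and $\|\dot x^\tau\|_{L^2((0,m\tau);\h)}^2=\sum_{k=0}^{m-1}\tfrac1\tau|x_{k+1}^\tau-x_k^\tau|_\h^2\le 2(\F(x_0)-\F(x_m^\tau))\le 2|\F(x_0)|+\lambda|x_m^\tau|_\h^2$. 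Alongside $x^\tau$ I shall use the piecewise-constant interpolant $\bar x^\tau(t):=x_{k+1}^\tau$ for $t\in(k\tau,(k+1)\tau]$, for which $|x^\tau(t)-\bar x^\tau(t)|_\h\le|x_{k+1}^\tau-x_k^\tau|_\h$ and $\sup_t|x^\tau(t)-\bar x^\tau(t)|_\h^2\le\sum_k|x_{k+1}^\tau-x_k^\tau|_\h^2$. \emph{Step 1 (a priori bounds).} Combining the last display with $|x_m^\tau|_\h\le|x_0|_\h+\sqrt{m\tau}\,\|\dot x^\tau\|_{L^2((0,m\tau);\h)}$, one closes, on every time window of length $\le\tfrac1{4\lambda}$, a discrete Gr\"onwall inequality bounding both $\|\dot x^\tau\|_{L^2}^2$ on that window and the quantity $|\F|+\lambda|\cdot|_\h^2$ at its right endpoint by a fixed multiple of the same quantity at the left endpoint. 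Iterating over the $\mathcal{O}(\lambda T)$ windows covering $[0,T]$ gives $\|\dot x^\tau\|_{L^2((0,T);\h)}^2+\sup_{[0,T]}\big(|\F(\bar x^\tau)|+\lambda|\bar x^\tau|_\h^2\big)\le C^{\lambda T}B_0$ for a universal $C$, and a careful bookkeeping of the constants provides exactly the factors in \eqref{dopo0}--\eqref{dopo}. In particular $\{x^\tau\}_\tau$ is bounded in $H^1([0,T];\h)$, hence $\tfrac12$-H\"older in time, with $x^\tau(0)=x_0$.

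\emph{Step 2 (discrete EVI and Cauchy estimate).} Since $x_{k+1}^\tau$ minimizes the $(\tfrac1\tau-\lambda)$-convex map $x\mapsto\F(x)+\tfrac1{2\tau}|x-x_k^\tau|_\h^2$, expanding the square yields the discrete Evolution Variational Inequality $\langle x_{k+1}^\tau-x_k^\tau,\,x_{k+1}^\tau-y\rangle_\h\le\tau\big(\F(y)-\F(x_{k+1}^\tau)+\tfrac\lambda2|x_{k+1}^\tau-y|_\h^2\big)$, valid for every $y\in\h$. Given a second discrete solution $x^\sigma$ with the same initial datum, insert the nodes $y=x_{j+1}^\sigma$ into the inequality for $x^\tau$ and, symmetrically, $y=x_{k+1}^\tau$ into the one for $x^\sigma$, and sum along both time grids $\tau\N$ and $\sigma\N$: the cross terms $\F(x_{k+1}^\tau)-\F(x_{j+1}^\sigma)$ cancel in pairs, leaving a discrete Gr\"onwall inequality for $t\mapsto|\bar x^\tau(t)-\bar x^\sigma(t)|_\h^2$ whose forcing is an $\mathcal{O}(\tau+\sigma)$ error controlled, via Step~1, by $(\tau+\sigma)\,C^{\lambda T}B_0$. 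Iterating as before gives $\sup_{[0,T]}|x^\tau(t)-x^\sigma(t)|_\h^2\le(\tau+\sigma)\,C^{\lambda T}B_0$, so $\{x^\tau\}_\tau$ is Cauchy in $C([0,T];\h)$ for every $T$ and converges strongly to some $x\in C([0,\infty);\h)$ with $x(0)=x_0$; the $H^1$ bound of Step~1 then upgrades this to $x^\tau\weakly x$ in $H^1([0,T];\h)$ and $\dot x^\tau\weakly\dot x$ in $L^2((0,T);\h)$, whence $\|\dot x\|_{L^2((0,T);\h)}^2\le\liminf_\tau\|\dot x^\tau\|_{L^2((0,T);\h)}^2$ gives \eqref{dopo0}, while letting $\sigma\to0$ in the Cauchy estimate gives \eqref{dopo}.

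\emph{Step 3 (limit equation and uniqueness).} Integrating the discrete EVI over $(k\tau,(k+1)\tau)$ along $x^\tau$ — using $\int_{k\tau}^{(k+1)\tau}(x^\tau-\bar x^\tau)\ud s=-\tfrac\tau2(x_{k+1}^\tau-x_k^\tau)$ and dropping the nonpositive term $-\tfrac12|x_{k+1}^\tau-x_k^\tau|_\h^2$ — and summing gives, for grid times $t_1\le t_2$ and every $y\in D(\F)$, $\tfrac12|x^\tau(t_2)-y|_\h^2-\tfrac12|x^\tau(t_1)-y|_\h^2\le\int_{t_1}^{t_2}\big(\F(y)-\F(\bar x^\tau(s))+\tfrac\lambda2|y-\bar x^\tau(s)|_\h^2\big)\ud s$. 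Letting $\tau\to0$ — using $x^\tau\to x$ and $\bar x^\tau\to x$ uniformly on $[0,T]$, the uniform bound on $\F(\bar x^\tau)$, the strong lower semicontinuity of $\F$ and Fatou's lemma — one obtains the continuous EVI$_\lambda$ $\tfrac12|x(t_2)-y|_\h^2-\tfrac12|x(t_1)-y|_\h^2\le\int_{t_1}^{t_2}\big(\F(y)-\F(x(s))+\tfrac\lambda2|y-x(s)|_\h^2\big)\ud s$ for all $0\le t_1\le t_2$ and all $y\in\h$ (trivially so when $\F(y)=+\infty$); note $\F(x(s))\le\F(x_0)<+\infty$ by lower semicontinuity of $\F$ along the decreasing discrete energies, hence $x(s)\in D(\F)$. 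At every $t$ that is simultaneously a differentiability point of $x$ and a Lebesgue point of $s\mapsto\F(x(s))$, dividing by $t_2-t_1$ and sending $t_1\uparrow t$, $t_2\downarrow t$ yields $\langle\dot x(t),x(t)-y\rangle_\h\le\F(y)-\F(x(t))+\tfrac\lambda2|y-x(t)|_\h^2$ for all $y$, i.e.\ $\F(y)-\F(x(t))-\langle-\dot x(t),y-x(t)\rangle_\h\ge-\tfrac\lambda2|y-x(t)|_\h^2$, which by \eqref{subdiflambdaconvex} means precisely $-\dot x(t)\in\partial\F(x(t))$; thus $x$ solves \eqref{probcauchy}. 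For uniqueness, two solutions $x_1,x_2$ satisfy $-\dot x_i(t)\in\partial\F(x_i(t))$, so the $\lambda$-monotonicity of $\partial\F$ (immediate from \eqref{subdiflambdaconvex}) gives $\tfrac{\ud}{\ud t}|x_1-x_2|_\h^2\le2\lambda|x_1-x_2|_\h^2$ and Gr\"onwall with zero initial datum forces $x_1\equiv x_2$; consequently the whole family $\{x^\tau\}_\tau$, not merely subsequences, converges.

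\emph{Main obstacle.} The technical heart is Step~2: organizing the two discrete EVIs on the mismatched grids $\tau\N$ and $\sigma\N$ into one closed Gr\"onwall inequality with the correct $\mathcal{O}(\tau+\sigma)$ error and explicit exponential-in-$\lambda T$ constants — this is the origin of the factors $4^{8\lambda T+4}$ and $12^{8\lambda t+4}$ in \eqref{dopo0}--\eqref{dopo}. That single computation simultaneously delivers strong convergence, the rate \eqref{dopo} and, through lower semicontinuity, the bound \eqref{dopo0}; by contrast, the existence, identification and uniqueness of the limit carried out in Step~3 are routine.
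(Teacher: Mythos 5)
Your overall strategy is sound and reaches all the stated conclusions, but it diverges from the paper's proof at the one point where the real work lies, and that is exactly the point you leave as a sketch. The paper never compares two discrete solutions on mismatched grids. Instead it (1) obtains the a priori bound \eqref{bound100} by the same telescoping-plus-windowing argument as your Step~1, (2) extracts a weak $H^1([0,T];\h)$ limit $x$ by compactness, (3) identifies $x$ as a solution of \eqref{probcauchy} by integrating the subdifferential inequality $\dot x^\tau(t)\in-\partial\F(\tilde x^\tau(t))$ over small intervals and passing to the limit (essentially your Step~3), and only \emph{then} (4) derives the rate \eqref{dopo} a posteriori, by comparing $x^\tau$ with the already-constructed continuous solution $x$: the $\lambda$-monotonicity \eqref{monsotdiff} of $\partial\F$ applied to the pair $-\dot x(t)\in\partial\F(x(t))$, $-\dot x^\tau(t)\in\partial\F(\tilde x^\tau(t))$ gives $\frac{\ud}{\ud t}|x(t)-x^\tau(t)|_\h^2\le 8\lambda|x(t)-x^\tau(t)|_\h^2+4\tau(|\dot x^\tau|_\h^2+|\dot x|_\h^2)$, and a single continuous Gr\"onwall inequality closes \eqref{dopo}. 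This continuous-versus-discrete comparison is substantially lighter than your discrete-versus-discrete Cauchy estimate, because the continuous solution satisfies the inclusion at \emph{every} Lebesgue point, so there is only one grid to manage.

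By contrast, your Step~2 asserts that summing the two discrete EVIs ``along both time grids $\tau\N$ and $\sigma\N$'' produces a closed Gr\"onwall inequality with an $\mathcal{O}(\tau+\sigma)$ forcing after the cross terms ``cancel in pairs.'' When $\tau/\sigma$ is irrational the steps do not pair up: to make the cancellation and the error bookkeeping rigorous one needs the doubling-of-time-variables device (estimating $(t,s)\mapsto|\bar x^\tau(t)-\bar x^\sigma(s)|_\h^2$ and integrating along the diagonal) or De Giorgi's variational interpolants, as in \cite{AGS, NSV}. That machinery is known to work and would indeed deliver strong convergence in $C([0,T];\h)$ plus \eqref{dopo} in one stroke, so I would not call your route wrong; but as written the decisive computation is missing, and it is avoidable: once you have the weak $H^1$ limit and the limit inclusion (your Steps~1 and~3, which are essentially the paper's), the estimate \eqref{dopo} follows from \eqref{monsotdiff} and Gr\"onwall with no grid-matching at all. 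I would restructure accordingly, or else supply the two-grid argument in full.
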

%%%%%%%%%%%%%%%%%%%%%%
%%%%%%%%%%%%%%%%%%%%%%
%%%%%%%%%%%%%%%%%%%%%%
\begin{proof}
	\textit{ Uniqueness.}
	Let $T>0$ and let $x_1, x_2 \in H^1([0,T];\h)$ satisfy the Cauchy problem \eqref{probcauchy}  up to time $T$. 
    We first observe that
    \begin{equation}\label{monsotdiff}
    \langle y_1-y_2, v_1-v_2 \rangle_\h \leq  \lambda\,  \vert y_1 - y_2 \vert_\h^2\quad\textrm{for every }y_1,y_2\in\h\,, \,-v_i \in \partial\F(y_i)\textrm{ for }i=1,2\,.
    \end{equation}
    Indeed, by  \eqref{subdiflambdaconvex},  we have 
    $$
     \F(y)-\F(y_1)+ \langle v_1, y-y_1 \rangle_\h \geq - \frac{\lambda}{2}\,  \vert y-y_1 \vert_\h^2\,, \quad y \in\h\,,
     $$
which, for $y=y_2$ implies
    \begin{equation}\label{formulaasd123123}
    \F(y_2)-\F(y_1)+ \langle v_1, y_2-y_1 \rangle_\h \geq - \frac{\lambda}{2}\, \vert y_2-y_1 \vert_\h^2;
    \end{equation}
   analogously
    \begin{equation}\label{formulaasd12341234}
   \F(y_1)-\F(y_2)+ \langle v_2, y_1-y_2 \rangle_\h \geq - \frac{\lambda}{2}\, \vert y_2-y_1 \vert_\h^2\,.
    \end{equation}
   Therefore, \eqref{monsotdiff}  follows by summing \eqref{formulaasd123123} and \eqref{formulaasd12341234}. 
   
   Finally, by formula \eqref{monsotdiff} we have 
    $$ \frac{\ud}{\ud t} \vert x_1(t)-x_2(t) \vert_\h^2 = 2 \langle \dot{x}_1(t)-\dot{x}_2(t), x_1(t)-x_2(t) \rangle_\h\leq  2  \lambda\, \vert x_1(t)-x_2(t) \vert_\h^2 \quad \text{  for  a.e. $t \in [0,T]$,}\, $$
    which, by Gronwall's Lemma, implies 
    $$ \vert x_1(t)-x_2(t)\vert_\h^2 \leq \vert x_0 -x_0 \vert_\h e^{ 2  \lambda t}=0 \quad \text{  for  a.e. $t \in [0,T]$\,,} $$
    i.e., $x_1(t)= x_2(t)$ for a.e. $t \in [0,T]$.  We notice that the solution is in $C^{0,\frac12}([0,T];\h)$ by the Sobolev embedding of $H^1([0,T];\h)$ into $C^{0,\frac12}([0,T];\h)$, so that $x_1(t)= x_2(t)$ when passing to the continuous representatives. 
    \vskip5pt
    \textit{ Existence.} 
    We first prove that for every $T>0$ the functions $x^\tau$ defined in \eqref{funzdeftratti} converge (as $\tau\to 0$) weakly in $H^1([0,T];\h)$ to some function $x\in H^1([0,T];\h)$
    and then we show that the limit $x$ satisfies \eqref{probcauchy}  up to time $T$.   
    
By \eqref{movmin} we have that 
    \begin{equation}\label{minimalitàthmmain}
   \F(x_{k+1}^{\tau})+ \frac{1}{2 \tau} \vert x_{k+1}^{\tau}-x_{k}^{\tau}  \vert_\h^2 \leq \F(x_k^\tau), \quad \text{for every $k \in \mathbb{N}$\,,}
   \end{equation}
   which together with the $\lambda$-positivity of $\F$ implies that
   \begin{equation}\label{useful}
   \begin{aligned}
&    \sum_{k=0}^{K} \frac{1}{\tau} \left \vert x^{\tau}_{k+1}-x^{\tau}_k   \right \vert_\h^2   \leq2\sum_{k=0}^{K}\left  ( \F(x^{\tau}_k)-\F(x^{\tau}_{k+1}) \right)    \\
   =& 2(\F(x^{\tau}_0)-\F(x^{\tau}_{K+1})) \\
  =& 2\Big(\F(x^{\tau}_0)+ \frac{\lambda}{2}\vert x^{\tau}_{K+1}\vert_\h^2 - \frac{\lambda}{2}\vert x^{\tau}_{K+1}\vert_\h^2-  \F(x^{\tau}_{K+1})\Big) \\
  \leq &2\left ( \F(x_0)+ \frac{\lambda}{2} \vert x^{\tau}_{K+1}\vert_\h^2\right)\qquad\textrm{for every }K\in\N\,.
  \end{aligned}
   \end{equation}
   Set $\hat T=\frac{1}{8\lambda}$ and let $0<\tau\le\frac{1}{16\lambda}$\,. 
   We set $\hat K:= \left \lceil \frac{\hat T}{\tau} \right \rceil $\,; by \eqref{useful},
   we have
   \begin{equation}\label{bound1}
   \begin{split}
   \int_{0}^{\hat T}  \vert \dot x^{\tau}(t) \vert_\h^2 \ud t  \le&  \sum_{k=0}^{\hat K} \frac{1}{\tau} \left \vert x^{\tau}_{k+1}-x^{\tau}_k   \right \vert_\h^2 
  \leq 
  2\left ( \F(x_0)+ \frac{\lambda}{2} \vert x^{\tau}_{\hat K+1}\vert_\h^2\right)\,.
   \end{split}
\end{equation}
Moreover, by triangle and Jensen inequalities and using again \eqref{useful}, we get 
\begin{equation*}%\label{bound2}
\begin{aligned}
&\frac{1}{2}|x_{\hat K+1}^\tau|_\h^2-|x_0|_\h^2\le |x_{\hat K+1}^\tau-x_0|^2_\h\\
 \leq& \tau(\hat K+1)\sum_{k=1}^{\hat K+1} \frac{1}{\tau}\left\vert x^{\tau}_k- x^{\tau}_{k-1} \right\vert_\h^2 
 =\tau(\hat K+1)\sum_{k=0}^{\hat K} \frac{1}{\tau}\left\vert x^{\tau}_{k+1}- x^{\tau}_{k} \right\vert_\h^2\\
 \le& 2(\hat T+2\tau)\Big(\F(x_0)+\frac{\lambda}{2}|x_{\hat K+1}^\tau|_\h^2\Big)\,,
\end{aligned}
\end{equation*}
which, recalling that $0<2\tau\le\frac{1}{8\lambda}=\hat T$\,, implies that
\begin{equation}\label{bound3}
\vert x^{\tau}_{\hat K+1}\vert^2_{ \h } \leq \frac{2}{\lambda}\F(x_0)+4|x_0|^2_\h\,.
\end{equation}
By \eqref{bound1} and \eqref{bound3}, we have that, for every $\tau$ small enough, 
\begin{equation}\label{bound10}
\|\dot{x}^\tau\|^2_{L^2((0,\hat{T});\h)}\le 4(\F(x_0)+\lambda|x_0|_\h^2)\,.
\end{equation}
Iterating the estimates in \eqref{bound3} and \eqref{bound10}, using also that $\F(x^\tau_k)$ is not increasing with respect to $k$, we deduce that for every $j\in\N$
\begin{equation*}
\begin{split}
|x^\tau(j\hat{T})|_\h^2& \le 4^j\Big(\frac{1}{\lambda} |\F(x_0)|+|x_0|_\h^2\Big),\\
\|\dot{x}^\tau\|^2_{L^2((0,j\hat T);\h)}& \le 4^{j+3}(|\F(x_0)|+\lambda|x_0|_\h^2)\,.
\end{split}
\end{equation*}
In particular, for every $T>0$, we have that
\begin{equation}\label{bound100}
\|\dot{x}^\tau\|^2_{L^2((0,T);\h)}\le 4^{8\lambda T+4}(|\F(x_0)|+\lambda|x_0|_\h^2)\,.
\end{equation}
Therefore, for every $T>0$, $\|x^\tau\|_{H^1([0,T];\h)}$ is uniformly bounded and hence, up to a subsequence, $x^\tau\weakly x$ in $H^1([0,T];\h)$ for some $x\in H^1([0,T];\h)$\,; this, in particular, implies the convergence in $C^{0,\frac{1}{2}}([0,T];\h)$ and hence that $x(0)=x_0$\,.
 Passing to the limit in \eqref{bound100} we readily get \eqref{dopo0}.

\vskip5pt
Now we aim at proving that $x$ solves \eqref{probcauchy}  up to time $T$, 
for every $T>0$\,, that is 
\begin{equation}\label{eqpc}
 \dot{x}(t) \in -\partial \F(x(t))\qquad\textrm{ for almost every }t\in (0,T)\,.
\end{equation}
To this end, we define the piecewise-constant interpolation $\tilde x^\tau:[0,+\infty)\to \h$ of $\{x_k^\tau\}_{k\in\N}$ as 
\begin{equation*}
%\label{formasdasd123} 
\tilde x ^\tau(t):= x_{k+1}^\tau, \quad t \in [k\tau, (k+1)\tau)\,,
\end{equation*}
and we notice that, by minimality, for $\tau$ small enough, 
\begin{equation}\label{proprietà}
\dot{x}^\tau(t)\in -\partial \F(\tilde x^\tau(t))\qquad\textrm{for almost every }t\in [0,+\infty)\,.
\end{equation}
We claim that 
\begin{equation}\label{ctilde}
\tilde x^{\tau}\overset{\h}{\rightarrow} x\quad\textrm{ in }L^2((0,T);\h)\,,\quad\textrm{ for every }T>0\,.
\end{equation} 
Indeed, by triangle inequality, we have that
\begin{equation}\label{formasdasd111}
\begin{split}
 \| \tilde{x}^{\tau}-x \|^2_{L^2((0,T);\h)} & \leq 2\| x^{\tau}-x \|^2_{L^2((0,T);\h)} + 2\| \tilde{x}^{\tau}-x^{\tau} \|^2_{L^2((0,T);\h)}  \\
 & \leq 2\| x^{\tau}-x \|^2_{L^2((0,T);\h)} +2\tau^2 \|\dot{x}^\tau\|^2_{L^2((0,T);\h)}\,,
 \end{split}
\end{equation}
where in the last inequality we have used that 
$$ 
x^\tau(t)-\tilde{x}^{\tau}(t)= \frac{x_{k+1}^\tau- x_{k}^{\tau}}{\tau}(t-(k+1)\tau)=\dot{x}^\tau(t)(t-(k+1)\tau), \quad\textrm{for every } t \in (k\tau,(k+1)\tau)\,.
$$
Therefore,  by \eqref{formasdasd111}   and \eqref{dopo0} we get
\begin{equation*}
\| \tilde{x}^{\tau}-x \|_{L^2((0,T);\h)}^2 \leq
2\| x^{\tau}-x \|^2_{L^2((0,T);\h)} + 2\tau^2 4^{8\lambda T+4} (|\mathcal{F}(x_0)|+ \lambda \vert x_0 \vert_{\h}^2)\,,
\end{equation*} 
 which, sending $\tau\to 0$ and  recalling  that $x^\tau\weakly x$ in $H^1([0,T];\h)$, implies \eqref{ctilde}.
 
 With \eqref{ctilde} on hand, we are in a position to prove \eqref{eqpc}.
 Let $t_0 \in (0,T)$ be a Lebesgue point of the function $ \dot{x}:[0,T)\to \h$. By 
 \eqref{proprietà}, we have that
\begin{equation}\label{form123asdasd}
\F(y)\ge\F(\tilde{x}^{\tau}(t))-\langle \dot{x}^{\tau}(t), y- \tilde{x}^{\tau}(t) \rangle_{\h}- \frac{\lambda}{2} \vert y-\tilde{x}^{\tau}(t) \vert_{\h}^2\,\qquad\textrm{for every }y\in\h\,. 
\end{equation}
Let $y\in\h$ and $h>0$\,; by integrating \eqref{form123asdasd} in the interval $(t_0, t_0+h)$ and dividing by $h$\,, we obtain
 $$ 
 \F(y) \geq \frac{1}{h} \int_{t_0}^{t_0+h}\F(\tilde{x}^{\tau}(t))\ud t   -\frac{1}{h} \int_{t_0}^{t_0+h} \langle \dot{x}^{\tau}(t), y- \tilde{x}^{\tau}(t) \rangle_{\h}\ud t- \frac{1}{h} \int_{t_0}^{t_0+h} \frac{\lambda}{2} \vert y-\tilde{x}^{\tau}(t) \vert_{\h}^2 \ud t \,, 
 $$ 
which, sending $\tau\to0$\,, and using the strong lower semicontinuity of $\F$\,, the weak $L^2$-convergence of $\dot x^\tau$ to $\dot x$\,, and \eqref{ctilde}, yields
 $$ 
 \F(y) \geq \frac{1}{h} \int_{t_0}^{t_0+h}\F({x}(t))\ud t   -\frac{1}{h} \int_{t_0}^{t_0+h} \langle \dot{x}(t), y- {x}(t) \rangle_{\h}\ud t- \frac{1}{h} \int_{t_0}^{t_0+h} \frac{\lambda}{2} \vert y-{x}(t) \vert_{\h}^2 \ud t \,.
 $$ 
Now, since $x\in C^{0,\frac 1 2}([0,T];\h)$ and since $t_0$ is a Lebesgue point for $ \dot{x} $\,, sending $h\to 0$ in the  formula above, and using again that $\F$ is strongly lower semicontinuous, by the arbitrariness of $y$\,,  we get \eqref{eqpc}.

\vskip5pt
Finally, we prove that \eqref{dopo} holds true. Let $\eta^\tau:[0,+\infty)\to (0,\tau]$ be the function defined by $\eta^\tau(t)=(k+1)\tau-t$ for every $t\in[k\tau,(k+1)\tau)$\,.
 By \eqref{proprietà}, 
\begin{equation*}%\label{1perdopo}
\dot{x}^\tau(t)\in-\partial\F(x^{\tau}(t+\eta^\tau(t)))\qquad\textrm{ for every }t>0\,,
\end{equation*}
which, using \eqref{probcauchy} and \eqref{monsotdiff}, yields 
% (we write below $\dot{x}$ for $x'$)  
\begin{eqnarray*}
\frac{\ud}{\ud t}|x(t)-x^\tau(t)|^2_\h&=&2\langle x(t)-x^\tau(t),\dot{x}(t)-\dot{x}^\tau(t) \rangle_\h\\
&=&2\langle x(t)-x^\tau(t+\eta^\tau(t)),\dot{x}(t)-\dot{x}^\tau(t) \rangle_\h\\
&&+2\langle x^\tau(t+\eta^\tau(t))-x^\tau(t),\dot{x}(t)-\dot{x}^\tau(t) \rangle_\h\\
&\le& 4  \lambda|x(t)-x^\tau(t+\eta^\tau(t))|^2_\h+2 |x^\tau(t+\eta^\tau(t))-x^\tau(t)|_\h|\dot{x}(t)-\dot{x}^\tau(t)|_\h\\
&\le& 8 \lambda|x(t)-x^\tau(t)|^2_\h+ 8 \lambda |x^\tau(t+\eta^\tau(t))-x^\tau(t)|^2_\h\\
&&+2|x^\tau(t+\eta^\tau(t))-x^\tau(t)|_\h(|\dot{x}(t)|_\h+|\dot{x}^\tau(t)|_\h)\\
&\le& 8 \lambda|x(t)-x^\tau(t)|^2_\h+ \tau(8\lambda\tau+3)  (|\dot{x}^\tau(t)|^2_\h+|\dot{x}(t)|_\h^2)\,,\\
&\le& 8 \lambda|x(t)-x^\tau(t)|^2_\h+ 4 \tau (|\dot{x}^\tau(t)|^2_\h+|\dot{x}(t)|_\h^2)\,,
\end{eqnarray*}
where in the last inequality we have used that $\tau\le\frac{1}{16\lambda}$  (recall also 
 $|x^\tau(t+\eta^\tau(t))-x^\tau(t)|_\h=\eta^\tau(t)|\dot{x}^\tau(t)|_\h\leq \tau |\dot{x}^\tau(t)|_\h$) .
By integrating the equation above and using  \eqref{dopo0} and \eqref{bound100}, we get
\begin{equation*}
\begin{aligned}
|x(t)-x^\tau(t)|^2_\h\le& \int_{0}^t 8\lambda|x(s)-x^\tau(s)|^2_\h\ud s+4\tau\int_{0}^t(|\dot{x}^\tau(s)|^2_\h+|\dot{x}(s)|_\h^2)\ud s\\
\le&\int_{0}^t 8\lambda|x(s)-x^\tau(s)|^2_\h\ud s+8\tau 4^{8\lambda t+4}(|\F(x_0)|+\lambda|x_0|_\h^2)\,.
\end{aligned}
\end{equation*}
Setting $\alpha(s):=8\tau 4^{8\lambda s+4}(|\F(x_0)|+\lambda|x_0|_\h^2)$ and $\beta(s):=8\lambda$ for every $s>0$\,, we have
\begin{equation*}
\begin{aligned}
|x(t)-x^\tau(t)|^2_\h\le& \int_{0}^t \beta(s)|x(s)-x^\tau(s)|^2_\h\ud s+\alpha(t)\,.
\end{aligned}
\end{equation*}
Noticing that $\alpha$ is non-decreasing, by Gronwall Lemma we get
\begin{equation*}
\begin{aligned}
|x(t)-x^\tau(t)|^2_\h\le& \alpha(t)e^{\int_{0}^t\beta(s)\ud s}=8\tau 4^{8\lambda t+4}(|\F(x_0)|+\lambda|x_0|_\h^2)e^{8\lambda t}\,,
\end{aligned}
\end{equation*}
thus providing \eqref{dopo}.
\end{proof}
%%%%%%%%%%%%%%%%%%%%%%%%%%%%%
%%%%%%%%%%%%%%%%%%%%%%%%%%%%%
%%%%%%%%%%%%%%%%%%%%%%%%%%%%%
For every vectorial space $\mathscr{V}$  we denote by $\mathscr{V}^*$ the algebraic dual space of $\mathscr{V}$ and by $\mathscr{V}'$ the topological dual space of $\mathscr{V}$. 
\begin{proposition}\label{singleton}
Let $\F:\h\to(-\infty,+\infty]$ be a proper lower semicontinuous function which is $\lambda$-convex, for some $\lambda>0$ and let $x\in D(\F)$\,.
Let $\hat\h$ be a dense subspace of $\h$\,.
If there exists $T\in(\hat\h)^*$ such that 
\begin{equation}\label{grad}
\lim_{t\to 0}\frac{\F(x+t\ffi)-\F(x)}{t}=T(\ffi)\qquad\textrm{for every }\ffi\in\hat\h\,,
\end{equation}
then, either $\partial\F(x)=\emptyset$ or $\partial\F(x)=\{v\}$\,, where $v$ is the (unique) element in $\h$  satisfying $T(\ffi)=\langle v,\ffi\rangle_\h$ for every $\ffi\in\hat\h$\,. In particular,
$T\in (\hat\h)'$ and $v$ is its  unique continuous extension to $\h'$.
\end{proposition}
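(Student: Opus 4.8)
The plan is to show that any element of $\partial\F(x)$ is necessarily forced to represent the linear functional $T$ on $\hat\h$ through the inner product, and then to use the density of $\hat\h$ in $\h$ to deduce that such an element is unique.

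\emph{Step 1: every subgradient represents $T$ on $\hat\h$.} Assume $\partial\F(x)\neq\emptyset$ and pick $w\in\partial\F(x)$. Since $\F$ is $\lambda$-convex, Remark~\ref{rem:subdiflambdaconvex} (formula \eqref{subdiflambdaconvex}) gives, for every $y\in\h$,
\begin{equation*}
\F(y)-\F(x)-\langle w,y-x\rangle_\h\ge-\frac{\lambda}{2}|y-x|_\h^2\,.
\end{equation*}
Fix $\ffi\in\hat\h$. Since the limit in \eqref{grad} is a real number, we have $\F(x+t\ffi)<+\infty$ for all $t$ small enough; for such $t\neq0$, plugging $y=x+t\ffi$ into the inequality above yields
\begin{equation*}
\F(x+t\ffi)-\F(x)-t\langle w,\ffi\rangle_\h\ge-\frac{\lambda}{2}t^2|\ffi|_\h^2\,.
\end{equation*}
Dividing by $t>0$ and letting $t\to0^+$, and then dividing by $t<0$ (which reverses the inequality) and letting $t\to0^-$, and using \eqref{grad} in both cases, we obtain $T(\ffi)-\langle w,\ffi\rangle_\h\ge0$ and $T(\ffi)-\langle w,\ffi\rangle_\h\le0$; hence $\langle w,\ffi\rangle_\h=T(\ffi)$ for every $\ffi\in\hat\h$.

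\emph{Step 2: uniqueness via density, and the dual assertion.} If $w_1,w_2\in\partial\F(x)$, then by Step 1 $\langle w_1-w_2,\ffi\rangle_\h=0$ for every $\ffi\in\hat\h$; since $\hat\h$ is dense in $\h$ and the inner product is continuous, $\langle w_1-w_2,z\rangle_\h=0$ for all $z\in\h$, and taking $z=w_1-w_2$ gives $w_1=w_2$. Therefore $\partial\F(x)$ is either empty or a singleton $\{v\}$, and in the latter case $v$ is, by Step 1, an element of $\h$ with $\langle v,\ffi\rangle_\h=T(\ffi)$ on $\hat\h$, unique with this property again by density. Finally, in this case the identity $T(\ffi)=\langle v,\ffi\rangle_\h$ on $\hat\h$ exhibits $T$ as the restriction to $\hat\h$ of the bounded linear functional $\langle v,\cdot\rangle_\h$ on $\h$, so $T\in(\hat\h)'$, and $\langle v,\cdot\rangle_\h$ (equivalently, via Riesz representation, the vector $v$) is its unique continuous extension to $\h$.

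The argument is essentially routine; the only point that needs care is to exploit both one-sided limits $t\to0^{\pm}$ in order to pin down $\langle w,\ffi\rangle_\h$ as an equality rather than a one-sided bound, together with the observation that the $\lambda$-convex subdifferential inequality is vacuous (hence harmless) whenever $\F(x+t\ffi)=+\infty$.
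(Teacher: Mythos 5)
Your proof is correct and follows essentially the same route as the paper's: use the $\lambda$-convex characterization of the subdifferential from Remark~\ref{rem:subdiflambdaconvex} with $y=x+t\ffi$, divide by $t$ of both signs and pass to the limit via \eqref{grad} to get $\langle w,\ffi\rangle_\h=T(\ffi)$ on $\hat\h$, then conclude by density. Your write-up is in fact slightly more careful than the paper's on the two one-sided limits and on the finiteness of $\F(x+t\ffi)$ for small $t$.
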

%%%%%%%%%%%%%%%%%%%%%%%%%%%%%
%%%%%%%%%%%%%%%%%%%%%%%%%%%%%
%%%%%%%%%%%%%%%%%%%%%%%%%%%%%
\begin{proof}
Since $\hat\h$ is dense in $\h$, in order to get the claim it is enough to prove that for every $v\in\partial\F(x)$
\begin{equation}\label{newcl}
\langle v,\ffi\rangle_\h=T(\ffi)\qquad\textrm{for every }\ffi\in\hat\h\,.
\end{equation}
To this purpose, we notice that every $v\in\partial\F(x)$ satisfies
\begin{equation*}
\F(x+t\ffi)-\F(x)-t\langle v,\ffi\rangle_\h\ge -t^2\frac{\lambda}{2}|\ffi|^2_\h\qquad\textrm{for every }\ffi\in\hat\h,\,t\in\R\,, 
\end{equation*}
which, dividing by $t$\,, yields
$$
\lim_{t\to 0}\frac{\F(x+t\ffi)-\F(x)}{t}=\langle v,\ffi\rangle_\h \qquad\textrm{for every }\ffi\in\hat\h\,;
$$
therefore, in view of \eqref{grad}, we get \eqref{newcl}.
\end{proof}
%%%%%%%%%%%%%%%%%%%%%%%%%%%%%
%%%%%%%%%%%%%%%%%%%%%%%%%%%%%
%%%%%%%%%%%%%%%%%%%%%%%%%%%%%
The following Theorem provides a  convergence result for  gradient-flows  associated to a  $\Gamma$-converging sequence   of functions satisfying the assumptions of Theorem~\ref{exist}.
 %%%%%%%%%%%%%%%%%%%%%%%
 %%%%%%%%%%%%%%%%%%%%%%%
 %%%%%%%%%%%%%%%%%%%%%%%
\begin{theorem}\label{genstab}
Let $\{\F^n\}_{n\in\N}$ with $\F^n:\h\to(-\infty,+\infty]$ for every $n\in\N$ be a sequence of proper, strongly lower semicontinuous functions which are $\lambda$-convex and $\lambda$-positive, for some $\lambda>0$ independent of $n$\,. Let $\{x_0^n\}_{n\in\N}\subset\h$ be such that $x_0^n\in D(\F^n)$ for every $n\in\N$\,, $S:=\sup_{n\in\N}\F^n(x_0^n)<+\infty$ and $x_0^n\to x^\infty_0$
for some $x^\infty_0\in\h$\,. Assume that one of the following statements is satisfied:
\begin{itemize}
\item[(a)] The functions $\F^n$ are equicoercive and  $\Gamma$-converge to some proper function $\F^\infty$   with respect to the weak $\h$-convergence (as $n\to +\infty$)\,. Moreover, the $\Gamma$-limsup inequality is satisfied with respect to the strong $\h$-convergence, i.e., for every $y\in\h$ there exists a sequence $\{y^n\}_{n\in\N}$ with $y^n\overset{\h}{\to} y$ such that $\F^n(y^n)\to \F^{\infty}(y)$ as $n\to +\infty$\,.
\item[(b)] The functions $\F^n$ $\Gamma$-converge to some proper function $\F^\infty$ with respect to the strong $\h$-convergence (as $n\to +\infty$) and every sequence $\{y^n\}_{n\in\N}\subset\h$ with $\sup_{n\in\N}\F^n(y^n)+\frac{\lambda}{2}|y^n|_\h^2<+\infty$\,, admits a strongly convergent subsequence.
\end{itemize}
Then, $x_0^\infty\in D(\F^{\infty})$ and, for every $T>0$\,, the solutions $x^n$ to the Cauchy problem
\begin{equation}\label{caun}
\begin{cases}
 \dot{x}(t)\in -\partial \F^n(x(t))  \qquad \text{for a.e.\ }t \in (0,T)\,, \\
x(0)=x_0^n
\end{cases}
\end{equation}  
weakly converge, as $n\to+\infty$\,, in $H^1([0,T];\h)$ to the unique solution $x^\infty$ to the problem
\begin{equation}\label{cauninf}
\begin{cases}
 \dot{x}(t)\in -\partial \F^\infty(x(t)) \qquad \text{for a.e.\ }t \in (0,T)\,, \\
x(0)=x^\infty_0\,.
\end{cases}
\end{equation}  
Furthermore, if 
\begin{equation}\label{reco0}
\lim_{n\to +\infty} \F^n(x^n_0)=\F^\infty(x^\infty_0)\,,
\end{equation} 
then,  we have that 
\begin{equation}\label{strongc}
x^n\to x^\infty\qquad\textrm{(strongly) in }H^1([0,T];\h)\quad\textrm{ for every }T>0\,,
\end{equation}
%and
\begin{equation}\label{alwaysrs}
x^n(t)\overset{\h}{\to}x^{\infty}(t)\quad\textrm{and}\quad\F^n(x^n(t))\to\F^{\infty}(x^{\infty}(t))\qquad\textrm{for every }t\ge 0\,. 
\end{equation}
\end{theorem}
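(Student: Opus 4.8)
The plan is to use Theorem~\ref{exist} as a black box: for each $n$ (and for the limit) it gives existence, uniqueness, the weak-$H^1$ convergence of the discrete-in-time scheme $x^{n,\tau}$, and — crucially — the quantitative estimates \eqref{dopo0} and \eqref{dopo} \emph{with constants depending only on $\lambda$, $T$, and the initial energy/norm}, which are uniform in $n$ by hypothesis (since $\lambda$ is fixed and $\sup_n(|\F^n(x_0^n)|+\lambda|x_0^n|_\h^2)<+\infty$, using $S<+\infty$, $\lambda$-positivity, and $x_0^n\to x_0^\infty$). First I would establish $x_0^\infty\in D(\F^\infty)$: under (a) this follows from the strong $\Gamma$-limsup at $x_0^\infty$ together with equicoercivity applied to any recovery sequence (or more directly: take the recovery sequence $y^n\to x_0^\infty$ with $\F^n(y^n)\to\F^\infty(x_0^\infty)$, which is finite since $\F^\infty$ is proper and $\lambda$-positive — actually we must argue $\F^\infty(x_0^\infty)<+\infty$, which is exactly what the strong limsup guarantees if we can produce \emph{some} sequence with bounded energy; the hypothesis $S<+\infty$ is not directly about $x_0^\infty$, so I would instead invoke the $\Gamma$-liminf along $x_0^n\to x_0^\infty$: $\F^\infty(x_0^\infty)\le\liminf\F^n(x_0^n)\le S<+\infty$, valid for the weak convergence in case (a) and the strong in case (b)). Then by Theorem~\ref{exist} the limit problem \eqref{cauninf} has a unique solution $x^\infty$.

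Next I would prove the weak-$H^1$ convergence $x^n\weakly x^\infty$. From \eqref{dopo0} applied to each $\F^n$ with the uniform energy bound, $\{x^n\}$ is bounded in $H^1([0,T];\h)$ (the $L^2$ bound on $x^n$ itself comes from $|x^n(t)-x^n(0)|_\h\le\sqrt T\|\dot x^n\|_{L^2}$ and $x^n(0)=x_0^n$ bounded), so up to subsequence $x^n\weakly x$ in $H^1$, hence $x^n\to x$ in $C^{0,1/2}([0,T];\h)$, giving $x(0)=x_0^\infty$. To identify $x$ as a solution of \eqref{cauninf} I would mimic the passage-to-the-limit argument in the proof of Theorem~\ref{exist}: use the subdifferential inequality $\F^n(y)\ge\F^n(x^n(t))-\langle\dot x^n(t),y-x^n(t)\rangle_\h-\tfrac\lambda2|y-x^n(t)|_\h^2$, valid for a.e.\ $t$ since $-\dot x^n(t)\in\partial\F^n(x^n(t))$, integrate over a small interval $(t_0,t_0+h)$ around a common Lebesgue point, and pass $n\to\infty$. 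Here the $\Gamma$-liminf inequality handles the term $\liminf\F^n(x^n(t))\ge\F^\infty(x(t))$ (using $x^n(t)\to x(t)$ strongly — hence also weakly, covering both cases (a) and (b)), the weak $L^2$-convergence of $\dot x^n$ handles the linear term, and strong $C^0$ convergence handles the quadratic term; to get a usable upper bound on the left we replace $y$ in $\F^n(y)$ by a strong recovery sequence $y^n\to y$ with $\F^n(y^n)\to\F^\infty(y)$ (available in case (a) by assumption, and in case (b) from the strong $\Gamma$-limsup), yielding $\F^\infty(y)\ge\F^\infty(x(t_0))-\langle\dot x(t_0),y-x(t_0)\rangle_\h-\tfrac\lambda2|y-x(t_0)|_\h^2$ after $h\to0$, i.e.\ $-\dot x(t_0)\in\partial\F^\infty(x(t_0))$. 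By uniqueness for \eqref{cauninf}, $x=x^\infty$ and the whole sequence converges. \textbf{The main obstacle} is precisely this limit passage — ensuring one can take a single interval that is simultaneously a Lebesgue point for all the relevant functions and, more importantly, that strong $C^0$-convergence of $x^n$ is genuinely available: in case (b) this is immediate from $x^n\to x$ in $C^{0,1/2}$, but one must double-check that in case (a), where $\Gamma$-convergence is only w.r.t.\ weak $\h$, the compactness bound \eqref{dopo0} still yields \emph{strong} $C^{0,1/2}$ convergence of $x^n$ — it does, because the bound is in $H^1([0,T];\h)$ with \emph{strong} norm, independent of the topology used for $\Gamma$-convergence, so the Aubin–Lions-type compact embedding $H^1([0,T];\h)\hookrightarrow\hookrightarrow C^{0}([0,T];\h)$ is not available (infinite-dimensional $\h$!) — rather one uses that $H^1([0,T];\h)\hookrightarrow C^{0,1/2}([0,T];\h)$ continuously and the weak $H^1$ limit has a continuous representative, and the equicontinuity $|x^n(t)-x^n(s)|_\h\le C|t-s|^{1/2}$ plus pointwise weak convergence; pointwise \emph{strong} convergence $x^n(t)\to x^\infty(t)$ then needs equicoercivity of $\F^n$ applied to $x^n(t)$ (whose energy is bounded by $\F^n(x_0^n)\le S$ via energy monotonicity along the flow) to upgrade weak to strong — this is the one place where equicoercivity in (a) and the compactness hypothesis in (b) are genuinely used for the identification, and I would flag it as the delicate point.

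Finally, under \eqref{reco0} I would prove the strong statements \eqref{strongc}–\eqref{alwaysrs}. The key is the energy–dissipation balance: for the gradient flow of a $\lambda$-convex functional one has, for a.e.\ $t$, $\frac{\ud}{\ud t}\F^n(x^n(t))=-|\dot x^n(t)|_\h^2$ (since $-\dot x^n\in\partial\F^n(x^n)$ and the chain rule holds for $\lambda$-convex functionals along absolutely continuous curves), hence $\F^n(x^n(t))+\int_0^t|\dot x^n|_\h^2=\F^n(x_0^n)$, and likewise for $x^\infty$. Taking $\liminf_n$ and using \eqref{reco0}, the $\Gamma$-liminf $\liminf\F^n(x^n(t))\ge\F^\infty(x^\infty(t))$, and weak $L^2$-lower-semicontinuity of $\int_0^t|\dot x^n|_\h^2$, we get
\begin{equation*}
\F^\infty(x^\infty(t))+\int_0^t|\dot x^\infty|_\h^2\,\ud s \le \liminf_n\Big(\F^n(x^n(t))+\int_0^t|\dot x^n|_\h^2\,\ud s\Big)=\lim_n\F^n(x_0^n)=\F^\infty(x_0^\infty),
\end{equation*}
and since the left side equals the right by the balance for $x^\infty$, all inequalities are equalities. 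This forces $\F^n(x^n(t))\to\F^\infty(x^\infty(t))$ and $\int_0^t|\dot x^n|_\h^2\to\int_0^t|\dot x^\infty|_\h^2$ for every $t$; combined with $\dot x^n\weakly\dot x^\infty$ in $L^2$ this gives $\dot x^n\to\dot x^\infty$ strongly in $L^2([0,T];\h)$, and together with $x^n(0)=x_0^n\to x_0^\infty$ strongly, we obtain \eqref{strongc}. The pointwise strong convergence $x^n(t)\to x^\infty(t)$ in \eqref{alwaysrs} follows from strong $C^0$-convergence (consequence of strong $H^1$-convergence), and $\F^n(x^n(t))\to\F^\infty(x^\infty(t))$ has just been shown. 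A minor technical point to check: the chain rule / energy identity for $\lambda$-convex functionals along $H^1$ curves — this is classical (see \cite{AGS}) and can be cited, or derived from the discrete energy inequality \eqref{minimalitàthmmain} passed to the limit.
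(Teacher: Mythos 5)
Your plan is workable for case (b) and your treatment of the second half of the statement (the energy--dissipation identity, the chain of inequalities under \eqref{reco0}, and the upgrade from weak to strong $H^1$ convergence via convergence of the norms $\int_0^t|\dot x^n|^2_\h\ud s$) is essentially the paper's argument. The genuine gap is in case (a), exactly at the point you flag and then resolve incorrectly. To pass to the limit in the integrated subdifferential inequality you need the terms $-\langle\dot x^n(t),y^n-x^n(t)\rangle_\h-\frac{\lambda}{2}|y^n-x^n(t)|^2_\h$ to converge from above, and because of the sign of the quadratic term and the product structure of the linear term this requires $x^n(t)\to x(t)$ \emph{strongly} (at least in $L^2$ on the small time interval); weak pointwise convergence is not enough. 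Your proposed fix, ``equicoercivity of $\F^n$ applied to $x^n(t)$ \dots to upgrade weak to strong,'' does not work: in the paper's terminology equicoercivity only means that the sublevels of $\F^n+\frac{\lambda}{2}|\cdot|^2_\h$ are bounded, and bounded sets of an infinite-dimensional Hilbert space are weakly, not strongly, compact. In case (a) --- which is the case actually needed for the zeroth-order flow as $s\to0^+$, where no strong compactness is available --- nothing in the hypotheses directly converts the weak pointwise convergence of $x^n(t)$ into strong convergence, so your identification of the limit flow is not justified there.

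The paper obtains precisely this strong pointwise convergence by a different mechanism, which is the ingredient missing from your plan: it first proves, for every fixed $\tau$ and $k$, that the discrete minimizing-movement iterates converge, $\F^n(x^{n,\tau}_k)\to\F^\infty(x^{\infty,\tau}_k)$ and $|x^{n,\tau}_k-x^{\infty,\tau}_k|_\h\to0$. In case (a) the strong convergence here is produced by the variational structure of the incremental problem: the weak limit of $x^{n,\tau}_1$ is identified with the unique minimizer $x^{\infty,\tau}_1$ by comparison with a strong recovery sequence, and since every inequality in the resulting chain must be an equality one also gets $|x^{n,\tau}_1-x^n_0|_\h\to|x^{\infty,\tau}_1-x^\infty_0|_\h$, which together with weak convergence yields strong convergence (assumption (a) is Mosco convergence, and this is an Attouch-type resolvent-convergence argument). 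The quantitative, $n$-uniform estimate \eqref{dopo} from Theorem~\ref{exist} is then used in the triangle inequality $|x^n(t)-x^\infty(t)|_\h\le|x^n(t)-x^{n,\tau}(t)|_\h+|x^{n,\tau}(t)-x^{\infty,\tau}(t)|_\h+|x^{\infty,\tau}(t)-x^\infty(t)|_\h$, sending first $n\to+\infty$ and then $\tau\to0$; this gives $x^n(t)\to x^\infty(t)$ strongly for every $t$ and identifies the weak $H^1$ limit without ever passing to the limit in the subdifferential inclusion for the continuous flows. To repair your proposal you should either restrict the direct argument to case (b) (where the compactness hypothesis plus energy monotonicity does give strong pointwise convergence) or replace the ``equicoercivity upgrade'' in case (a) by this discrete-scheme/resolvent argument.
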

%%%%%%%%%%%%%%%%%%%%%%%%%%%%
%%%%%%%%%%%%%%%%%%%%%%%%%%%%
%%%%%%%%%%%%%%%%%%%%%%%%%%%%
\begin{proof}
We preliminarily notice that, if either (a) or (b) is satisfied, then the function $\F^\infty$ is strongly lower semicontinuous, $\lambda$-convex and $\lambda$-positive and $x^\infty_0\in D(\F^\infty)$\,.
 Moreover, by Theorem~\ref{exist}, for every $n \in \N$ there exists a unique solution $x^n$ to \eqref{caun}. 

Let $0<\tau<\frac{1}{2\lambda}$ and let $\{x^{\infty,\tau}_k\}_{k\in\N}$ denote the discrete-in-time evolution in \eqref{movmin} for $x_0:=x_0^{\infty}$ and $\F:=\F^{\infty}$\,.
Analogously, for every $n\in\N$\,, let $\{x^{n,\tau}_k\}_{k\in\N}$ denote the discrete-in-time evolution in \eqref{movmin} for $x_0:=x_0^{n}$ and $\F:=\F^{n}$\,.
By Proposition \ref{exisitminprobincre}, $\{x^{\infty,\tau}_k\}_{k\in\N}$ and $\{x^{n,\tau}_k\}_{k\in\N}$ are uniquely determined.
Furthermore, for every $k\in\N$ we set
\begin{eqnarray*}
\mathcal{I}^{n,\tau}_k(\cdot)&:=&\F^n(\cdot)+\frac{1}{2\tau}|\cdot-x^{n,\tau}_{k-1}|_\h^2\qquad\textrm{for every }n\in\N\,,\\
\mathcal{I}^{\infty,\tau}_k(\cdot)&:=&\F^\infty(\cdot)+\frac{1}{2\tau}|\cdot-x^{\infty,\tau}_{k-1}|_\h^2\,.
\end{eqnarray*}

We first show that, if either (a) or (b) is satisfied, then for every $k\in\N$
\begin{equation}\label{convdisc}
\F^n(x^{n,\tau}_k)\to \F^\infty(x^{\infty,\tau}_k)\quad\textrm{and}\quad|x^{n,\tau}_k-x^{\infty,\tau}_k|_\h\to 0\qquad\textrm{as }n\to +\infty\,.
\end{equation}
 By finite induction, it is enough to show \eqref{convdisc} for $k=1$\,. We distinguish the two cases in which either (a) or (b) holds true. 

Assume first that (a) holds true. 
By the assumptions on $x^n_0$\,, we have that
$$
\mathcal{I}^{n,\tau}_1(x^{n,\tau}_1)\le \F^n(x^n_0)\le S\,,
$$
whence, using that for $\frac{1}{2\tau}>\lambda$ the functions $\mathcal{I}^{n,\tau}_1(\cdot)$ are  equicoercive, we deduce that, up to a subsequence, $x^{n,\tau}_1\overset{\h}{\weakly} y_1$ for some  $y_1\in\h$\,. 
Moreover, since $|x_0^n- x_0^\infty|_\h\to 0$ as $n\to +\infty$ and since the functions $\F^n$ $\Gamma$-converge to the function $\F^\infty$ with respect to the weak $\h$-convergence, we have that
\begin{equation}\label{i}
\mathcal{I}^{\infty,\tau}_1(y_1)\le\liminf_{n\to +\infty}\F^n(x^{n,\tau}_1)+\frac{1}{2\tau}\liminf_{n\to +\infty}|x^{n,\tau}_1-x^{n}_0|_\h^2\le\liminf_{n\to +\infty} \mathcal{I}^{n,\tau}_1(x^{n,\tau}_1)\,.
\end{equation}
Furthermore, since the $\Gamma$-limsup inequality is satisfied with respect to the strong $\h$-convergence, there exists $\{\bar{x}^{n,\tau}_1\}_{n\in\N}\subset\h$ such that 
\begin{equation}\label{ii}
\bar{x}^{n,\tau}_1\overset{\h}{\to} x^{\infty,\tau}_1\quad\textrm{ and }\quad\F^n(\bar{x}^{n,\tau}_1)\to\F(x^{\infty,\tau}_1)\,,
\end{equation} 
where $x^{\infty,\tau}_1$ is the unique solution to the problem \eqref{movmin} with $\F=\F^\infty$ and $k=1$\,.  Therefore, by \eqref{i} and \eqref{ii}, we get
\begin{equation*}%\label{iii}
\begin{aligned}
\mathcal{I}^{\infty,\tau}_1(y_1)\le&\liminf_{n\to +\infty}\F^n(x^{n,\tau}_1)+\frac{1}{2\tau}\liminf_{n\to +\infty}|x^{n,\tau}_1-x_0^n|_\h^2\\
\le&\liminf_{n\to +\infty} \mathcal{I}^{n,\tau}_1(x^{n,\tau}_1)
\le\limsup_{n\to +\infty} \mathcal{I}^{n,\tau}_1(x^{n,\tau}_1)\\
\le&\lim_{n\to+\infty} \mathcal{I}^{n,\tau}_1(\bar{x}^{n,\tau}_1)=\mathcal{I}^{\infty,\tau}_1(x^{\infty,\tau}_1)\,,
\end{aligned}
\end{equation*}
whence, by the minimality of $x^{\infty,\tau}_1$\,, we deduce that all the inequalities above are in fact equalities and, in particular, that $y_1$ is a minimizer of $\mathcal{I}^{\infty,\tau}_1$;  in view of the uniqueness of the minimizer of $\mathcal{I}^{\infty,\tau}_1$, we deduce that $y_1=x^{\infty,\tau}_1$\,. 
By Urysohn Lemma, this implies that the whole sequence $\{x^{n,\tau}_1\}_{n\in\N}$ weakly converges to $x^{\infty,\tau}_1$\,.
Moreover, using that 
$$
\F^\infty(x^{\infty,\tau}_1)+\frac{1}{2\tau}|x^{\infty,\tau}_1-x^\infty_0|^2_\h=\liminf_{n\to +\infty}\F^n(x^{n,\tau}_1)+\frac{1}{2\tau}\liminf_{n\to +\infty}|x^{n,\tau}_1-x_0^n|_\h^2\,,
$$
since
$$
\F^\infty(x^{\infty,\tau}_1)\le\liminf_{n\to +\infty}\F^n(x^{n,\tau}_1)\quad\textrm{ and }\quad \frac{1}{2\tau}|x^{\infty,\tau}_1-x^\infty_0|^2_\h\le \frac{1}{2\tau}\liminf_{n\to +\infty}|x^{n,\tau}_1-x_0^n|_\h^2\,,
$$
we deduce that 
$$
\F^\infty(x^{\infty,\tau}_1)=\lim_{n\to +\infty}\F^n(x^{n,\tau}_1)\quad\textrm{ and }\quad|x^{\infty,\tau}_1-x^\infty_0|_\h=\lim_{n\to +\infty}|x^{n,\tau}_1-x^n_0|_\h\,,
$$
which 
implies \eqref{convdisc}  (for $k=1$ and then for all $k\in \N$). 

Assume now that (b) holds true.
 As above  (recall $\lambda< \frac{1}{2\tau}$)  we have that
$$
\F^n(x^{n,\tau}_1)+\lambda|x^{n,\tau}_1-x^{n}_0|^2_\h\le\mathcal{I}^{n,\tau}_1(x^{n,\tau}_1)\le S\,,
$$
whence, by the strong compactness property of the functions $\F^n(\cdot)+\frac{\lambda}{2}|\cdot|^2_\h$ we deduce that, up to a subsequence, $x^{n,\tau}_1\overset{\h}{\to} y_1$ for some  $y_1\in\h$\,. 
Moreover, since $|x_0^n- x_0^\infty|_\h\to 0$ as $n\to +\infty$\,, we have that the  functionals $\mathcal{I}^{n,\tau}_1$  $\Gamma$-converge with respect to the strong-$\h$ convergence to the  functional  $\mathcal{I}^{\infty,\tau}_1$\,.
By the fundamental theorem of $\Gamma$-convergence and by the uniqueness of the minimizer of the problem \eqref{movmin} with $\F=\F^\infty$ and $k=1$\,, we get that $y_1=x^{\infty,\tau}_1$\,, that the whole sequence $\{x^{n,\tau}_1\}_{n\in\N}$ strongly converges to $x^{\infty,\tau}_1$\,, and that \eqref{convdisc} is satisfied for $k=1$\,. This concludes the proof of \eqref{convdisc} for both the cases (a) and (b).

\vskip5pt
Now we show that for every $T>0$\,, $x^n\weakly x^\infty$ in $H^1([0,T];\h)$\,, where $x^\infty$ is the unique solution to \eqref{cauninf}\,.
To this end, we first notice that, for $n$ large enough, 
$|\F^n(x_0^n)|\le S + 2 \lambda |x_0|^2$, so that $|\F^n(x_0^n)| + \lambda |x_0^n|^2 \le S + 4 \lambda |x_0|^2$; therefore, 
 by \eqref{dopo0}
$$
\|\dot{x}^n\|^2_{L^2((0,T);\h)}\le  4^{8\lambda T+4}(S+4\lambda|x_0|_\h^2)\,,
$$
so that, up to a subsequence, $x^n\weakly \bar x$ in  $H^1([0,T];\h)$\,, for some $\bar x\in H^1([0,T];\h)$\,.
Now we show that $\bar x=x^\infty$\,. 

For every $0<\tau<\frac{1}{2\lambda}$\,, let $x^{\infty,\tau}$ and $x^{n,\tau}$ ($n\in\N$) denote the piecewise affine interpolations defined in \eqref{funzdeftratti}, of $\{x^{\infty,\tau}_k\}_{k\in\N}$ and $\{x^{n,\tau}_k\}_{k\in\N}$, respectively.
By \eqref{convdisc}, we have that
\begin{equation}\label{convdisc2}
\lim_{n\to +\infty}|x^{n,\tau}(t)-x^{\infty,\tau}(t)|_\h=0\qquad\textrm{for every }t>0\,,0<\tau<\frac{1}{2\lambda}\,.
\end{equation}
Let $t>0$\,. For every $0<\tau<\frac{1}{2\lambda}$\,, by triangle inequality and by \eqref{dopo} we have that
\begin{equation}\label{trine}
\begin{aligned}
|x^{n}(t)-x^{\infty}(t)|_\h\le& |x^{n}(t)-x^{n,\tau}(t)|_\h+|x^{n,\tau}(t)-x^{\infty,\tau}(t)|_\h+|x^{\infty,\tau}(t)-x^{\infty}(t)|_\h\\
\le&
 16  \tau 12^{8\lambda t +4 }\big(S+4\lambda|x_0|_\h^2\big)
+|x^{n,\tau}(t)-x^{\infty,\tau}(t)|_\h\,;
\end{aligned}
\end{equation}
therefore, sending first $n\to +\infty$ and then $\tau\to 0$ in \eqref{trine} and using \eqref{convdisc2}, we get that $x^n(t)\overset{\h}{\to} x^{\infty}(t)$ as $n\to +\infty$\,.
By the uniqueness of the limit we deduce that $\bar x=x^{\infty}$ and that the whole sequence $\{x^n\}_{n\in\N}$ weakly  converges  in $H^1([0,T];\h)$ to $x^\infty$\,.
\vskip5pt
Finally, we prove that \eqref{strongc} and \eqref{alwaysrs} hold true. By \eqref{trine}, the first part of \eqref{alwaysrs} is satisfied. Moreover, by \cite[formula (1.10)]{RS}  (notice that, as observed in \cite{RS}, the formula applies also for $\lambda$-convex energies),  we have that, for every $t>0$\,,
\begin{equation}\label{enid} 
\begin{aligned}
&\F^n(x^n_0(t))-\F^n(x^n(t))=\frac 1 2\int_{0}^{t}|\dot{x}^n(s)|^2_\h\ud s\qquad\textrm{for every }n\in\N\,,\\
&\F^\infty(x^\infty_0(t))-\F^\infty(x^\infty(t))=\frac 1 2\int_{0}^{t}|\dot{x}^\infty(s)|^2_\h\ud s\,,
\end{aligned}
\end{equation}
which, using \eqref{reco0}, the $\Gamma$-liminf inequality (that holds true in both the cases (a) and (b)) and the weak $H^1$-convergence of $x^n$ to $x^\infty$, implies
\begin{equation*}
%\label{wbae}
\begin{aligned}
&\F^\infty(x^\infty_0(t))-\liminf_{n\to +\infty}\F^n(x^n(t))\le
\F^\infty(x^\infty_0(t))-\F^\infty(x^\infty(t))
=\frac 1 2\int_{0}^{t}|\dot{x}^\infty(s)|^2_\h\ud s\\
\le& \liminf_{n\to +\infty}\frac 1 2\int_{0}^{t}|\dot{x}^n(s)|^2_\h\ud s
\le  \limsup_{n\to +\infty}\frac 1 2\int_{0}^{t}|\dot{x}^n(s)|^2_\h\ud s\\
\le&\limsup_{n\to +\infty} \F^n(x^n_0(t))-\liminf_{n\to +\infty}\F^n(x^n(t))
=\F^\infty(x^\infty_0(t))-\liminf_{n\to +\infty}\F^n(x^n(t))\,.
\end{aligned}
\end{equation*}
Therefore, all the inequalities above are actually equalities;
in particular,
\begin{equation}\label{convnorme}
\frac 1 2\int_{0}^{t}|\dot{x}^\infty(s)|^2_\h\ud s=\lim_{n\to +\infty}\frac 1 2\int_{0}^{t}|\dot{x}^n(s)|^2_\h\ud s\,,
\end{equation}
which, together \eqref{reco0} and \eqref{enid}, yields
$$
\F^\infty(x^\infty(t))=\lim_{n\to +\infty}\F^n(x^n(t))\,,
$$
thus obtaining also the second part of \eqref{alwaysrs}. 
Finally, by \eqref{convnorme}, we obtain also \eqref{strongc}, thus concluding the proof of the theorem.
\end{proof}
%%%%%%%%%%%%%%%%%%%%%%%%%%%%%%%%%%%%%%%%
%%%%%%%%%%%%%%%%%%%%%%%%%%%%%%%%%%%%%%%%
%%%%%%%%%%%%%%%%%%%%%%%%%%%%%%%%%%%%%%%% 

\section{Convergence of the $s$-fractional heat flows} \label{Sec4}
%as $s\to 0^+$} \label{Sec4}
%%%%%%%%%%%%%%%%%%%%%%%%%%%%%%%%%%%%%%%%
%%%%%%%%%%%%%%%%%%%%%%%%%%%%%%%%%%%%%%%%
%%%%%%%%%%%%%%%%%%%%%%%%%%%%%%%%%%%%%%%%
 This section is devoted to the proof of the stability of the $s$-fractional heat flows as $s\to 0^+$ and $s\to 1^-$.
In the first part, we define the $s$-fractional laplacian for $s\in (0,1)$ and for $s=0$\,. The second part contains the convergence theorems, which are the main results of the paper. 
 In this section we denote by $v_t$ the partial time derivative of a function $v$\,.
%%%%%%%%%%%%%%%%%%%%%%%%%%%%%%%%%%%%%%%%
%%%%%%%%%%%%%%%%%%%%%%%%%%%%%%%%%%%%%%%%
%%%%%%%%%%%%%%%%%%%%%%%%%%%%%%%%%%%%%%%%    
\subsection{The $s$-fractional laplacian for $s\in (0,1)$ and for $s=0$}
%%%%%%%%%%%%%%%%%%%%%%%%%%%%%%%%%%%%%%%%
%%%%%%%%%%%%%%%%%%%%%%%%%%%%%%%%%%%%%%%%
%%%%%%%%%%%%%%%%%%%%%%%%%%%%%%%%%%%%%%%%  
For every $s\in(0,1)$ and for every $\psi\in \CC_\cc^\infty(\R^d)$  the $s$-fractional laplacian of $\psi$ is defined by
\begin{equation*}%\label{slaplacian}
(-\Delta)^s\psi(x):=\int_{\R^d}\frac{2\psi(x)-\psi(x+z)-\psi(x-z)}{|z|^{d+2s}}\ud z,\qquad x\in\R^d\,.
\end{equation*}
In \cite[Lemma 3.2]{DPV} it is proven that the above integral is finite, that $(-\Delta)^s\psi\in L^\infty(\R^d)$\,, and that 
\begin{equation}\label{PV}
(-\Delta)^s\psi(x)=2\lim_{r\to 0^+}\int_{\R^d\setminus B_r(0)}\frac{\psi(x)-\psi(x+z)}{|z|^{d+2s}}\ud z\,.
\end{equation}
For every $u\in\mathcal{H}^s_0(\Omega)$ we define the $s$-fractional laplacian of $u$ by duality as
%Therefore, for every $s\in (0,1)$ we can extend the  fractional laplacian to the whole $L^2(\R^d)$, by setting for every $u\in L^2(\R^d)$
\begin{equation}\label{extlap}
\langle (-\Delta)^su,\ffi\rangle:=\langle u,(-\Delta)^s\tilde\ffi\rangle,\qquad\textrm{for all }\ffi\in \CC_{\cc}^{\infty}(\Omega). 
\end{equation}
Here and below $\langle\cdot,\cdot\rangle$ denotes the standard scalar product in $L^2$.
%%%%%%%%%%%%%%%%%%%%%%%%%%%%%%
%%%%%%%%%%%%%%%%%%%%%%%%%%%%%%
%%%%%%%%%%%%%%%%%%%%%%%%%%%%%%
Clearly, the $s$-fractional laplacian is nothing but the first variation of the squared Gagliardo $s$-norm, as shown below.
%%%%%%%%%%%%%%%%%%%%%%%%%%%%%%
%%%%%%%%%%%%%%%%%%%%%%%%%%%%%%
%%%%%%%%%%%%%%%%%%%%%%%%%%%%%%
\begin{proposition}\label{lemma:firstvar}
Let $s\in (0,1)$\,. For every $u\in \mathcal{H}^s_0(\Omega)$ and for every $\ffi\in \CC_\cc^\infty(\Omega)$ we have
\begin{equation}\label{eq:firstvar}
\lim_{t\to 0}\frac{F^s(u+t\ffi)-F^s(u)}{t}=\langle(-\Delta)^su,\ffi\rangle.
\end{equation}
\end{proposition}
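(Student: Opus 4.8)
The plan is to differentiate $F^s$ along the direction $\ffi$ by expanding the squared Gagliardo seminorm, and then to recognize the resulting symmetric bilinear form as the duality pairing $\langle(-\Delta)^s u,\ffi\rangle$ of \eqref{extlap}.

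\emph{First step: the incremental quotient.} Since $\ffi\in\CC^\infty_\cc(\Omega)$ we have $\tilde\ffi\in\CC^\infty_\cc(\R^d)$, so $[\tilde\ffi]_s<+\infty$, while $u\in\mathcal H^s_0(\Omega)$ gives $[\tilde u]_s<+\infty$. Expanding pointwise in $(x,y)$
\[
|(\tilde u+t\tilde\ffi)(x)-(\tilde u+t\tilde\ffi)(y)|^2=|\tilde u(x)-\tilde u(y)|^2+2t\,(\tilde u(x)-\tilde u(y))(\tilde\ffi(x)-\tilde\ffi(y))+t^2|\tilde\ffi(x)-\tilde\ffi(y)|^2,
\]
dividing by $|x-y|^{d+2s}$ and integrating over $\R^{2d}$, the mixed term is absolutely convergent because, by Cauchy--Schwarz for the measure $|x-y|^{-(d+2s)}\ud y\ud x$,
\[
\Big|\iint_{\R^{2d}}\frac{(\tilde u(x)-\tilde u(y))(\tilde\ffi(x)-\tilde\ffi(y))}{|x-y|^{d+2s}}\ud y\ud x\Big|\le[\tilde u]_s\,[\tilde\ffi]_s<+\infty.
\]
Denoting by $a^s(u,\ffi)$ the integral inside the modulus, one gets $F^s(u+t\ffi)-F^s(u)=t\,a^s(u,\ffi)+O(t^2)$ as $t\to0$, hence $\lim_{t\to0}t^{-1}\big(F^s(u+t\ffi)-F^s(u)\big)=a^s(u,\ffi)$.

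\emph{Second step: identification with $(-\Delta)^s$.} By \eqref{extlap}, $\langle(-\Delta)^su,\ffi\rangle=\int_{\R^d}\tilde u(x)\,(-\Delta)^s\tilde\ffi(x)\,\ud x$. I would invoke \cite[Lemma 3.2]{DPV}, which provides $(-\Delta)^s\tilde\ffi\in L^\infty(\R^d)$ together with the representation $(-\Delta)^s\tilde\ffi(x)=\int_{\R^d}(2\tilde\ffi(x)-\tilde\ffi(x+z)-\tilde\ffi(x-z))\,|z|^{-(d+2s)}\ud z$, with integrand bounded by $C\min\{1,|z|^2\}\,|z|^{-(d+2s)}$ uniformly in $x$. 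As $\tilde u\in L^2(\R^d)$ is compactly supported (in particular $\tilde u\in L^1$), Fubini's theorem applies; then, for each fixed $z$, the change of variables $x\mapsto x-z$ in the last summand and translation invariance of Lebesgue measure give the algebraic identity
\[
\int_{\R^d}\tilde u(x)\big(2\tilde\ffi(x)-\tilde\ffi(x+z)-\tilde\ffi(x-z)\big)\ud x=\int_{\R^d}\big(\tilde u(x)-\tilde u(x+z)\big)\big(\tilde\ffi(x)-\tilde\ffi(x+z)\big)\ud x.
\]
Substituting this back and using Fubini once more --- now legitimate because $\iint_{\R^{2d}}|\tilde u(x)-\tilde u(x+z)|\,|\tilde\ffi(x)-\tilde\ffi(x+z)|\,|z|^{-(d+2s)}\ud x\ud z\le[\tilde u]_s[\tilde\ffi]_s<+\infty$ by Cauchy--Schwarz --- the change of variables $y=x+z$ yields $\langle(-\Delta)^su,\ffi\rangle=a^s(u,\ffi)$. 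Together with the first step this is exactly \eqref{eq:firstvar}.

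\emph{Main obstacle.} The delicate point throughout is the non-integrable singularity at $y=x$ (equivalently $z=0$): neither $a^s(u,\ffi)$ nor the integral defining $(-\Delta)^s\tilde\ffi$ may be split into separately convergent pieces, so termwise manipulation is forbidden. The plan circumvents this by never leaving the symmetric second difference $2\tilde\ffi(x)-\tilde\ffi(x+z)-\tilde\ffi(x-z)$, which vanishes like $|z|^2$ at the origin, and by applying Fubini only after verifying absolute integrability against the singular measure via Cauchy--Schwarz. I would also note that no density argument for $u$ is needed: the regularity used in the second step is entirely carried by the test function $\ffi$, and the hypothesis $u\in\mathcal H^s_0(\Omega)$ enters only through $[\tilde u]_s<+\infty$.
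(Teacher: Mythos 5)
Your proposal is correct and follows essentially the same route as the paper: expand the square to obtain the symmetric bilinear form $a^s(u,\ffi)$ (with the $O(t^2)$ remainder controlled by $[\tilde\ffi]_s^2$), then identify it with $\langle u,(-\Delta)^s\tilde\ffi\rangle=\langle(-\Delta)^su,\ffi\rangle$ via \eqref{extlap}. The only difference is in the identification step, where the paper splits the integrand and invokes the principal-value formula \eqref{PV}, whereas you stay with the second-difference representation of $(-\Delta)^s\tilde\ffi$ and justify the exchange of integrals by explicit Fubini/Cauchy--Schwarz bounds -- a slightly more self-contained justification of the same identity.
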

%%%%%%%%%%%%%%%%%%%%%%%%%%%%%%
%%%%%%%%%%%%%%%%%%%%%%%%%%%%%%
%%%%%%%%%%%%%%%%%%%%%%%%%%%%%%
\begin{proof}
We have
\begin{eqnarray*}
\lim_{t\to 0}\frac{F^s(u+t\ffi)-F^s(u)}{t}&=&\int_{\R^d}\int_{\R^d}\frac{(\tilde u(x)-\tilde u(y))(\tilde\ffi(x)-\tilde\ffi(y))}{|x-y|^{d+2s}}\ud y\ud x\\
&=&\int_{\Omega} u(x)\lim_{r\to 0^+}\int_{\R^d\setminus B_r(0)}\frac{\tilde\ffi(x)-\tilde\ffi(x+z)}{|z|^{d+2s}}\ud z\ud x\\
&&+\int_{\Omega} u(y)\lim_{r\to 0^+}\int_{\R^d\setminus B_r(0)}\frac{\tilde\ffi(y)-\tilde\ffi(y-z)}{|z|^{d+2s}}\ud z\ud y\\
&=&\langle u,(-\Delta)^s\tilde\ffi\rangle=\langle (-\Delta)^su,\ffi\rangle,
\end{eqnarray*}
where we have used the change of variable $z=y-x$\,, \eqref{PV} and \eqref{extlap}.
\end{proof}
%%%%%%%%%%%%%%%%%%%%%%%%%%%%%%%%%%%%%%%%
%%%%%%%%%%%%%%%%%%%%%%%%%%%%%%%%%%%%%%%%
%%%%%%%%%%%%%%%%%%%%%%%%%%%%%%%%%%%%%%%%        
For every $\psi\in\CC^{\infty}_{\cc}(\R^d)$ we define the $0$-fractional laplacian of $\psi$ as
\begin{equation*}%\label{0laplacian}
(-\Delta)^0\psi(x):=\int_{B_1}\frac{2\psi(x)-\psi(x+z)-\psi(x-z)}{|z|^d}\ud z-2\int_{\R^d\setminus \overline{B}_1}\frac{\psi(x+z)}{|z|^d}\ud z\,,\quad x\in\R^d\,.
\end{equation*}
We notice that $(-\Delta)^0\psi$ is well-defined for every $\psi\in\CC^{\infty}_{\cc}(\R^d)$ since
\begin{equation*}
\int_{B_1}\frac{|2\psi(x)-\psi(x+z)-\psi(x-z)|}{|z|^d}\ud z\le 2\int_{B_1}\frac{|\psi(x+z)-\psi(x)|}{|z|^d}\ud z\\
\le C [\psi]_{0,1}\,.
\end{equation*} 
and 
\begin{equation*}
\begin{aligned}
\int_{\R^d\setminus \overline{B}_1}\frac{|\psi(x+z)|}{|z|^d}\ud z\le \|\psi\|_{L^1}\,.
\end{aligned}
\end{equation*}
%%%%%%%%%%%%%%%%%%%%%%%%%%%%%%%%%%%%%%%%
%%%%%%%%%%%%%%%%%%%%%%%%%%%%%%%%%%%%%%%%
%%%%%%%%%%%%%%%%%%%%%%%%%%%%%%%%%%%%%%%%        
\begin{remark}\label{ChenWeth}
\rm
In \cite{CW} the notion of {\it logarithmic laplacian} $L_{\Delta}$ has been introduced as follows
$$
\mathrm{L}_\Delta\psi(x):=c_{d,1}(-\Delta)^0\psi(x)+c_{d,2}\psi(x)\,,
$$
where $c_{d,1}$ and $c_{d,2}$ are specific constant depending only on the dimension $d$\,.
Such a logarithmic laplacian would correspond to renormalizing the Gagliardo $s$-seminorm of $\psi$ by removing all but a finite amount of the blowing up quantity $\frac{\|\psi\|^2_{L^2}}{s}$\,.
\end{remark}
For every $u\in\mathcal{H}^0_0(\Omega)$
we define $0$-fractional laplacian of $u$ by duality as 
\begin{equation}\label{ext0lap}
\langle (-\Delta)^0u,\ffi\rangle :=\langle u,(-\Delta)^0\tilde\ffi\rangle\,,\qquad\textrm{for all }\ffi\in\CC^{\infty}_{\cc}(\Omega)\,.
\end{equation}
Clearly, the $0$-fractional laplacian is the first variation of the functional $\hat F^0$\,, as shown in the following result (we recall that the functions $G^0_1$ and $J^0_1$ have been introduced in \eqref{HsR} and \eqref{Jsr}, respectively).
%%%%%%%%%%%%%%%%%%%%%%%%%%%%%%%%%%%%%%%%
%%%%%%%%%%%%%%%%%%%%%%%%%%%%%%%%%%%%%%%%
%%%%%%%%%%%%%%%%%%%%%%%%%%%%%%%%%%%%%%%%
\begin{proposition}\label{lemma:firstvar0}
For every $u\in \mathcal{H}^0_0(\Omega)$ and for every $\ffi\in \CC^{\infty}_{\cc}(\Omega)$ we have
\begin{eqnarray}\label{eq1:firstvar0}
\lim_{t\to 0}\frac{G^0_1(u+t\ffi)-G^0_1(u)}{t}&=&\Big\langle u,\int_{B_1}\frac{2\tilde\ffi(x)-\tilde\ffi(x+z)-\tilde\ffi(x-z)}{|z|^d}\ud z\Big\rangle\\ \label{eq2:firstvar0}
\lim_{t\to 0}\frac{J^0_1(u+t\ffi)-J^0_1(u)}{t}&=&\Big\langle u, -2\int_{\R^d\setminus \overline{B}_1}\frac{\tilde\ffi(x+z)}{|z|^d}\ud z\Big\rangle\,,
\end{eqnarray}
so that
\begin{equation}\label{eq3:firstvar0}
\lim_{t\to 0}\frac{\hat F^0(u+t\ffi)-\hat F^0(u)}{t}=\langle (-\Delta)^0u,\ffi\rangle\,.
\end{equation}
\end{proposition}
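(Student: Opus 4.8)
The plan is to prove \eqref{eq1:firstvar0} and \eqref{eq2:firstvar0} separately, add them, recognise the sum of the two integrands as $(-\Delta)^0\tilde\ffi$, and then conclude \eqref{eq3:firstvar0} from $\hat F^0=G^0_1+J^0_1$ and the duality definition \eqref{ext0lap}. Two elementary facts will make every integral below absolutely convergent and license the manipulations: first, since $\ffi\in\CC^\infty_\cc(\Omega)$, its zero extension $\tilde\ffi$ lies in $\CC^\infty_\cc(\R^d)$, so the second-order Taylor bound $|2\tilde\ffi(x)-\tilde\ffi(x+z)-\tilde\ffi(x-z)|\le\|\D^2\tilde\ffi\|_{L^\infty}|z|^2$ holds and, for $|z|<1$, the map $x\mapsto 2\tilde\ffi(x)-\tilde\ffi(x+z)-\tilde\ffi(x-z)$ is supported in a fixed bounded set; second, $\Omega$ is bounded, so $\tilde u\in L^1(\R^d)\cap L^2(\R^d)$.

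For \eqref{eq1:firstvar0}, I would expand, using the symmetry of $\B_1$ and of the kernel,
\[
G^0_1(u+t\ffi)-G^0_1(u)=t\,\mathcal{B}(u,\ffi)+t^2 G^0_1(\ffi),\qquad \mathcal{B}(u,\ffi):=\iint_{\B_1}\frac{(\tilde u(x)-\tilde u(y))(\tilde\ffi(x)-\tilde\ffi(y))}{|x-y|^d}\ud y\ud x.
\]
Since $u\in\mathcal{H}^0_0(\Omega)$ and $\ffi\in\CC^\infty_\cc(\Omega)$ one has $G^0_1(u),G^0_1(\ffi)<+\infty$, so $\mathcal{B}(u,\ffi)$ is finite by Cauchy--Schwarz and, dividing by $t$ and letting $t\to0$, the limit equals $\mathcal{B}(u,\ffi)$. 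I then perform the change of variables $z=y-x$ in this (absolutely convergent) double integral, which turns $\B_1$ into $\R^d\times B_1$, and for fixed $z$ compute the inner $x$-integral by translation invariance of the Lebesgue measure, obtaining
\[
\mathcal{B}(u,\ffi)=\int_{B_1}\frac{1}{|z|^d}\Big(\int_{\R^d}\tilde u(x)\big(2\tilde\ffi(x)-\tilde\ffi(x+z)-\tilde\ffi(x-z)\big)\ud x\Big)\ud z;
\]
a final interchange of the order of integration by Fubini (licit by the Taylor bound together with $\tilde u\in L^1$) produces the right-hand side of \eqref{eq1:firstvar0}.

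For \eqref{eq2:firstvar0}, expanding $J^0_1(u+t\ffi)-J^0_1(u)$ isolates the term linear in $t$, namely $-\iint_{\R^{2d}\setminus\overline\B_1}\frac{\tilde u(x)\tilde\ffi(y)+\tilde\ffi(x)\tilde u(y)}{|x-y|^d}\ud y\ud x$, the quadratic part being bounded by $t\|\ffi\|_{L^1(\Omega)}^2$ as in \eqref{estJsr} and hence negligible as $t\to0$; symmetrising in $(x,y)$ and setting $z=y-x$, which sends $\R^{2d}\setminus\overline\B_1$ to $\R^d\times(\R^d\setminus\overline{B_1})$ where the kernel is bounded by $1$, gives $-2\int_{\R^d}\tilde u(x)\int_{\R^d\setminus\overline{B_1}}\frac{\tilde\ffi(x+z)}{|z|^d}\ud z\ud x$, which is exactly the right-hand side of \eqref{eq2:firstvar0} (Fubini being justified here simply by $\tilde u,\tilde\ffi\in L^1(\R^d)$). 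Adding \eqref{eq1:firstvar0} and \eqref{eq2:firstvar0} and using $\hat F^0=G^0_1+J^0_1$, the bracketed function becomes $\int_{B_1}\frac{2\tilde\ffi(x)-\tilde\ffi(x+z)-\tilde\ffi(x-z)}{|z|^d}\ud z-2\int_{\R^d\setminus\overline{B_1}}\frac{\tilde\ffi(x+z)}{|z|^d}\ud z=(-\Delta)^0\tilde\ffi(x)$ by the very definition of the $0$-fractional laplacian, so the limit equals $\langle u,(-\Delta)^0\tilde\ffi\rangle=\langle(-\Delta)^0 u,\ffi\rangle$ by \eqref{ext0lap}, i.e.\ \eqref{eq3:firstvar0}. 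The only point needing care is the bookkeeping behind the two changes of variables and the two Fubini interchanges near the singularity $|z|^{-d}$; what makes it routine is that after the substitution $z=y-x$ the singular factor $|z|^{-d}$ decouples from $u$ and is paired with the second difference of the smooth function $\tilde\ffi$, which vanishes quadratically at $z=0$.
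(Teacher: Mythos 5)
Your proposal is correct and follows essentially the same route as the paper: expand the quadratic forms to isolate the term linear in $t$, pass to the limit, and then use the change of variables $z=y-x$ together with the symmetry of the kernels and Fubini to rewrite the bilinear terms as $\langle u,\cdot\rangle$ pairings, concluding via the duality definition \eqref{ext0lap}. The extra care you take in justifying absolute convergence (second-order Taylor bound on $2\tilde\ffi(x)-\tilde\ffi(x+z)-\tilde\ffi(x-z)$ near $z=0$, $\tilde u\in L^1\cap L^2$) is left implicit in the paper but is exactly the right bookkeeping.
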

%%%%%%%%%%%%%%%%%%%%%%%%%%%%%%%%%%%%%%%%
%%%%%%%%%%%%%%%%%%%%%%%%%%%%%%%%%%%%%%%%
%%%%%%%%%%%%%%%%%%%%%%%%%%%%%%%%%%%%%%%%
\begin{proof}
Fix $u\in \mathcal{H}^0_0(\Omega)$ and $\ffi\in\CC_{\cc}^{\infty}(\Omega)$\,.
Then, using the change of variable $z=y-x$\,, we have
\begin{equation*}
\begin{aligned}
&\lim_{t\to 0}\frac{G^0_1(u+t\ffi)-G^0_1(u)}{t}=\iint_{\B_1}\frac{(\tilde u(x)-\tilde u(y))(\tilde\ffi(x)-\tilde\ffi(y))}{|x-y|^d}\ud y\ud x\\
=&\int_{\Omega}u(x)\int_{B_1}\frac{\tilde\ffi(x)-\tilde\ffi(x+z)}{|z|^d}\ud z\ud x+\int_{\Omega}u(y)\int_{B_1}\frac{\tilde\ffi(y)-\tilde\ffi(y-z)}{|z|^d}\ud z\ud y\\
=&\Big\langle u,\int_{B_1}\frac{2\tilde\ffi(x)-\tilde\ffi(x+z)-\tilde\ffi(x-z)}{|z|^d}\ud z \Big\rangle\,, 
\end{aligned}
\end{equation*}
i.e., \eqref{eq1:firstvar0}.
Moreover, using again the change of variable $z=y-x$, we obtain
\begin{equation*}
\begin{aligned}
&\lim_{t\to 0}\frac{J^0_1(u+t\ffi)-J^0_1(u)}{t}=-2\iint_{\R^{2d}\setminus\overline{\B}_1}\frac{\tilde u(x)\tilde\ffi(y)}{|x-y|^d}\ud y\ud x\\
=&\int_{\Omega} u(x)\Big(-2\int_{\R^d\setminus \overline{B}_1(x)}\frac{\tilde\ffi(y)}{|x-y|^d}\ud y\Big)\ud x\,,\\
=&\Big\langle u, -2\int_{\R^d\setminus \overline{B}_1}\frac{\tilde\ffi(x+z)}{|z|^d}\ud z\Big\rangle\,,
\end{aligned}
\end{equation*}
namely, \eqref{eq2:firstvar0}.
Finally, \eqref{eq3:firstvar0} follows from \eqref{eq1:firstvar0} and \eqref{eq2:firstvar0}, using \eqref{ext0lap}.
\end{proof}
%%%%%%%%%%%%%%%%%%%%%%%%%%%%%%%%%%%%%%%%
%%%%%%%%%%%%%%%%%%%%%%%%%%%%%%%%%%%%%%%%
%%%%%%%%%%%%%%%%%%%%%%%%%%%%%%%%%%%%%%%% 
%%%%%%%%%%%%%%%%%%%%%%%%%%%%%%%%%%%%%%%%
%%%%%%%%%%%%%%%%%%%%%%%%%%%%%%%%%%%%%%%%
%%%%%%%%%%%%%%%%%%%%%%%%%%%%%%%%%%%%%%%%   
%%%%%%%%%%%%%%%%%%%%%%%%%%%%%%%%%%%%%%%%
%%%%%%%%%%%%%%%%%%%%%%%%%%%%%%%%%%%%%%%%
%%%%%%%%%%%%%%%%%%%%%%%%%%%%%%%%%%%%%%%%  
 %%%%%%%%%%%%%%%%%%%%%%%%%%%%%%%%%%%%%%%%
 %%%%%%%%%%%%%%%%%%%%%%%%%%%%%%%%%%%%%%%%
 %%%%%%%%%%%%%%%%%%%%%%%%%%%%%%%%%%%%%%%%  
\subsection{The main results}
 %%%%%%%%%%%%%%%%%%%%%%%%%%%%%%%%%%%%%%%%
 %%%%%%%%%%%%%%%%%%%%%%%%%%%%%%%%%%%%%%%%
 %%%%%%%%%%%%%%%%%%%%%%%%%%%%%%%%%%%%%%%%
Here we first state and then prove the  convergence results for the parabolic flows corresponding to the (either scaled or renormalized) $s$-Gagliardo seminorms as $s\to 0$ and $s\to 1$.  These follow by collecting the preparatory results of the previous sections, and Lemma~\ref{lambdapc} for the first order convergence as $s\to 0^+$. 
 %%%%%%%%%%%%%%%%%%%%%%%%%%%%%%%%%%%%%%%%
 %%%%%%%%%%%%%%%%%%%%%%%%%%%%%%%%%%%%%%%%
 %%%%%%%%%%%%%%%%%%%%%%%%%%%%%%%%%%%%%%%%
 
 We start with the convergences as $s\to 0^+$.
\begin{theorem}\label{convheat0ordsto0}
Let $\{s_n\}_{n\in\N}\subset (0,1)$ be such that $s_n\to 0^+$ as $n\to +\infty$\,. Let $u^0_0\in L^2(\Omega)$ and let $\{u_0^n\}_{n\in\N}\subset L^2(\Omega)$ be such that $u^n_0\in\mathcal{H}^{s_n}_0(\Omega)$\,, $S:=\sup_{n\in\N}s_nF^{s_n}(u^n_0)<+\infty$ and 
 $u^n_0\to u^0_0$ in $L^2(\Omega)$\,.
Then,
for every $n\in\N$ there exists a unique  solution $u^n\in H^1([0,+\infty); L^2(\Omega))$ to
\begin{equation}\label{cauchyord0n}
\begin{dcases}
 u_t(t) =-s_n(-\Delta)^{s_n}u(t) \qquad\textrm{for a.e.\ }t\in[0,+\infty)\\
u(0)=u^{n}_0\,,
\end{dcases}
\end{equation}
 and such a solution satisfies   $(-\Delta)^{s_n}u^n(t)\in L^2(\Omega)$ for  $a.e. \, t\ge 0$\,.
Moreover, for every $T>0$, $u^n\weakly u^0$ in $H^1([0,T];L^2(\Omega))$ as $n\to +\infty$\,, where $u^0\in H^1([0,T];L^2(\Omega))$ is the unique 
solution to
\begin{equation}\label{cauchyord0infty}
\begin{dcases}
 u_t(t) =-d\omega_d u(t)\qquad\textrm{for a.e.\ }t\in (0,T)\,,\\
u(0)=u^0_0\,.
\end{dcases}
\end{equation}
Furthermore, if
$$
\lim_{n\to +\infty} s_nF^{s_n}(u^n_0)=F^0(u^0_0)\,,
$$ 
then, $u^n\to u^0$ (strongly) in $H^1([0,T];L^2(\Omega))$ for  every  $T>0$\,, and
$$
\|u^n(t)-u^0(t)\|_{L^2(\Omega)}\to 0\quad\textrm{and}\quad s_nF^{s_n}(u^n(t))\to F^0(u^0(t))\qquad\textrm{for every }t\ge 0\,.
$$
\end{theorem}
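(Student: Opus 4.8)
The plan is to realize Theorem~\ref{convheat0ordsto0} as a direct application of the abstract machinery developed in Section~\ref{sec:general}, with the Hilbert space $\h = L^2(\Omega)$ and the energies $\F^n := s_n F^{s_n}$ (together with the limit $\F^\infty := F^0$). First I would check that each $\F^n$ satisfies the hypotheses of Theorem~\ref{exist}: the functionals $F^{s_n}$ are nonnegative, convex (being squared seminorms), and strongly lower semicontinuous on $L^2(\Omega)$ (e.g. by Fatou on the Gagliardo double integral applied to the zero extensions), hence $\F^n$ is $0$-convex, so in particular $\lambda$-convex and $\lambda$-positive for any fixed $\lambda>0$; the same holds for $F^0 = \tfrac{d\omega_d}{2}\|\cdot\|_{L^2}^2$. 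Thus Theorem~\ref{exist} applies and for each $n$ there is a unique $x^n \in H^1([0,+\infty);L^2(\Omega))$ solving $\dot x^n \in -\partial \F^n(x^n)$, $x^n(0) = u_0^n$.

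Next I would identify the differential inclusion with the PDE \eqref{cauchyord0n}. Here I would invoke Proposition~\ref{singleton} with $\hat\h = \mathcal{H}^{s_n}_0(\Omega)$ (dense in $L^2(\Omega)$) and the Gateaux derivative computation of Proposition~\ref{lemma:firstvar}, which gives $\lim_{t\to0} t^{-1}(F^{s_n}(u+t\ffi)-F^{s_n}(u)) = \langle (-\Delta)^{s_n} u, \ffi\rangle$ for $u\in\mathcal{H}^{s_n}_0(\Omega)$ and $\ffi\in C^\infty_c(\Omega)$. Along the flow $x^n(t)\in D(\F^n)=\mathcal{H}^{s_n}_0(\Omega)$ for all $t$ (since $\F^n(x^n(t))\le \F^n(u_0^n)<\infty$ by the monotonicity in \eqref{minimalitàthmmain} passed to the limit, or directly from \eqref{enid}), so Proposition~\ref{singleton} forces $\partial\F^n(x^n(t)) = \{s_n(-\Delta)^{s_n}x^n(t)\}$ for a.e. $t$, with $s_n(-\Delta)^{s_n}x^n(t)\in L^2(\Omega)$; this is exactly \eqref{cauchyord0n} and the claimed $L^2$-regularity of $(-\Delta)^{s_n}u^n(t)$. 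For the limit equation, $\F^\infty = F^0$ is differentiable everywhere on $L^2(\Omega)$ with $\partial F^0(u) = \{d\omega_d\, u\}$, giving \eqref{cauchyord0infty}.

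Then I would apply Theorem~\ref{genstab} with option (a): by Theorem~\ref{gammafs}(i) the functionals $\F^n = s_n F^{s_n}$ are equicoercive with respect to weak $L^2$-convergence (an energy bound plus $\|u^n\|_{L^2}$ bound follows from \eqref{triv1}, and the $L^2$-bound is automatic once one also controls $\|u_0^n\|_{L^2}$, which is finite by assumption); by Theorem~\ref{gammafs}(ii) the $\Gamma$-$\liminf$ inequality holds for weak $L^2$-convergence with $\Gamma$-limit $F^0$; and by Theorem~\ref{gammafs}(iii) the $\Gamma$-$\limsup$ (recovery-sequence) inequality holds with \emph{strong} $L^2$-convergence, which is precisely the extra hypothesis in (a). The hypotheses on the initial data, $u_0^n\in D(\F^n)$, $S=\sup_n \F^n(u_0^n)<\infty$, and $u_0^n\to u_0^0$ strongly, are exactly those stated. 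Theorem~\ref{genstab} then yields $u_0^0\in D(F^0)=L^2(\Omega)$, the weak $H^1([0,T];L^2(\Omega))$-convergence $u^n\weakly u^0$ to the unique solution of \eqref{cauchyord0infty}, and — under the well-preparedness condition $\lim_n s_n F^{s_n}(u_0^n) = F^0(u_0^0)$, which matches \eqref{reco0} — the strong $H^1$-convergence \eqref{strongc} and the pointwise-in-time convergences \eqref{alwaysrs}, namely $\|u^n(t)-u^0(t)\|_{L^2}\to 0$ and $s_n F^{s_n}(u^n(t))\to F^0(u^0(t))$ for every $t\ge0$. This completes the proof modulo the identifications above.

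The main obstacle I anticipate is not any of the $\Gamma$-convergence estimates (those are all already packaged in Theorem~\ref{gammafs}) but the careful verification that the abstract differential inclusion $\dot x^n\in -\partial\F^n(x^n)$ genuinely coincides with the strong-form equation \eqref{cauchyord0n}: one must confirm that $x^n(t)$ stays in the fractional Sobolev space $\mathcal{H}^{s_n}_0(\Omega)$ for a.e. $t$ so that Proposition~\ref{lemma:firstvar} and Proposition~\ref{singleton} are applicable, and that the resulting single-valued subdifferential is the $L^2$-function $s_n(-\Delta)^{s_n}x^n(t)$ rather than merely an element of a larger dual space. This requires combining the energy-dissipation identity \eqref{enid} (or the discrete monotonicity) with the density of $C^\infty_c(\Omega)$ and the duality definition \eqref{extlap}; it is routine but is the one place where the nonlocal structure must be handled with care.
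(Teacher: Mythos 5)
Your proposal is correct and follows essentially the same route as the paper: verify that $\F^n=s_nF^{s_n}$ and $\F^\infty=F^0$ satisfy the hypotheses of Theorem~\ref{exist}, identify the subdifferential via Proposition~\ref{lemma:firstvar} and Proposition~\ref{singleton} so that the inclusion becomes \eqref{cauchyord0n} (resp.\ $\partial F^0(u)=\{d\omega_d u\}$ for \eqref{cauchyord0infty}), and conclude with Theorem~\ref{genstab} under assumption (a), which Theorem~\ref{gammafs} provides. The only cosmetic difference is that the paper applies Proposition~\ref{singleton} with $\hat\h=\CC^\infty_\cc(\Omega)$ (the space for which Proposition~\ref{lemma:firstvar} is stated) rather than $\mathcal{H}^{s_n}_0(\Omega)$, which is immaterial since both are dense in $L^2(\Omega)$ and your quoted derivative formula is indeed for test functions in $\CC^\infty_\cc(\Omega)$.
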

%%%%%%%%%%%%%%%%%%%%%%%%%%%%%%%%%%%%%%%%
%%%%%%%%%%%%%%%%%%%%%%%%%%%%%%%%%%%%%%%%
%%%%%%%%%%%%%%%%%%%%%%%%%%%%%%%%%%%%%%%%
\begin{theorem}\label{convheat1ordsto0}
Let $\{s_n\}_{n\in\N}\subset (0,1)$ be such that $s_n\to 0^+$ as $n\to +\infty$\,. Let $u^0_0\in L^2(\Omega)$ and let $\{u^{n}_0\}_{n\in\N}\subset L^2(\Omega)$ be such that $u^n_0\in\mathcal{H}^{s_n}_0(\Omega)$\,, $S:=\sup_{n\in\N}\hat{F}^{s_n}(u^n_0)<+\infty$
and
 $u^n_0\to u^0_0$ in $L^2(\Omega)$\,. Then, 
for every $n\in\N$ there exists a unique  solution $u^n\in H^1([0,+\infty); \mathcal{H}^{s_n}_0(\Omega))$ to
\begin{equation}\label{cauchyord1n}
\begin{dcases}
 u_t(t) =- \Big[(-\Delta)^{s_n}u(t)-\frac{d\omega_d}{s_n}u(t) \Big] \qquad\textrm{for a.e.\ }t\in (0,T)\,,\\
u(0)=u^{n}_0\,,
\end{dcases}
\end{equation}
 and such a solution satisfies  $(-\Delta)^{s_n}u^n(t)\in L^2(\Omega)$ for  $a.e.\, t\ge 0$\,.
Moreover, $u^0_0\in\mathcal{H}^0_0(\Omega)$ and, for every $T>0$,  $u^n\to u^0$ in $H^1([0,T];L^2(\Omega))$ as $n\to +\infty$\,, where $u^0\in H^1([0,T];\mathcal{H}^0_0(\Omega))$ is the unique solution to
\begin{equation}\label{cauchyord1infty}
\begin{dcases}
 u_t(t) =-(-\Delta)^0 u(t)\qquad\textrm{for a.e.\ }t\in (0,T)\\
u(0)=u^0_0\,,
\end{dcases}
\end{equation}
 and such a solution satisfies  $(-\Delta)^0 u^0(t)\in L^2(\Omega)$ for $a.e. \,t\ge 0$\,.
Furthermore, if
$$
\lim_{n\to +\infty} \hat{F}^{s_n}(u^n_0)=\hat F^0(u^0_0)\,,
$$
then,  $u^n\to u^0$ (strongly) in $H^1([0,T];L^2(\Omega))$ for  every  $T>0$\,, and
$$
\|u^n(t)-u^0(t)\|_{L^2(\Omega)}\to 0\quad\textrm{and}\quad \hat{F}^{s_n}(u^n(t))\to \hat{F}^0(u^0(t))\qquad\textrm{for every }t\ge 0\,.
$$
\end{theorem}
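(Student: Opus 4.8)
\textbf{Proof strategy for Theorem~\ref{convheat1ordsto0}.}
The plan is to read this statement off the abstract results of Section~\ref{sec:general}, applied to the Hilbert space $\h=L^2(\Omega)$ with $\langle\cdot,\cdot\rangle_\h=\langle\cdot,\cdot\rangle$, and to the functionals $\F^n:=\hat F^{s_n}$ and $\F^\infty:=\hat F^0$. First I would record the structural facts: by Lemma~\ref{lambdapc} (equivalently, since $G^s_1$ is a nonnegative quadratic form and $J^s_1$ is a quadratic form with $J^s_1(u)\ge-|\Omega|\|u\|^2_{L^2(\Omega)}$ by \eqref{estJsr}), the functionals $\hat F^s$, $s\in[0,1)$, are uniformly $\lambda$-convex and $\lambda$-positive, for a $\lambda$ depending only on $|\Omega|$; enlarging $\lambda$ if necessary (Remark~\ref{implicata}) we may assume $\tfrac\lambda2=2|\Omega|$, so that the coercivity constant matches the one in Theorem~\ref{thm:stozerofo}(i). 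They are proper ($\hat F^s(0)=0$) and strongly $L^2$-lower semicontinuous, since $G^s_1$ is lower semicontinuous by Fatou and $J^s_1$ is strongly $L^2$-continuous by Remark~\ref{DCT}. Finally $D(\hat F^{s_n})=\mathcal{H}^{s_n}_0(\Omega)$ and $D(\hat F^0)=\mathcal{H}^0_0(\Omega)$, because $F^0$ and $J^s_1$ are finite on all of $L^2(\Omega)$, while $[\tilde u]_{s_n}$ (resp.\ $G^0_1(u)$) is finite exactly on $\mathcal{H}^{s_n}_0(\Omega)$ (resp.\ $\mathcal{H}^0_0(\Omega)$).

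For the existence part I would apply Theorem~\ref{exist} to $\F^n=\hat F^{s_n}$ with datum $x_0=u^n_0\in D(\hat F^{s_n})$: this gives a unique $u^n\in H^1([0,+\infty);L^2(\Omega))$ with $\dot u^n(t)\in-\partial\hat F^{s_n}(u^n(t))$ for a.e.\ $t$ and $u^n(0)=u^n_0$. To turn this inclusion into \eqref{cauchyord1n}, I would use Proposition~\ref{singleton} with $\hat\h=\CC^\infty_\cc(\Omega)$: by Proposition~\ref{lemma:firstvar} applied to the splitting $\hat F^{s_n}=F^{s_n}-\tfrac1{s_n}F^0$ (whose first variation in direction $\ffi$ is $\langle(-\Delta)^{s_n}u-\tfrac{d\omega_d}{s_n}u,\ffi\rangle$), the hypothesis \eqref{grad} holds at every $u\in\mathcal{H}^{s_n}_0(\Omega)$, so $\partial\hat F^{s_n}(u)$ is either empty or the singleton $\{(-\Delta)^{s_n}u-\tfrac{d\omega_d}{s_n}u\}$. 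Along the flow the subdifferential is nonempty for a.e.\ $t$, and the energy-dissipation identity \eqref{enid} yields $\hat F^{s_n}(u^n(t))\le\hat F^{s_n}(u^n_0)<+\infty$ for all $t$, so $u^n(t)\in\mathcal{H}^{s_n}_0(\Omega)$ for all $t$; hence the singleton description applies, $u^n$ solves \eqref{cauchyord1n}, and $(-\Delta)^{s_n}u^n(t)=-\dot u^n(t)+\tfrac{d\omega_d}{s_n}u^n(t)\in L^2(\Omega)$ for a.e.\ $t$.

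For the stability part I would invoke Theorem~\ref{genstab}, case~(b). Its hypotheses are supplied by Theorem~\ref{thm:stozerofo}: parts~(ii)--(iii) give $\Gamma$-convergence of $\hat F^{s_n}$ to the proper functional $\hat F^0$ with respect to the strong $L^2$ topology, and part~(i) provides the required strong compactness for sequences with $\sup_n\big(\hat F^{s_n}(y^n)+2|\Omega|\|y^n\|^2_{L^2(\Omega)}\big)<+\infty$; the uniform $\lambda$-convexity and $\lambda$-positivity come from the first paragraph, and the data satisfy $u^n_0\in D(\hat F^{s_n})$, $S=\sup_n\hat F^{s_n}(u^n_0)<+\infty$, $u^n_0\to u^0_0$ in $L^2(\Omega)$. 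Theorem~\ref{genstab} then gives $u^0_0\in D(\hat F^0)=\mathcal{H}^0_0(\Omega)$ and the weak $H^1([0,T];L^2(\Omega))$-convergence $u^n\weakly u^0$, where $u^0$ solves $\dot u(t)\in-\partial\hat F^0(u(t))$, $u(0)=u^0_0$; arguing exactly as above but now with Proposition~\ref{lemma:firstvar0} in place of Proposition~\ref{lemma:firstvar} (and the bound $\hat F^0(u^0(t))\le\hat F^0(u^0_0)$ keeping $u^0(t)\in\mathcal{H}^0_0(\Omega)$), the subdifferential reduces to $\{(-\Delta)^0u^0(t)\}$ for a.e.\ $t$, so $u^0$ solves \eqref{cauchyord1infty} with $(-\Delta)^0u^0(t)=-\dot u^0(t)\in L^2(\Omega)$ a.e. Finally, under the extra assumption $\hat F^{s_n}(u^n_0)\to\hat F^0(u^0_0)$, which is precisely \eqref{reco0}, the conclusions \eqref{strongc}--\eqref{alwaysrs} of Theorem~\ref{genstab} give the strong $H^1([0,T];L^2(\Omega))$-convergence of $u^n$ to $u^0$ and the pointwise-in-time statements $\|u^n(t)-u^0(t)\|_{L^2(\Omega)}\to0$ and $\hat F^{s_n}(u^n(t))\to\hat F^0(u^0(t))$ for every $t\ge0$.

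The main obstacle — apart from bookkeeping — is the passage from the abstract differential inclusion to the genuine (nonlocal) PDE: one must show that the Fréchet subdifferential of the nonlocal energy is single-valued along the flow and identify it with the (extended) fractional, resp.\ logarithmic, Laplacian. This is exactly what Proposition~\ref{singleton} accomplishes, but it hinges on the flow remaining in the correct fractional Sobolev space (guaranteed by energy dissipation) so that the first-variation formulas of Propositions~\ref{lemma:firstvar} and \ref{lemma:firstvar0} can be invoked. A secondary, purely technical point is matching the coercivity constant $\tfrac\lambda2$ in Theorem~\ref{genstab}(b) with the constant $2|\Omega|$ appearing in the compactness statement \eqref{COMP2} of Theorem~\ref{thm:stozerofo}(i); this is resolved by enlarging $\lambda$, which preserves $\lambda$-convexity and $\lambda$-positivity.
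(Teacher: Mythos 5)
Your proposal is correct and follows essentially the same route as the paper: uniform $\lambda$-convexity/positivity from Lemma~\ref{lambdapc}, identification of the subdifferential via Propositions~\ref{lemma:firstvar}, \ref{lemma:firstvar0} and \ref{singleton}, existence and uniqueness from Theorem~\ref{exist}, and stability from Theorem~\ref{genstab}(b) with hypotheses supplied by Theorem~\ref{thm:stozerofo}. The extra details you supply (the energy-dissipation argument keeping the flow in $\mathcal{H}^{s_n}_0(\Omega)$, and the matching of the coercivity constant with $2|\Omega|$) are correct refinements of points the paper leaves implicit.
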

  %%%%%%%%%%%%%%%%%%%%%%%%%%%%%%%%%%%%%%%%
 %%%%%%%%%%%%%%%%%%%%%%%%%%%%%%%%%%%%%%%%
 %%%%%%%%%%%%%%%%%%%%%%%%%%%%%%%%%%%%%%%%
 The result below shows the convergence toward the classical heat equation as $s\to 1^-$ of the rescaled in time $s$-fractional heat equations. 
\begin{theorem}\label{11genstab}
Let $\{s_n\}_{n\in\N}\subset (0,1)$ be such that $s_n\to 1^-$ as $n\to +\infty$\,. Let $u^\infty_0\in L^2(\Omega)$ and let $\{u_0^n\}_{n\in\N}\subset L^2(\Omega)$ be such that $u^n_0\in\mathcal{H}^{s_n}_0(\Omega)$\,, $S:=\sup_{n\in\N}(1-s_n)F^{s_n}(u^n_0)<+\infty$ and 
 $u^n_0\to u^\infty_0$ in $L^2(\Omega)$\,.
Then,
for every $n\in\N$ there exists a unique  solution 
$u^n\in H^1([0,+\infty);  L^2(\Omega) )$ to
\begin{equation}\label{11caun}
\begin{dcases}
 u_t(t) =-(1-s_n)(-\Delta)^{s_n}u(t)  \qquad\textrm{for a.e.\ }t\in[0,+\infty) \\
u(0)=u^{n}_0\,,
\end{dcases}
\end{equation}
  and such a solution satisfies  $(-\Delta)^{s_n}u^n(t)\in L^2(\Omega)$ for  $a.e.\, t\ge 0$\,.
Moreover, $u^\infty_0\in H^1_0(\Omega)$\,, and, for every $T>0$, $u^n\weakly u^\infty$ in $H^1([0,T];L^2(\Omega))$ as $n\to +\infty$\,, where $u^\infty\in H^1([0,T];H^1_0(\Omega))$ is the unique  solution to
\begin{equation}\label{11cauninf}
\begin{dcases}
 u_t(t) =\frac{\omega_d}{2} \Delta u(t)  \qquad\textrm{for a.e.\ }t\in[0,+\infty)\\
u(0)=u^\infty_0\,.
\end{dcases}
\end{equation}
Furthermore, if
\begin{equation*}%\label{11reco0}
\lim_{n\to +\infty} (1-s_n)F^{s_n}(u^n_0)=F^1(u^\infty_0)\,,
\end{equation*} 
then, $u^n\to u^\infty$ (strongly) in $H^1([0,T];L^2(\Omega))$ for  every  $T>0$\,, and
$$
\|u^n(t)-u^\infty(t)\|_{L^2(\Omega)}\to 0\quad\textrm{and}\quad (1-s_n)F^{s_n}(u^n(t))\to F^1(u^\infty(t))\qquad\textrm{for every }t\ge 0\,.
$$
\end{theorem}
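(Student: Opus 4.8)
The plan is to recast \eqref{11caun} and \eqref{11cauninf} as gradient flows on the Hilbert space $\h := L^2(\Omega)$ of the functionals $\F^n := (1-s_n)F^{s_n}$ and $\F^\infty := F^1$ respectively, and then to combine the $\Gamma$-convergence result Theorem~\ref{thmGammaconvs1} with the abstract theory of Section~\ref{sec:general}, i.e.\ Theorems~\ref{exist} and \ref{genstab} and Proposition~\ref{singleton}.

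First I would record the structural properties of the energies. Each $F^{s_n}$ is proper (finite on $\CC^\infty_\cc(\Omega)$), nonnegative, convex, and strongly lower semicontinuous on $L^2(\Omega)$ (by Fatou's lemma applied to the Gagliardo integrand along an a.e.\ convergent subsequence); hence for \emph{any} fixed $\lambda>0$ the functionals $\F^n$ and $F^1$ are $\lambda$-convex and $\lambda$-positive. Fixing, say, $\lambda=2$, Theorem~\ref{exist} yields for each $n$ a unique $u^n\in H^1([0,+\infty);L^2(\Omega))$ solving $\dot u^n(t)\in-\partial\F^n(u^n(t))$ with $u^n(0)=u^n_0$ (note $u^n_0\in D(\F^n)$ since $u^n_0\in\mathcal H^{s_n}_0(\Omega)$), together with the quantitative estimates \eqref{dopo0}--\eqref{dopo}. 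To turn this inclusion into the PDE in \eqref{11caun} I would apply Proposition~\ref{singleton} with $\hat\h=\CC^\infty_\cc(\Omega)$: by Proposition~\ref{lemma:firstvar} (and linearity of the first variation in the scaling constant),
\[
\lim_{t\to0}\frac{\F^n(u+t\ffi)-\F^n(u)}{t}=(1-s_n)\langle(-\Delta)^{s_n}u,\ffi\rangle\qquad\text{for every }\ffi\in\CC^\infty_\cc(\Omega)\,,
\]
so $\partial\F^n(u)$ is either empty or the singleton $\{(1-s_n)(-\Delta)^{s_n}u\}$, the representative lying automatically in $L^2(\Omega)$ when the subdifferential is nonempty. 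Since along the flow $\partial\F^n(u^n(t))\ne\emptyset$ for a.e.\ $t$, this gives both the equation in \eqref{11caun} and the regularity $(-\Delta)^{s_n}u^n(t)\in L^2(\Omega)$ for a.e.\ $t\ge0$. The same argument, now based on the classical identity $\lim_{t\to0}t^{-1}(F^1(u+t\ffi)-F^1(u))=\tfrac{\omega_d}{2}\int_\Omega\nabla u\cdot\nabla\ffi=\langle-\tfrac{\omega_d}{2}\Delta u,\ffi\rangle$ valid for $u\in H^1_0(\Omega)$, identifies the limit inclusion with \eqref{11cauninf} and gives $\tfrac{\omega_d}{2}\Delta u^\infty(t)=u^\infty_t(t)\in L^2(\Omega)$ for a.e.\ $t$; moreover $u^\infty(t)\in H^1_0(\Omega)$ for every $t$ because $\F^\infty$ is non-increasing along the flow.

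Next I would verify the hypotheses of Theorem~\ref{genstab} in its form (b). By Theorem~\ref{thmGammaconvs1}(ii)--(iii) the $\F^n$ $\Gamma$-converge to $F^1$ with respect to strong $L^2(\Omega)$ convergence; by Theorem~\ref{thmGammaconvs1}(i) every sequence $\{y^n\}$ with $\sup_n\bigl(\F^n(y^n)+\tfrac{\lambda}{2}\|y^n\|^2_{L^2(\Omega)}\bigr)<+\infty$ is strongly precompact in $L^2(\Omega)$ (this is precisely the compactness assumption of (b), since with $\lambda=2$ one has $\tfrac{\lambda}{2}=1$). The remaining assumptions $S=\sup_n\F^n(u^n_0)<+\infty$ and $u^n_0\to u^\infty_0$ in $L^2(\Omega)$ are part of the statement. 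Theorem~\ref{genstab} then delivers at once $u^\infty_0\in D(F^1)=H^1_0(\Omega)$, the weak convergence $u^n\weakly u^\infty$ in $H^1([0,T];L^2(\Omega))$ for every $T>0$ to the unique solution of \eqref{11cauninf}, and --- under $\lim_n(1-s_n)F^{s_n}(u^n_0)=F^1(u^\infty_0)$ --- the strong convergence \eqref{strongc} together with \eqref{alwaysrs}, i.e.\ $\|u^n(t)-u^\infty(t)\|_{L^2(\Omega)}\to0$ and $(1-s_n)F^{s_n}(u^n(t))\to F^1(u^\infty(t))$ for every $t\ge0$.

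I do not expect a substantive obstacle, since the analytical core has already been carried out in Theorems~\ref{thmGammaconvs1}, \ref{exist} and \ref{genstab}. The two points deserving care are: (i) the identification of the abstract subdifferential with the fractional (resp.\ classical) Laplacian, where one must keep in mind that $(-\Delta)^{s_n}u$ is a priori only a linear form on $\CC^\infty_\cc(\Omega)$ and acquires an $L^2(\Omega)$ representative exactly when $\partial\F^n(u)\ne\emptyset$, a property available for a.e.\ $t$ along the flow; and (ii) the bookkeeping needed to match the energy bound in Theorem~\ref{thmGammaconvs1}(i) with the compactness hypothesis of Theorem~\ref{genstab}(b), which is harmless because the uniform constant $\lambda$ may be chosen freely --- the energies being genuinely convex and nonnegative.
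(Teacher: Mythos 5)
Your proposal is correct and follows essentially the same route as the paper: identify $(1-s_n)F^{s_n}$ and $F^1$ as $\lambda$-convex, $\lambda$-positive, strongly l.s.c.\ energies, use Proposition~\ref{lemma:firstvar} together with Proposition~\ref{singleton} (with $\hat\h=\CC^\infty_\cc(\Omega)$) to identify the subdifferentials with $(1-s_n)(-\Delta)^{s_n}$ and $-\tfrac{\omega_d}{2}\Delta$, invoke Theorem~\ref{exist} for existence/uniqueness and $L^2$-regularity of the right-hand sides, and conclude via Theorem~\ref{genstab} under assumption (b), verified by Theorem~\ref{thmGammaconvs1}. The only cosmetic difference is your explicit choice $\lambda=2$ to match the compactness bound of hypothesis (b) literally, which is harmless.
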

 
We first prove Theorem \ref{convheat0ordsto0}.
 %%%%%%%%%%%%%%%%%%%%%%%%%%%%%%%%%%%%%%%%
 %%%%%%%%%%%%%%%%%%%%%%%%%%%%%%%%%%%%%%%%
 %%%%%%%%%%%%%%%%%%%%%%%%%%%%%%%%%%%%%%%%
 \begin{proof}[Proof of Theorem \ref{convheat0ordsto0}]
 By the very definition of $F^{s}$ in \eqref{defFs}, we have that for every  $n\in\N$ 
 $D(s_nF^{s_n}) =\mathcal{H}^{s_n}_0(\Omega)\neq\emptyset$ and that the functionals $s_nF^{s_n}$ are strongly lower semicontinuous, $\lambda$-positive and $\lambda$-convex  for every $\lambda>0$\,. Moreover, by  combining  Proposition~\ref{lemma:firstvar} with Proposition~\ref{singleton}  for  $\F=s_nF^{s_n}$\,, $\h=L^2(\Omega)$, and $\hat{\h}=\CC^\infty_\cc(\Omega)$, we have that  for every $u\in\mathcal{H}^{s_n}_0(\Omega)$\,, either $\partial (s_nF^{s_n})(u)=\emptyset$ or $\partial (s_nF^{s_n})(u)=\{(-\Delta)^{s_n}u\}$ with $s_n(-\Delta)^{s_n}u\in L^2(\Omega)$\,. 
Therefore, by Theorem~\ref{exist}, there exists a unique solution to the Cauchy problem \eqref{cauchyord0n}, with $s_n(-\Delta)^{s_n}u\in L^2(\Omega)$ for $a.e. \, t\ge 0$, for every $n\in\N$.  Furthermore, for every $u\in L^2(\Omega)$ we have that
\begin{equation}\label{fvf0}
\lim_{t\to 0}\frac{F^0(u+t\ffi)-F^0(u)}{t}=d\omega_d\langle u,\ffi\rangle_{L^2(\Omega)}\qquad\textrm{for every }\ffi\in L^2(\Omega)\,,
\end{equation}
whence we deduce that $\partial F^0(u)=\{d\omega_d\, u\}$\,. As a consequence, there exists a unique solution to the problem \eqref{cauchyord0infty}.
 Finally, the stability claims follow by applying Theorem \ref{genstab} with $\F^n=s_nF^{s_n}$ and $\F^\infty=F^0$\,, once noticed that, in view of Theorem \ref{gammafs}, assumption (a) is satisfied.
 \end{proof}
 %%%%%%%%%%%%%%%%%%%%%%%%%%%%%%%%%%%%%%%%
 %%%%%%%%%%%%%%%%%%%%%%%%%%%%%%%%%%%%%%%%
 %%%%%%%%%%%%%%%%%%%%%%%%%%%%%%%%%%%%%%%%
  In order to prove Theorem~\ref{convheat1ordsto0}, we provide below a lemma showing uniform $\lambda$-convexity of the underlying functionals. 
 %%%%%%%%%%%%%%%%%%%%%%%%%%%%%%%%%%%%%%%%
 %%%%%%%%%%%%%%%%%%%%%%%%%%%%%%%%%%%%%%%%
 %%%%%%%%%%%%%%%%%%%%%%%%%%%%%%%%%%%%%%%% 
\begin{lemma}\label{lambdapc}
	For every $\lambda>2|\Omega|$\,, the functionals $\hat{F}^s$ are $\lambda$-positive and $\lambda$-convex for every $s\in[0,1)$\,.
\end{lemma}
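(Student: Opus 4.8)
The plan is to exploit the splitting $\hat F^s = G^s_1 + J^s_1$ recorded in \eqref{reprs}--\eqref{repr0}, together with the elementary bound \eqref{estJsr}. The key point is that $G^s_1$ is a nonnegative quadratic functional, hence already convex, so the only possible source of non-convexity and of non-positivity is the lower order term $J^s_1$, which is however controlled by $|\Omega|\,\|\cdot\|^2_{L^2(\Omega)}$; adding $\tfrac{\lambda}{2}\|\cdot\|^2_{L^2(\Omega)}$ with $\lambda\ge 2|\Omega|$ absorbs it.

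First I would check that $G^s_1\colon L^2(\Omega)\to[0,+\infty]$ is convex. This is immediate: for $u,v\in L^2(\Omega)$, $t\in[0,1]$ and $w:=tu+(1-t)v$ one has $\tilde w(x)-\tilde w(y)=t(\tilde u(x)-\tilde u(y))+(1-t)(\tilde v(x)-\tilde v(y))$, whence by convexity of $z\mapsto|z|^2$ the pointwise inequality $|\tilde w(x)-\tilde w(y)|^2\le t|\tilde u(x)-\tilde u(y)|^2+(1-t)|\tilde v(x)-\tilde v(y)|^2$; integrating against the positive kernel $\chi_{\B_1}(x,y)\,|x-y|^{-(d+2s)}$ gives $G^s_1(w)\le t\,G^s_1(u)+(1-t)\,G^s_1(v)$.

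Next I would treat $Q^s(u):=J^s_1(u)+\tfrac{\lambda}{2}\|u\|^2_{L^2(\Omega)}$. Since the kernel on $\R^{2d}\setminus\overline\B_1$ is symmetric in $x,y$, the functional $J^s_1$ is the quadratic form of a symmetric bilinear form, hence so is $Q^s$, say $Q^s(u)=A^s(u,u)$ with $A^s$ symmetric bilinear. For any such quadratic form one has the identity $t\,q(u)+(1-t)\,q(v)-q(tu+(1-t)v)=t(1-t)\,A^s(u-v,u-v)$, so $q$ is convex precisely when $A^s$ is positive semidefinite, i.e.\ when $Q^s(w)\ge 0$ for every $w$. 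By \eqref{estJsr}, $Q^s(w)=J^s_1(w)+\tfrac{\lambda}{2}\|w\|^2_{L^2(\Omega)}\ge -|\Omega|\,\|w\|^2_{L^2(\Omega)}+\tfrac{\lambda}{2}\|w\|^2_{L^2(\Omega)}=\big(\tfrac{\lambda}{2}-|\Omega|\big)\|w\|^2_{L^2(\Omega)}\ge 0$ for $\lambda\ge 2|\Omega|$, so $Q^s$ is convex. Adding the convex functional $G^s_1$ — which is legitimate since $Q^s$ is finite-valued on $L^2(\Omega)$ by \eqref{estJsr}, so no indeterminate form $+\infty-\infty$ occurs — shows that $\hat F^s(\cdot)+\tfrac{\lambda}{2}\|\cdot\|^2_{L^2(\Omega)}=G^s_1(\cdot)+Q^s(\cdot)$ is convex, i.e.\ $\hat F^s$ is $\lambda$-convex. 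Since moreover $G^s_1\ge 0$, the same chain of inequalities yields $\hat F^s(u)+\tfrac{\lambda}{2}\|u\|^2_{L^2(\Omega)}\ge Q^s(u)\ge 0$, i.e.\ $\hat F^s$ is $\lambda$-positive.

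There is no genuine obstacle here; the only points requiring (minor) care are: the bookkeeping for $s=0$, where $G^0_1(u)$ may be $+\infty$ (both assertions are then trivially true at such $u$); and applying additivity of convexity with the finite-valued summand $Q^s$, as noted above. Note that the argument is uniform: it works verbatim for every $s\in[0,1)$ with one and the same $\lambda>2|\Omega|$, which is exactly the uniformity in $s$ needed to later invoke Theorem~\ref{genstab}.
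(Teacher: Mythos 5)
Your proof is correct and follows essentially the same route as the paper: the splitting $\hat F^s=G^s_1+J^s_1$, convexity of the nonnegative quadratic functional $G^s_1$, and absorption of $J^s_1$ via \eqref{estJsr} into $\tfrac{\lambda}{2}\|\cdot\|^2_{L^2(\Omega)}$. The paper verifies convexity of $J^s_1+\tfrac{\lambda}{2}\|\cdot\|^2_{L^2(\Omega)}$ by computing the second derivative along lines, which is just the one-dimensional version of your positive-semidefiniteness criterion for the associated symmetric bilinear form, so the two arguments coincide in substance.
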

 %%%%%%%%%%%%%%%%%%%%%%%%%%%%%%%%%%%%%%%%
 %%%%%%%%%%%%%%%%%%%%%%%%%%%%%%%%%%%%%%%%
 %%%%%%%%%%%%%%%%%%%%%%%%%%%%%%%%%%%%%%%% 
\begin{proof}
As for the $\lambda$-positivity it is enough to notice that, by the very definition of $\hat F^s$ in \eqref{reprs} and \eqref{repr0} and by \eqref{estJsr}, recalling that $G^s_1\ge 0$ for every $s\in[0,1)$\,, we have that
	\begin{equation*}
	\hat{F}^s(u)+ \frac{\lambda}{2}\| u \|_{L^2(\Omega)}^2
	\geq G_1^s(u)+ \left (\frac{\lambda}{2}-\vert \Omega \vert \right ) \|u \|_{L^2(\Omega)}^2\geq 0\,.
	\end{equation*}
	Now we show that the functionals $\hat F^s$ are $\lambda$-convex for every $s\in[0,1)$\,.
	We preliminarily notice that the functionals $G^s_1$ are convex for every $s\in[0,1)$\,. Therefore, it is enough to show that the functionals $J^s_1$ are $\lambda$-convex. To this end, for every 	 $ u, v\in \mathcal{H}_0^0(\Omega)$ we define the function
	$$
	f: \R \rightarrow \R,\qquad f(t):=  J_1^s(u+t v)+\frac{\lambda}{2}\|u+t v \|_{L^2(\Omega)}^2
	$$
	and	we claim that $\frac{\ud^2}{\ud t^2}f(t)\geq 0$ for every $t \in \R$\,.
	Indeed, since
	\begin{equation*}%\label{equicovexform1}
	J_1^s(u+t v)= J_1^s(u)-2t \iint_{\R^{2d}\setminus\overline{\mathcal{B}}_1}\frac{\tilde{u}(x)\tilde{v}(y)}{\vert x-y \vert^{d+2s}}\ud x\ud y+t^2J_1^s(v)
	\end{equation*}
	and 
	\begin{equation*}%\label{equicovexform2}
 \|u+ t v\|_{L^2(\Omega)}^2= \|u \|_{L^2(\Omega)}^2+2t\int_{\Omega} u(x)v(x)\ud x+ t^2\| v \|_{L^2(\Omega)}^2\,,
	\end{equation*}
	by \eqref{estJsr} we have
	$$\frac{\ud^2}{\ud t^2}f(t)= 2 J_1^s(v)+\lambda  \|  v  \|_{L^2(\Omega)}^2 \geq (-2 \vert \Omega \vert+ \lambda)\| v \|_{L^2(\Omega)}^2 \geq 0\,,  $$
	which implies the $\lambda$-convexity of the functional $J^s_1$ and then the $\lambda$-convexity of $\hat F^s$\,. 
\end{proof}
 %%%%%%%%%%%%%%%%%%%%%%%%%%%%%%%%%%%%%%%%
 %%%%%%%%%%%%%%%%%%%%%%%%%%%%%%%%%%%%%%%%
 %%%%%%%%%%%%%%%%%%%%%%%%%%%%%%%%%%%%%%%%   
\begin{proof}[Proof of Theorem \ref{convheat1ordsto0}]
Let $\lambda>2|\Omega|$ be fixed. Then, by the very definition of $\hat{F}^s$ in \eqref{reprs} for every $n\in\N$ we have that $D(\hat{F}^{s_n})=\mathcal{H}^{s_n}_0(\Omega)\neq\emptyset$ and, by Remark \ref{DCT} and Lemma \ref{lambdapc}, that the functionals $\hat{F}^{s_n}$ are strongly lower semicontinuous, $\lambda$-positive and $\lambda$-convex. 
Moreover, by \eqref{eq:firstvar} and by \eqref{fvf0}, for every $u\in\mathcal{H}^{s_n}_0(\Omega)$ and for every $\ffi\in\CC_\cc^\infty(\Omega)$ we have
\begin{equation*}
\lim_{t\to 0}\frac{\hat{F}^{s_n}(u+t\ffi)-\hat{F}^{s_n}(u)}{t}=\langle (-\Delta)^{s_n}u- \frac{d\omega_d}{s_n}u,\ffi\rangle_{L^2(\Omega)}\,,
\end{equation*}
which, by applying Proposition \ref{singleton} with $\F=\hat{F}^{s_n}$\,, $\h=L^2(\Omega)$ and $\hat\h=\CC_\cc^{\infty}(\Omega)$\,, implies that for every $u\in\mathcal{H}^{s_n}(\Omega)$ either $\partial\hat{F}^{s_n}(u)=\emptyset$ or $\partial\hat{F}^{s_n}(u)=\{(-\Delta)^{s_n}u-\frac{d\omega_d}{s_n}u\}$ with $(-\Delta)^{s_n}u-\frac{d\omega_d}{s_n}u\in L^2(\Omega)$\,.
Analogously, by Lemma \ref{lemma:firstvar0} and by Proposition \ref{singleton}, we have that for every $u\in\mathcal{H}^0_0(\Omega)$ either $\partial\hat{F}^{0}(u)=\emptyset$ or $\partial\hat{F}^{0}(u)=\{(-\Delta)^{0}u\}$ with $(-\Delta)^{0}u\in L^2(\Omega)$\,.

Therefore, by Theorem \ref{exist}, the solutions to the problems \eqref{cauchyord1n} ($n\in\N$) and \eqref{cauchyord1infty} are uniquely determined, and the righthand sides in \eqref{cauchyord1n} and \eqref{cauchyord1infty} belong to $L^2$ for $a.e. \, t$.
Finally, the stability claim follows by applying Theorem \ref{genstab} with $\F^n=\hat{F}^{s_n}$ and $\F^\infty=\hat{F}^0$\,, once noticed that, in view of Theorem \ref{thm:stozerofo}, assumption (b) is satisfied.
\end{proof}
%%%%%%%%%%%%%%%%%%%%%%%%%%%%%%
%%%%%%%%%%%%%%%%%%%%%%%%%%%%%%
%%%%%%%%%%%%%%%%%%%%%%%%%%%%%%
 It lasts to prove Theorem~\ref{11genstab}. Also in this case, this follows from the general results already discussed. 
\begin{proof}[Proof of Theorem~\ref{11genstab}.]
	By the very definition of $F^s$ in \eqref{defFs}, we have that for all $n \in \mathbb{N}$  $D((1-s_n)F^{s_n})= \mathcal{H}_{0}^{s_n}(\Omega)\neq \emptyset$ and that the functional $(1-s_n)F^{s_n}$  is  strongly lower semicontinuous, $\lambda$-positive and $\lambda$-convex for every $\lambda>0$. Now, by  combining  Proposition~\ref{lemma:firstvar}  with  Proposition~\ref{singleton}  for  $\mathcal{F}= (1-s_n)F^{s_n}$, $\h= L^2(\Omega)$ and $\hat{\h}=\CC_{\cc}^{\infty}(\Omega)$, we have that for every $u \in \mathcal{H}_0^{s_n}(\Omega)$, either $ (1-s_n)\partial F^{s_n}(u)= \emptyset$ or $ (1-s_n)\partial F^{s_n}(u)= \{ (1-s_n)  (- \Delta)^{s_n}u \}$ with $(1-s_n) (-\Delta)^{s_n}u \in L^2(\Omega)$.
Furthermore, for every $u \in H_{0}^1(\Omega)$ and for all $\varphi \in \CC_{\cc}^{\infty}(\Omega)$ %by the formula \eqref{firstvarF^1} 
	we have that
	\begin{equation*}
	\lim_{h \rightarrow 0} \frac{F^1(u+h\varphi)-F^1(u)}{h}=\frac{\omega_{d}}{2}\langle \nabla u,\nabla\ffi\rangle_{L^2(\Omega)}=: \frac{\omega_{d}}{2}\langle (-\Delta)u,\ffi\rangle_{L^2(\Omega)}\,;
	\end{equation*}
	therefore, by applying Proposition \ref{singleton} with $\mathcal{F}=F^1$, $ \h= L^2(\Omega)$ and $\hat{\h}=\CC_\cc^{\infty}(\Omega)$ we have that either $ \partial F^1(u)= \emptyset$ or $\partial F^1(u)= \{ (-\Delta u)\}$ with $(-\Delta
	u) \in L^2(\Omega)$. Now, by Theorem \ref{exist}, the solutions to the problems \eqref{11caun} ($n\in\N$) and \eqref{11cauninf} are uniquely determined, and the righthand sides in \eqref{11caun} and \eqref{11cauninf} belong to $L^2$ for $a.e. \, t$. Finally, the stability claim follows by applying Theorem~\ref{genstab} with $\mathcal{F}^n=(1-s_n)F^{s_n}$ and $\mathcal{F}^{\infty}= F^1$, once noticed that, in view of  Theorem~\ref{thmGammaconvs1},  assumption (b) is satisfied.
\end{proof}
%%%%%%%%%%%%%%%%%%%%%%%%%%%%%%%%%%%%%%%%
%%%%%%%%%%%%%%%%%%%%%%%%%%%%%%%%%%%%%%%%
%%%%%%%%%%%%%%%%%%%%%%%%%%%%%%%%%%%%%%%%
%%%%%%%%%%%%%%%%%%%%%%%%%%%%%%%%%%%%%%%%       

\end{document}